\documentclass[final]{siamltex1213}

\listfiles
\usepackage{xcolor,colortbl}

\hyphenation{Lip-schitz}
%
\usepackage{tikz}
\usetikzlibrary{fit}\usetikzlibrary{arrows,shapes,positioning}
\usetikzlibrary{decorations.markings}
\tikzstyle arrowstyle=[scale=1]
\tikzstyle directed=[postaction={decorate,decoration={markings,
    mark=at position .5 with {\arrow[arrowstyle]{stealth}}}}]
\tikzstyle reverse directed=[postaction={decorate,decoration={markings,
    mark=at position .5 with {\arrowreversed[arrowstyle]{stealth};}}}]

\newtheorem{assump}{Assumption}
\newtheorem{remark}{Remark}

\usepackage{mathtools}
\usepackage[normalem]{ulem} 



\newcommand{\vN}{{\mathbf{N}}}

\newcommand{\vR}{{\mathbf{R}}}

\newcommand{\cH}{{\mathcal{H}}}


\newcommand{\dom}{{\mathrm{dom}}} 
\newcommand{\range}{{\mathrm{range}}} 

\newcommand{\prox}{\mathbf{prox}}

\newcommand{\tnabla}{\widetilde{\nabla}}

\newcommand{\TFDRS}{T_{\mathrm{FDRS}}}

\newcommand{\alphaFDRS}{\alpha_{\mathrm{FDRS}}}
\newcommand{\alphaFDRSV}{\alpha_{\mathrm{FDRS}}^V}

\DeclareMathOperator*{\argmin}{arg\,min}

\DeclareMathOperator*{\Min}{minimize}

\DeclareMathOperator*{\zer}{zer}

\newcommand\restr[2]{{
  \left.\kern-\nulldelimiterspace 
  #1 
  \vphantom{\big|} 
  \right|_{#2} 
  }}

\newcommand{\bc}{\begin{center}}
\newcommand{\ec}{\end{center}}

\newcommand{\bdm}{\begin{displaymath}}
\newcommand{\edm}{\end{displaymath}}

\newcommand{\beq}{\begin{equation}}
\newcommand{\eeq}{\end{equation}}

\newcommand{\bfl}{\begin{flushleft}}
\newcommand{\efl}{\end{flushleft}}

\newcommand{\bt}{\begin{tabbing}}
\newcommand{\et}{\end{tabbing}}

\newcommand{\beqn}{\begin{align}}
\newcommand{\eeqn}{\end{align}}

\newcommand{\beqs}{\begin{align*}} 
\newcommand{\eeqs}{\end{align*}}  

\usepackage[lined,boxed,commentsnumbered, ruled,vlined]{algorithm2e}

\newcommand\numberthis{\addtocounter{equation}{1}\tag{\theequation}}

\DeclarePairedDelimiter{\dotp}{\langle}{\rangle}

\newcommand{\refl}{\mathbf{refl}}

\usepackage{booktabs}

\def\cut#1{{}}

\title{Convergence rate analysis of the forward-Douglas-Rachford splitting scheme\thanks{This work is partially supported by grants NSF DGE-0707424 (graduate research fellowship program) and NSF DMS-1317602. 
}}

\author{Damek Davis\thanks{Department of Mathematics, University of California, Los Angeles
              Los Angeles, CA 90025, USA
              \email{damek@math.ucla.edu}}}


\begin{document}
\maketitle
\slugger{siopt}{xxxx}{xx}{x}{x--x}

\begin{abstract}
Operator splitting schemes are a class of powerful algorithms that solve complicated monotone inclusion and convex optimization problems that are built from many simpler pieces. They give rise to algorithms in which all simple pieces of the decomposition are processed individually. This leads to easily implementable and highly parallelizable or distributed algorithms, which often obtain nearly state-of-the-art performance.

In this paper, we analyze the convergence rate of the forward-Douglas-Rachford splitting (FDRS) algorithm, which is a generalization of the forward-backward splitting (FBS) and Douglas-Rachford splitting (DRS) algorithms. Under general convexity assumptions, we derive the ergodic and nonergodic convergence rates of the FDRS algorithm, and show that these rates are the best possible. Under Lipschitz differentiability assumptions, we show that the best iterate of FDRS converges as quickly as the last iterate of the FBS algorithm. Under strong convexity assumptions, we derive convergence rates for a sequence that strongly converges to a minimizer. Under strong convexity and Lipschitz differentiability assumptions, we show that FDRS converges linearly. We also provide examples where the objective is strongly convex, yet FDRS converges arbitrarily slowly. Finally, we relate the FDRS algorithm to a primal-dual forward-backward splitting scheme and clarify its place among existing splitting methods. Our results show that the FDRS algorithm automatically adapts to the regularity of the objective functions and achieves rates that improve upon the sharp worst case rates that hold in the absence of smoothness and strong convexity.
\end{abstract}

\begin{keywords}
forward-Douglas-Rachford splitting, Douglas-Rachford splitting, forward-backward splitting, generalized forward-backward splitting, fixed-point algorithm, primal-dual algorithm 
\end{keywords}

\begin{AMS}
47H05, 65K05, 65K15, 90C25
 \end{AMS}

\pagestyle{myheadings}
\thispagestyle{plain}
\markboth{D. Davis}{Convergence rates for forward-Douglas-Rachford}

\section{Introduction}\label{sec:intro}
Operator-splitting schemes are algorithms for splitting complicated problems arising in PDE, monotone inclusions, optimization, and control into many simpler subproblems.  The achieved decomposition can give rise to inherently parallel and, in some cases, distributed algorithms. These characteristics are particularly desirable for large-scale problems that arise in machine learning, finance, control, image processing, and PDE~\cite{boyd2011distributed}.

In optimization, the Douglas-Rachford splitting (DRS) algorithm~\cite{lions1979splitting} minimizes sums of (possibly) nonsmooth functions $f, g : \cH \rightarrow (-\infty, \infty]$ on a Hilbert space $\cH$:
\begin{align}\label{eq:DRSprob}
\Min_{x \in \cH} f(x) + g(x).
\end{align}
During each step of the algorithm, DRS applies the proximal operator, which is the basic subproblem in nonsmooth minimization, to $f$ and $g$ individually rather than to the sum $f+g$. Thus, the key assumption in DRS is that $f$ and $g$ are easy to minimize \emph{independently}, but the sum $f+g$ is difficult to minimize. We note that many complex objectives arising in machine learning~\cite{boyd2011distributed} and signal processing~\cite{combettes2011proximal} are the sum of nonsmooth terms with simple or closed-form proximal operators. 

The forward-backward splitting (FBS) algorithm~\cite{passty1979ergodic} is another technique for solving~\eqref{eq:DRSprob} when $g$ is known to be \emph{smooth}.  In this case, the proximal operator of $g$ is never evaluated. Instead, FBS combines gradient (forward) steps with respect to $g$ and proximal (backward) steps with respect to $f$.  FBS is especially useful when the proximal operator of $g$ is complex and its gradient is simple to compute.

Recently, the forward-Douglas-Rachford splitting (FDRS) algorithm~\cite{briceno2012forward} was proposed to combine DRS and FBS and extend their applicability (see Algorithm~\ref{alg:FDRS}). More specifically, let $V \subseteq \cH$ be a \emph{closed vector space} and suppose $g$ is smooth. Then FDRS applies to the following constrained problem:
\begin{align}\label{eq:FDRSprob}
\Min_{x \in V} f(x) + g(x).
\end{align}
Throughout the course of the algorithm, the proximal operator of $f$, the gradient of $g$, and the projection operator onto $V$ are all employed separately. 

The FDRS algorithm can also apply to affinely constrained problems. Indeed, if $V = V_0 + b$ for a closed vector subspace $V_0 \subseteq \cH$ and a vector $b \in \cH$, then Problem~\eqref{eq:FDRSprob} can be reformulated as
\begin{align}\label{eq:FDRSprobaffine}
\Min_{x \in V_0} f(x+b) + g(x+b).
\end{align}
For simplicity, we only consider linearly constrained problems.

The FDRS algorithm is a generalization of the generalized forward-backward splitting  (GFBS) algorithm~\cite{raguet2013generalized}, which solves the problem $\Min_{x \in \cH} \sum_{i=1}^n f_i(x) + g(x)$ where $f_i : \cH \rightarrow (-\infty, \infty]$ are closed, proper, convex and (possibly) nonsmooth. In the GFBS algorithm, the proximal mapping of each function $f_i$ is evaluated \emph{in parallel}. We note that GFBS can be derived as an application of FDRS to the equivalent problem:
\begin{align}\label{eq:GFBSinFDRS}
\min_{\substack{(x_1, x_2, \ldots, x_n) \in \cH^n \\ x_1=x_2= \cdots= x_n}} \sum_{i=1}^n f_i(x_i) + g \left( \frac{1}{n}\sum_{i=1}^n x_i\right).
\end{align}
In this case, the vector space $V= \{(x, \ldots, x) \in \cH^n \mid x \in \cH\}$ is the diagonal set of $\cH^n$ and the function $f$ is separable in the components of $(x_1, \cdots, x_n)$\cut{: $f(x_1, \ldots, x_n) = \sum_{i=1}^n f_i(x_i)$}.

The FDRS algorithm is the only primal operator-splitting method capable of using all structure in Equation~\eqref{eq:FDRSprob}. 
In order to achieve good practical performance, the other primal splitting methods require stringent assumptions on $f, g,$ and $V$. Primal DRS cannot use the smooth structure of $g$, so the proximal operator of $g$ must be simple. On the other hand, primal FBS and forward-backward-forward splitting (FBFS)~\cite{tseng2000modified} cannot separate the coupled nonsmooth structure of $f$ and $V$, so minimizing $f(x)$ subject to $x \in V$ must be simple. In contrast, FDRS  achieves good practical performance if it is simple to minimize $f$, evaluate $\nabla g$, and project onto $V$.

Modern primal-dual splitting methods~\cite{chambolle2011first,esser2010general,condat2013primal,vu2013splitting,briceno2011monotone+,komodakis2014playing} can also decompose problem~\eqref{eq:FDRSprob}, but they introduce extra variables and are, thus, less memory efficient. It is unclear whether FDRS will perform better than primal-dual methods when memory is not a concern. However, it is easier to choose algorithm parameters for FDRS and, hence, it can be more convenient to use in practice. 

\subsection*{Application: constrained quadratic programming and support vector machines} Let $d$ and $m$ be natural numbers. Suppose that $Q \in \vR^{d \times d}$ is a symmetric positive semi-definite matrix, $c \in \vR^d$ is a vector, $C \subseteq \vR^d$ is a constraint set,  $A \in \vR^{m \times d}$ is a linear map, and $b \in \vR^{m}$ is a vector.  Consider the  problem:
\begin{align*}
\Min_{x\in \vR^d} & \; \frac{1}{2}\dotp{ Qx, x} + \dotp{ c, x} \numberthis \label{eq:cqprogramming}\\
\text{subject to:} &\;  x \in C \\
& \; Ax = b.
\end{align*}
Problem~\eqref{eq:cqprogramming} arises in the dual form soft-margin kernelized support vector machine classifier~\cite{cortes1995support} in which $C$ is a box constraint, $b$ is $0$, and $A$ has rank one. Note that by the argument in~\eqref{eq:FDRSprobaffine}, we can always assume that $b = 0$. 

Define the smooth function $g(x) := (1/2)\dotp{ Qx, x} + \dotp{ c, x}$, the indicator function $f(x) := \chi_{C}(x)$ (which is $0$ on $C$ and $\infty$ elsewhere), and the vector space $V := \{x \in \vR^d \mid Ax = 0\}$. With this notation,~\eqref{eq:cqprogramming} is in the form~\eqref{eq:FDRSprob} and, thus, FDRS can be applied. This splitting is nice because $\nabla g(x) = Qx + c$ is simple whereas the proximal operator of $g$ requires a matrix inversion $\prox_{\gamma g} = (I_{\vR^d} + \gamma Q)^{-1}\circ (I_{\vR^d} - \gamma c),$ which is expensive for large-scale problems. 

\subsection{Goals, challenges, and approaches}

This work seeks to characterize the convergence rate of the FDRS algorithm applied to Problem~\eqref{eq:FDRSprob}.  Recently, \cite{davis2014convergence} has shown that the sharp convergence rate of the fixed-point residual (FPR) (see Equation~\eqref{eq:FPR}) of the FDRS algorithm is $o(1/(k+1))$ . To the best of our knowledge, nothing is else is known about the convergence rate of FDRS. Furthermore,   it is unclear how the FDRS algorithm relates to other algorithms. We seek to fill this gap. 

The techniques used in this paper are based on~\cite{davis2014convergencepd,davis2014convergence,davis2014convergencefast}. These techniques are quite different from those used in classical objective error convergence rate analysis. The classical techniques do not apply because the FDRS algorithm is driven by the fixed-point iteration of a nonexpansive operator, not by the minimization of a model function\cut{, such as in the FBS algorithm~\cite{beck2009fast}}.  Thus, we must explicitly use the properties of nonexpansive operators in order to derive convergence rates for the objective error.

We summarize our contributions  and techniques as follows:
\begin{romannum}
\item We analyze the objective error convergence rates (Theorems~\ref{thm:FDRSergodic} and~\ref{thm:FDRSnonergodic}) of the FDRS algorithm under general convexity assumptions. We show that FDRS is, in the worst case,
\emph{nearly as slow as the subgradient method} yet \emph{nearly as fast as the proximal point algorithm (PPA) in the ergodic sense}.  Our nonergodic rates are shown by relating the objective error to the FPR through a \emph{fundamental inequality}.  We also show that the derived rates are sharp through counterexamples (Remarks~\ref{rem:optimalergodic} and~\ref{rem:optimalnonergodic}).
\item We show that if $f$ or $g$ is strongly convex, then a natural sequence of points converges strongly to a minimizer. Furthermore, the
\emph{best iterate} converges with rate $o(1/(k + 1))$, the \emph{ergodic iterate} converges with rate $O(1/(k + 1))$, and the \emph{nonergodic iterate} converges with rate $o(1/\sqrt{k+1})$.  The results follow by showing that a certain sequence of squared norms is summable.  We also show that some of the derived rates are sharp by constructing a novel counterexample (Theorem~\ref{eq:nonlinearconvergence}).

\item We show that if $f$ is differentiable and $\nabla f$ is Lipschitz, then the \emph{best iterate} of the FDRS algorithm has objective error of order $o(1/(k+1))$ (Theorem~\ref{thm:lipschitzbest}). This rate is an improvement over the sharp $o(1/\sqrt{k+1})$ convergence rate for nonsmooth $f$. The result follows by showing that the objective error is summable.

\item We establish scenarios under which FDRS converges linearly (Theorem~\ref{thm:linearconvergence}) and show that linear convergence is impossible under other scenarios (Theorem~\ref{eq:nonlinearconvergence}). 

\item We show that even if $f$ and $g$ are strongly convex, the FDRS algorithm can converge \emph{arbitrarily slowly} (Theorem~\ref{thm:arbitrarilyslow}). 

\item We show that the FDRS algorithm is the limiting case of a recently developed primal-dual forward-backward splitting algorithm (Section~\ref{sec:PD}) and, thus, clarify how FDRS relates to existing algorithms. 
\end{romannum}

Our analysis builds on the techniques and results of~\cite{briceno2012forward,davis2014convergence,davis2014convergencefast}. The rest of this section contains a brief review of these results.

\subsection{Notation and facts}\label{sec:notation}

Most of the definitions and notation that we use in this paper are standard and can be found in~\cite{bauschke2011convex}. Throughout this paper, we use $\cH$ to denote (a possibly infinite dimensional) Hilbert space. In fixed-point iterations, $(\lambda_j)_{j \geq 0} \subset \vR_+$ will denote a sequence of relaxation parameters, and 
\begin{equation}\label{def:Lambda}\Lambda_k := \sum_{i=0}^k \lambda_i
\end{equation} is its $k$th partial sum.\cut{ To ease notational memory, the reader may assume that $\lambda_k \equiv 1$ and $\Lambda_k =(k+1)$.}\cut{ Given any sequence  $(x^j)_{j \geq 0}\subset \cH$, we let 
\begin{align}\label{eq:ergiterate}
\overline{x}^k = ({1}/{\Lambda_k})\sum_{i=0}^k \lambda_i x^i
\end{align}
denote its $k$th average  with respect to the sequence $(\lambda_j)_{j \geq 0}$.  We call $(\overline{x}^j)_{j \geq 0}$ the \emph{ergodic} sequence and we call $(x^j)_{j \geq 0}$ the \emph{nonergodic} sequence.}

For any subset $C \subseteq \cH$, we define the distance function:
\begin{align}\label{eq:distancefunction}
d_{C}(x) :=  \inf_{y \in C} \|x -y\|. 
\end{align}
In addition, we define the indicator function $\chi_{C} : \cH \rightarrow \{0, \infty\}$ of $C$: for all $x \in C$ and $y \in \cH \backslash C$, we have $\chi_{C}(x) = 0$ and $\chi_C(y) = \infty$. 

Given a closed, proper, and convex function $f : \cH \rightarrow (-\infty, \infty]$, the set $\partial f(x) = \{p \in \cH \mid \text{for all } y\in \cH, f(y) \geq f(x) + \dotp{y-x, p}\}$ denotes its subdifferential at $x$ and
\begin{align*}
\tnabla f(x) \in \partial f(x)
\end{align*}
denotes a subgradient. (This notation was used in \cite[Eq. (1.10)]{bertsekas2011incremental}.) If $f$ is G\^{a}teaux differentiable at $x \in \cH$, we have $\partial f(x) = \{\nabla f(x)\}$~\cite[Proposition 17.26]{bauschke2011convex}.

Let $I_{\cH}: \cH \rightarrow \cH$ be the identity map on $\cH$. For any $x \in \cH$ and $\gamma \in \vR_{++}$, we let 
\begin{align*}
\prox_{\gamma f}(x) := \argmin_{y \in \cH} \left(f(y) + \frac{1}{2\gamma} \|y - x\|^2\right) \quad \mathrm{and} \quad \refl_{\gamma f} := 2\prox_{\gamma f} - I_{\cH},
\end{align*}
which are known as the \emph{proximal} and \emph{reflection} operators, respectively. 

 The subdifferential of the indicator function $\chi_V$ where $V \subseteq \cH$ is a closed vector subspace is defined as follows: for all $x \in \cH$,
\begin{align}\label{eq:normalcone}
\partial \chi_V(x) = 
 \begin{cases} 
V^\perp & \text{if } x \in \cH;\\
\emptyset &\text{otherwise}
\end{cases}
\end{align}
where $V^\perp$ is the orthogonal complement of $V$. Evidently, if $P_V(\cdot) = \argmin_{y \in V} \|y  - \cdot \|^2$ is the projection onto $V$, then
\begin{align*}
\prox_{\gamma \chi_V} = P_V   && \mathrm{and} && \refl_{\gamma \chi_V} = 2P_V - I_{\cH} = P_V - P_{V^\perp},
\end{align*}
and these operators are independent of $\gamma$.

Let $\lambda > 0$, let $L \geq 0$, and let $T : \cH \rightarrow \cH$ be a map. The map $T$ is called  \emph{$L$-Lipschitz} continuous if $\|Tx - Ty\| \leq L\|x-y\|$ for all $x, y \in \cH$. The map $T$ is called \emph{nonexpansive} if it is $1$-Lipschitz.  We also use the notation:
\begin{align}\label{eq:averagednotation}
T_{\lambda} := (1-\lambda)I_{\cH} + \lambda T.
\end{align}
If $\lambda \in (0, 1)$ and $T$ is nonexpansive, then $T_{\lambda}$ is called \emph{$\lambda$-averaged}~\cite[Definition 4.23]{bauschke2011convex}.

We call the following identity the \emph{cosine rule}:
\begin{align*}
\|y-z\|^2+2\dotp{y-x,z-x}=\|y-x\|^2+\|z-x\|^2,\quad\forall x,y,z\in\cH \numberthis\label{eq:cosinerule}.
\end{align*}
Young's inequality is the following: for all $a, b \geq 0$ and $\varepsilon > 0$, we have 
\begin{align}\label{eq:young}
ab \leq  a^2/(2\varepsilon) + \varepsilon b^2/2.
\end{align}

\subsection{Assumptions}
~\\
\begin{assump}[Convexity]
$f$ and $g$ are closed, proper, and convex. 
\end{assump}

We also assume the existence of a particular solution to~\eqref{eq:FDRSprob}

\begin{assump}[Solution existence]
 $\zer(\partial f+ \nabla g + \partial \chi_V) \neq \emptyset$
\end{assump}

Finally we assume that $\nabla g$ is sufficiently nice.
\begin{assump}[Differentiability]\label{assump:lipschitz}
The function $g$ is differentiable,  $\nabla g$ is $(1/\beta)$-Lipschitz, and $P_V \circ \nabla g \circ  P_V$ is $(1/\beta_{V})$-Lipschitz.
\end{assump}

\subsection{The FDRS algorithm}

FDRS is summarized in Algorithm~\ref{alg:FDRS}.

\begin{algorithm}[H]
\SetKwInOut{Input}{input}\SetKwInOut{Output}{output}
\SetKwComment{Comment}{}{}
\BlankLine
\Input{$z^0 \in \cH, ~\gamma \in (0, \infty) , ~(\lambda_j)_{j \geq 0} \in (0, \infty)$}
\For{$k=0,~1,\ldots$}{
$z^{k+1} = (1-\lambda_k)z^k +   \lambda_k\left(\frac{1}{2}I_{\cH} + \frac{1}{2}\refl_{\gamma f} \circ \refl_{\chi_V}\right) \circ (I - \gamma P_V \circ \nabla g \circ P_V)(z^k) $\;}
\caption{{Relaxed Forward-Douglas-Rachford splitting (relaxed FDRS)}}
\label{alg:FDRS}
\end{algorithm}
For now, we do not specify the stepsize parameters. See section~\ref{sec:convergence} for choices that ensure convergence and, see Lemma~\ref{lem:FDRSidentities} and Figure~\ref{fig:FDRSsquare} for intuition. 

Evidently, Algorithm~\ref{alg:FDRS} has the form: for all $k \geq 0$, $z^{k+1} = (\TFDRS)_{\lambda_k}(z^k)$ where
\begin{align}\label{eq:TFDRS}
\TFDRS &:= \left(\frac{1}{2}I_{\cH} + \frac{1}{2}\refl_{\gamma f} \circ \refl_{  \chi_V}\right) \circ (I_\cH - \gamma P_V \circ \nabla g \circ P_V).
\end{align}
Because $\TFDRS$ is nonexpansive (Part~\ref{prop:basicprox:part:alphaFDRS} of Proposition~\ref{prop:basicprox}), it follows that the FDRS algorithm is a special case of the Krasnosel'ski\u{\i}-Mann (KM) iteration~\cite{krasnosel1955two,mann1953mean,combettes2004solving}.

By choosing particular $f, g$ and $V$, we recover several other splitting algorithms:
\begin{align*}
\text{DRS:} \; (g \equiv 0) \quad z^{k+1} &= (1-\lambda_k)z^k +   \lambda_k\left(\frac{1}{2}I_{\cH} + \frac{1}{2}\refl_{\gamma f} \circ \refl_{\chi_V}\right)(z^k);\\ 
\text{FBS:} \; (V = \cH) \quad z^{k+1} &=(1-\lambda_k)z^k +  \lambda_k \prox_{\gamma f} \circ (I_\cH - \gamma \nabla g)(z^k); \\
\text{FBS:} \; (f \equiv 0) \quad z^{k+1} &=(1-\lambda_k)z^k +  \lambda_k P_V \circ (z - \gamma P_V \circ \nabla g\circ P_V)(z^k).
\end{align*}
For general $f, g$ and $V$, the primal DRS and FBS algorithms are not capable splitting Problem~\eqref{eq:FDRSprob} in the same way as~\eqref{eq:TFDRS}. Indeed, the DRS algorithm cannot use the smooth structure of $g$, and the FBS algorithm requires the evaluation of $\prox_{\gamma (f + \chi_V)}(\cdot) = \argmin_{x \in V} \left(f(x) + (1/2\gamma)\|x - \cdot\|^2\right).$ The FDRS algorithm eliminates these difficult problems and replaces them with (possibly) more tractable ones.

\subsection{Proximal, averaged, and FDRS operators}
We briefly review some operator-theoretic properties.

\begin{proposition}\label{prop:basicprox}
Let $\lambda > 0$, let $\gamma > 0$, let $\alpha > 0$, and let $f : \cH \rightarrow (-\infty, \infty]$ be closed, proper, and convex.  
\begin{remunerate}
\item \label{prop:basicprox:part:optprox}{\em Optimality conditions of $\prox$:} Let $x \in \cH$. Then $x^+ = \prox_{\gamma f} (x)$ if, and only if, $\tnabla f(x^+) :=(1/\gamma)(x-x^+) \in \partial f(x^+).$
\item \label{prop:basicprox:part:optproj}{\em Optimality conditions of $\prox_{\chi_V}$:} Let $x \in \cH$. Then $x^+ = \prox_{\gamma \chi_V} (x)$ if, and only if, $\tnabla \chi_V(x^+) :=(1/\gamma)(x-x^+) \in \partial  \chi_V (x^+) .$
Also, $\gamma \tnabla \chi_V(x^+) = P_{V^\perp} x \in V^\perp$.
\item \label{prop:basicprox:part:contract}{\em Averaged operator contraction property:} A map $T : \cH \rightarrow \cH$ is $\alpha$-averaged (see~\eqref{eq:averagednotation}) if, and only if, for all $x, y \in \cH$,
\begin{align}\label{eq:avgdecrease}
\|Tx - Ty\|^2 \leq \|x - y\|^2 - \frac{1-\alpha}{\alpha} \|(I_{\cH}- T)x - (I_{\cH} - T)y\|^2.
\end{align}
\item {\em Composition of averaged operators:}\label{prop:basicprox:part:compositioncontract} Let $\alpha_1, \alpha_2 \in (0, 1)$. Suppose $T_1 : \cH \rightarrow \cH$ and $T_2 : \cH \rightarrow \cH$ are $\alpha_1$ and $\alpha_2$-averaged operators, respectively. Then for all $x, y \in \cH$, the map $T_1\circ T_2 : \cH \rightarrow \cH$ is averaged with parameter
\begin{align}\label{eq:averagedcompositioncoefficient}
\alpha_{1, 2} := \frac{\alpha_1 + \alpha_2 - 2\alpha_1\alpha_2}{1- \alpha_1\alpha_2} \in (0, 1)
\end{align}
\item \label{prop:basicprox:part:wider}{\em Wider relaxations:}  A map $T : \cH \rightarrow \cH$ is $\alpha$-averaged if, and only if, $T_{\lambda}$ (see~\eqref{eq:averagednotation}) is $\lambda \alpha$-averaged for all $\lambda \in (0, 1/\alpha)$.
\item \label{prop:basicprox:part:proxcontraction} {\em Proximal operators are $(1/2)$-averaged:} The operator $\prox_{\gamma f} :  \cH \rightarrow \cH$ is $(1/2)$-averaged and, hence, the operator $\refl_{\gamma f} = 2\prox_{\gamma f} - I_{\cH}$ is nonexpansive.
\item \label{prop:basicprox:part:alphaFDRS} {\em Averaged property of the FDRS operator:} Suppose that $\gamma \in (0, 2\beta)$. Then the operator $\TFDRS$ (see~\eqref{eq:TFDRS}) is $\alphaFDRS := 2\beta/(4\beta - \gamma)$ averaged.
\end{remunerate}
\end{proposition}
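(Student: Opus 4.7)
The plan is to prove only Part~(\ref{prop:basicprox:part:alphaFDRS}), since Parts~(\ref{prop:basicprox:part:optprox})--(\ref{prop:basicprox:part:proxcontraction}) are standard results available in~\cite{bauschke2011convex}. The approach is to factor $\TFDRS = T_1 \circ T_2$, where
\begin{equation*}
T_1 := \tfrac{1}{2}I_\cH + \tfrac{1}{2}\,\refl_{\gamma f}\circ \refl_{\chi_V},\qquad T_2 := I_\cH - \gamma\, P_V\circ \nabla g\circ P_V,
\end{equation*}
show that each factor is averaged with an appropriate constant, and then invoke the composition formula from Part~(\ref{prop:basicprox:part:compositioncontract}).

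For $T_1$, Part~(\ref{prop:basicprox:part:proxcontraction}) gives that $\refl_{\gamma f}$ and $\refl_{\chi_V}$ are nonexpansive, so their composition $R := \refl_{\gamma f}\circ \refl_{\chi_V}$ is nonexpansive. By the definition of averagedness (or by Part~(\ref{prop:basicprox:part:wider}) applied to the identity), $T_1 = \tfrac{1}{2}I_\cH + \tfrac{1}{2}R$ is $\tfrac{1}{2}$-averaged.

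The main work lies in showing that $T_2$ is $\gamma/(2\beta)$-averaged. The key intermediate claim is that $S := P_V\circ \nabla g\circ P_V$ is $\beta$-cocoercive, meaning $\langle Sx - Sy,\,x-y\rangle \geq \beta \|Sx - Sy\|^2$. Since $g$ is convex with $(1/\beta)$-Lipschitz gradient, the Baillon-Haddad theorem yields
\begin{equation*}
\langle \nabla g(u)- \nabla g(v),\, u-v\rangle \geq \beta \|\nabla g(u) - \nabla g(v)\|^2.
\end{equation*}
Applying this with $u = P_V x$, $v = P_V y$, and using $P_V^\ast = P_V$, $P_V^2 = P_V$, together with $\|P_V w\|\leq \|w\|$, one obtains
\begin{equation*}
\langle Sx - Sy,\, x-y\rangle = \langle \nabla g(P_V x) - \nabla g(P_V y),\, P_V x - P_V y\rangle \geq \beta \|\nabla g(P_V x) - \nabla g(P_V y)\|^2 \geq \beta \|Sx - Sy\|^2.
\end{equation*}
A standard calculation (expand $\|(I_\cH - \gamma S)x - (I_\cH - \gamma S)y\|^2$ and apply the cocoercivity bound) then shows that $T_2 = I_\cH - \gamma S$ is $\gamma/(2\beta)$-averaged whenever $\gamma \in (0, 2\beta)$.

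Finally, substituting $\alpha_1 = 1/2$ and $\alpha_2 = \gamma/(2\beta)$ into the composition formula from Part~(\ref{prop:basicprox:part:compositioncontract}) gives
\begin{equation*}
\alpha_{1,2} = \frac{\alpha_1 + \alpha_2 - 2\alpha_1\alpha_2}{1-\alpha_1\alpha_2} = \frac{1/2}{1 - \gamma/(4\beta)} = \frac{2\beta}{4\beta-\gamma} = \alphaFDRS,
\end{equation*}
as claimed. The only real subtlety is the cocoercivity of $S$: a priori, sandwiching $\nabla g$ between two projections could destroy cocoercivity, but self-adjointness and idempotence of $P_V$, combined with the nonexpansivity $\|P_V w\|\leq \|w\|$, make the chain of inequalities go through cleanly. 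Note that this argument only uses the Lipschitz constant $(1/\beta)$ of $\nabla g$ and not the sharper constant $(1/\beta_V)$ from Assumption~\ref{assump:lipschitz}; a tighter averaging constant could in principle be obtained by working with the restriction of $g$ to $V$, but the stated form suffices here.
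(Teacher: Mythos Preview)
Your proof is correct and follows essentially the same route as the paper: factor $\TFDRS = T_1\circ T_2$, show $T_1$ is $(1/2)$-averaged via Part~(\ref{prop:basicprox:part:proxcontraction}), show $T_2$ is $(\gamma/2\beta)$-averaged, and then apply Part~(\ref{prop:basicprox:part:compositioncontract}). The only differences are cosmetic: the paper cites \cite[Proposition~4.1(ii)]{briceno2012forward} for the averagedness of $T_2$ rather than spelling out the Baillon--Haddad/cocoercivity argument you give, and the paper attributes Part~(\ref{prop:basicprox:part:compositioncontract}) to \cite{combettes2014compositions} rather than \cite{bauschke2011convex} (the sharp constant $\alpha_{1,2}$ is due to Combettes--Yamada and does not appear in the 2011 Bauschke--Combettes book).
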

\begin{proof}
Parts~\ref{prop:basicprox:part:optprox},~\ref{prop:basicprox:part:optproj},~\ref{prop:basicprox:part:contract},~\ref{prop:basicprox:part:wider}, and~\ref{prop:basicprox:part:proxcontraction} can be found in~\cite{bauschke2011convex}. Part~\ref{prop:basicprox:part:compositioncontract} can be found in~\cite{combettes2014compositions}. Part~\ref{prop:basicprox:part:alphaFDRS} follows from two facts: The operator $(({1}/{2})I_{\cH} + (1/2)\refl_{\gamma f} \circ \refl_{\chi_V})$ is $(1/2)$-averaged  by Part~\ref{prop:basicprox:part:proxcontraction}, and $I - \gamma P_V \circ \nabla g \circ P_V$ is $(\gamma/2\beta)$-averaged by \cite[Proposition 4.1 (ii)]{briceno2012forward}. Thus, Part~\ref{prop:basicprox:part:compositioncontract} proves Part~\ref{prop:basicprox:part:alphaFDRS}.
\end{proof}

\begin{remark}
Later we require $(\lambda_j)_{j \geq 0} \subseteq (0, 1/\alphaFDRS)$ so we hope that $\alphaFDRS$ is small.\cut{Note that the coefficient in~\eqref{eq:averagedcompositioncoefficient} is new~\cite{combettes2014compositions} and improves on the previously known estimate in~\cite[Lemma 2.2]{combettes2004solving}. } Note that the expression for $\alphaFDRS$ is new and improves upon the previous constant: $\max\{{2}/{3}, {2\gamma}/({\gamma + 2\beta})\}.$ See also~\cite[Remark 2.7 (i)]{combettes2014compositions}.
\end{remark}

The proof of the following Proposition is essentially contained in~\cite[Theorem 2.4]{combettes2014compositions}.  We reproduce it in Appendix~\ref{app:prop:compogradientsum} in order to derive a bound. The reader should note the following inequality before reading the proof.
\begin{remark}
Let $\varepsilon \in (0, 1)$. Then it is easy to show that
\begin{align*}
\lambda \leq \frac{(1-\varepsilon)(1+\varepsilon \alpha_{1, 2})}{\alpha_{1, 2}} &\implies \lambda \leq 1/\alpha_{1, 2} - \varepsilon^2 \text{ and } \lambda - 1 \leq \frac{1 - \alpha_{1, 2} \lambda}{\alpha_{1, 2}\varepsilon}. \numberthis\label{eq:lambdaboundwithepsilon}
\end{align*}
\end{remark}
\begin{proposition}\label{prop:compogradientsum}
Let $\alpha_1, \alpha_2 \in (0, 1)$.  Suppose that $T_1 : \cH \rightarrow \cH$ and $T_2 : \cH \rightarrow \cH$ are $\alpha_1$ and $\alpha_2$-averaged operators, respectively, and that $z^\ast$ is a fixed-point of $T_1\circ T_2$. Define $\alpha_{1, 2} \in (0, 1)$ as in~\eqref{eq:averagedcompositioncoefficient}.  Let $z^0 \in \cH$, let $\varepsilon \in (0, 1)$, and consider a sequence $(\lambda_j)_{j \geq 0} \subseteq (0, {(1-\varepsilon)(1+\varepsilon \alpha_{1, 2})}/{\alpha_{1, 2}} )$. Let $(z^j)_{j \geq 0}$ be generated by the following iteration: for all $k \geq 0$, let  $z^{k+1} = (T_1\circ T_2)_{\lambda_k}(z^k).$ Then $$\sum_{i=0}^\infty \lambda_i\|(I_{\cH} - T_2)(z^i) - (I_{\cH} - T_2) (z^\ast)\|^2 \leq \frac{\alpha_{2}(1+1/\varepsilon)\|z^0 - z^\ast\|^2}{1-\alpha_{2}}.$$
\end{proposition}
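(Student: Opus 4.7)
My plan is to combine the averagedness inequalities for $T_1$ and $T_2$ applied one after another with the quadratic-mean identity for the relaxation, and then absorb the single positive term that arises when $\lambda_k>1$ by invoking the composite averagedness of $T:=T_1\circ T_2$ together with the two numerical estimates in~\eqref{eq:lambdaboundwithepsilon}.

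First I would set $w^k:=T_2(z^k)$, $w^\ast:=T_2(z^\ast)$, and $u^{k+1}:=T_1(w^k)$ (so that $T_1(w^\ast)=z^\ast$), and abbreviate $A_k:=(I_\cH-T_2)(z^k)-(I_\cH-T_2)(z^\ast)$ and $B_k:=(I_\cH-T_1)(w^k)-(I_\cH-T_1)(w^\ast)$. Two successive applications of Part~\ref{prop:basicprox:part:contract} of Proposition~\ref{prop:basicprox}, first to $T_2$ and then to $T_1$, give $\|u^{k+1}-z^\ast\|^2\le\|z^k-z^\ast\|^2-\frac{1-\alpha_2}{\alpha_2}\|A_k\|^2-\frac{1-\alpha_1}{\alpha_1}\|B_k\|^2$. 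A direct computation shows $A_k+B_k=z^k-u^{k+1}$. Plugging the bound into the identity
\begin{align*}
\|z^{k+1}-z^\ast\|^2 = (1-\lambda_k)\|z^k-z^\ast\|^2 + \lambda_k\|u^{k+1}-z^\ast\|^2 - \lambda_k(1-\lambda_k)\|A_k+B_k\|^2
\end{align*}
and discarding the (favorable) $\|B_k\|^2$ term yields
\begin{align*}
\lambda_k\,\frac{1-\alpha_2}{\alpha_2}\,\|A_k\|^2 \;\le\; \|z^k-z^\ast\|^2-\|z^{k+1}-z^\ast\|^2 + \lambda_k(\lambda_k-1)\|A_k+B_k\|^2.
\end{align*}

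Next I would absorb the $\lambda_k(\lambda_k-1)\|A_k+B_k\|^2$ term. It is nonpositive when $\lambda_k\le 1$ and can simply be dropped. When $\lambda_k>1$, Parts~\ref{prop:basicprox:part:compositioncontract} and~\ref{prop:basicprox:part:wider} of Proposition~\ref{prop:basicprox} imply that $T$ is $\alpha_{1,2}$-averaged and, by the bound $\lambda_k<1/\alpha_{1,2}$ from~\eqref{eq:lambdaboundwithepsilon}, that $T_{\lambda_k}$ is $\lambda_k\alpha_{1,2}$-averaged. Applying Part~\ref{prop:basicprox:part:contract} to $T_{\lambda_k}$ at $z^k$ and $z^\ast$, together with the elementary identity $(I_\cH-T_{\lambda_k})(\cdot)=\lambda_k(I_\cH-T)(\cdot)$, produces
\begin{align*}
\|A_k+B_k\|^2 \;\le\; \frac{\alpha_{1,2}}{\lambda_k(1-\lambda_k\alpha_{1,2})}\bigl(\|z^k-z^\ast\|^2-\|z^{k+1}-z^\ast\|^2\bigr).
\end{align*}
Combining this with $\lambda_k-1\le(1-\alpha_{1,2}\lambda_k)/(\alpha_{1,2}\varepsilon)$ from~\eqref{eq:lambdaboundwithepsilon} collapses the product to $\frac{1}{\varepsilon}(\|z^k-z^\ast\|^2-\|z^{k+1}-z^\ast\|^2)$, and the same upper bound holds trivially when $\lambda_k\le1$. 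Substituting back gives $\lambda_k\|A_k\|^2\le\frac{\alpha_2(1+1/\varepsilon)}{1-\alpha_2}(\|z^k-z^\ast\|^2-\|z^{k+1}-z^\ast\|^2)$, and summing over $k$ telescopes the right-hand side to yield the claim.

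The main delicate point is the over-relaxed regime $\lambda_k>1$: the quadratic-mean identity contributes a \emph{positive} term $\lambda_k(\lambda_k-1)\|A_k+B_k\|^2$ that is not controlled by the individual averagedness inequalities for $T_1$ and $T_2$ alone. The specific upper bound on $\lambda_k$ imposed by the hypothesis is engineered so that both estimates in~\eqref{eq:lambdaboundwithepsilon} hold simultaneously; the first keeps $T_{\lambda_k}$ averaged and the second matches the growth of $\lambda_k-1$ to $1-\alpha_{1,2}\lambda_k$, so that the positive term is absorbed with constant exactly $1/\varepsilon$.
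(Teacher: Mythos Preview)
Your proposal is correct and follows essentially the same approach as the paper: both arguments combine the two individual averagedness inequalities for $T_1,T_2$ with the quadratic-mean identity for the relaxation, then control the positive term $\lambda_k(\lambda_k-1)\|z^k-T(z^k)\|^2$ via the composite averagedness of $T=T_1\circ T_2$ together with the estimates in~\eqref{eq:lambdaboundwithepsilon}. The only organizational difference is that the paper first establishes the global bound $\sum_i \frac{\lambda_i(1-\alpha_{1,2}\lambda_i)}{\alpha_{1,2}}\|z^i-T(z^i)\|^2\le\|z^0-z^\ast\|^2$ and then invokes it after summing, whereas you bound $\|A_k+B_k\|^2$ pointwise by the telescoping difference $\|z^k-z^\ast\|^2-\|z^{k+1}-z^\ast\|^2$ and sum afterward; these are equivalent rearrangements of the same inequalities.
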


\subsection{Convergence properties of FDRS}\label{sec:convergence}

The paper \cite{briceno2012forward} assumed the stepsize constraint $\gamma \in (0, 2\beta)$ in order to guarantee convergence of Algorithm~\ref{alg:FDRS}. We now show that the parameter $\gamma$ can (possibly) be increased beyond $2\beta$, which can result in faster practical performance. The proof follows by constructing a new Lipschitz differentiable function $h$ so that the triple $(f, h, V)$ generates the same FDRS operator, $\TFDRS$, as $(f, g, V)$.  This result was not included in~\cite{briceno2012forward}.

\begin{lemma}
Define a function 
\begin{align}\label{eq:function}
h: = g \circ P_V.
\end{align}
Then the FDRS operator associated to $(f, g, V)$ is identical to the FDRS operator associated to $(f, h, V)$. Let $1/\beta_V$ be the Lipschitz constant of $\nabla h$. Then $\beta_V \geq \beta$. In addition, let $\gamma \in (0, 2\beta_V)$. Then $\TFDRS$ is $\alphaFDRS^V$-averaged where
\begin{align}\label{eq:alphaFDRSV}
\alphaFDRSV &:= \frac{2\beta_V}{4\beta_V - \gamma}.
\end{align}
\end{lemma}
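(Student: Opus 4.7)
The plan is to show that $\nabla h$ coincides with $P_V \circ \nabla g \circ P_V$, so that the FDRS operators built from $(f,g,V)$ and $(f,h,V)$ literally agree, and then to repackage the averagedness bound of Proposition~\ref{prop:basicprox}(\ref{prop:basicprox:part:alphaFDRS}) using the Lipschitz constant of $\nabla h$ instead of the (possibly larger) Lipschitz constant of $\nabla g$.

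First, because $P_V$ is linear, self-adjoint, and idempotent, the chain rule gives $\nabla h(x) = P_V \nabla g(P_V x)$ for all $x \in \cH$. Applying $P_V$ on both sides and on the argument and using $P_V^2 = P_V$, I get
\begin{align*}
P_V \circ \nabla h \circ P_V = P_V \circ (P_V \circ \nabla g \circ P_V) \circ P_V = P_V \circ \nabla g \circ P_V.
\end{align*}
Substituting this identity into the definition~\eqref{eq:TFDRS} of $\TFDRS$ shows that the FDRS operator built from $(f,h,V)$ is identical to the one built from $(f,g,V)$.

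Second, to obtain $\beta_V \geq \beta$, I use nonexpansiveness of $P_V$ twice:
\begin{align*}
\|\nabla h(x) - \nabla h(y)\|
= \|P_V(\nabla g(P_V x) - \nabla g(P_V y))\|
\leq \|\nabla g(P_V x) - \nabla g(P_V y)\|
\leq \tfrac{1}{\beta}\|P_V x - P_V y\|
\leq \tfrac{1}{\beta}\|x-y\|,
\end{align*}
so the Lipschitz constant $1/\beta_V$ of $\nabla h$ is at most $1/\beta$, equivalently $\beta_V \geq \beta$.

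Finally, since $h = g \circ P_V$ is a composition of a closed proper convex function with a continuous linear map, $h$ is itself closed, proper, and convex, and by the previous paragraph $\nabla h$ is $(1/\beta_V)$-Lipschitz. Applying Proposition~\ref{prop:basicprox}(\ref{prop:basicprox:part:alphaFDRS}) to the triple $(f, h, V)$ in place of $(f, g, V)$ shows that, for any $\gamma \in (0, 2\beta_V)$, the operator $\TFDRS$ associated to $(f,h,V)$ is $2\beta_V/(4\beta_V - \gamma)$-averaged; by the first step this is the same operator as the one associated to $(f,g,V)$, proving the claimed averagedness with constant $\alphaFDRSV$. The only mildly subtle point in the whole argument is spotting that $\nabla h = P_V \circ \nabla g \circ P_V$ together with idempotence of $P_V$ makes the ``sandwiched'' gradients equal, which is what lets us swap $g$ for $h$ without altering the iteration while sharpening the averagedness constant.
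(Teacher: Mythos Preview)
Your proof is correct and follows the same approach as the paper: identify $\nabla h = P_V\circ\nabla g\circ P_V$, use nonexpansiveness of $P_V$ to get $\beta_V\geq\beta$, and invoke Proposition~\ref{prop:basicprox}(\ref{prop:basicprox:part:alphaFDRS}) with $h$ in place of $g$. If anything, you are more explicit than the paper, which simply asserts that the operator equivalence ``follows from'' the proposition; you spell out the chain-rule computation and the idempotence step $P_V\circ\nabla h\circ P_V = P_V\circ\nabla g\circ P_V$ that actually justify it.
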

{\em Proof.}
The averaged property of $\TFDRS$ and the equivalence of FDRS operators follows from Part~\ref{prop:basicprox:part:alphaFDRS} of Proposition~\ref{prop:basicprox}. The bound $\beta_V \geq \beta$ follows because for all $x, y \in \cH$,
\begin{align*}
\|\nabla h(x) - \nabla h(y)\| &= \|P_V \circ g \circ P_V(x) - P_V \circ g\circ P_V(y)\| \leq  \|\nabla g \circ P_V(x) - \nabla g \circ P_V(y)\| \\
&\leq (1/\beta)\|P_V(x) - P_V(y)\| \leq (1/\beta)\|x-y\| \qquad \endproof
\end{align*}

There are cases where $\beta_V$ is significantly larger than $\beta$. For instance, in the quadratic programming example in~\eqref{eq:cqprogramming}, $\beta$ is the reciprocal of the Lipschitz constant of $Q$, which is the maximal eigenvalue $\lambda_{\mathrm{max}}(Q)$ of $Q$. On the other hand, the gradient $\nabla h = P_V \circ Q \circ P_V$ has rank at most $d- \rank(A)$. Thus, unless the eigenvectors of $Q$ with eigenvalue $\lambda_{\mathrm{max}}(Q)$ lie in the $(d-\rank(A))$-dimensional space $V$, the constant $\beta_V = 1/\lambda_{\mathrm{max}}(P_V \circ Q \circ P_V)$ is larger than $\beta = 1/\lambda_{\mathrm{max}}(Q)$. See Appendix~\ref{app:beta_vcompare} for experimental evidence.

Most of our results do not require that $(z^j)_{j \geq 0}$ converges. However, for completeness we include the following weak convergence result.
\begin{proposition}
Let $\gamma \in (0, 2\beta_V)$, let $(\lambda_j)_{j \geq 0} \subseteq (0, 1/\alphaFDRSV)$, and suppose that $\sum_{i=0}^\infty \lambda_i(1-\lambda_i\alphaFDRSV) = \infty.$ Then $(z^j)_{j \geq 0}$ (from Algorithm~\ref{alg:FDRS}) weakly converges to a fixed-point of $\TFDRS$.
\end{proposition}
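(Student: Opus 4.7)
The plan is to reduce the statement to the classical Krasnosel'ski\u{\i}-Mann weak convergence theorem for nonexpansive operators, using the averaged structure of $\TFDRS$ established in the preceding lemma. Recall that the KM theorem (see, e.g., \cite[Theorem 5.14]{bauschke2011convex}) guarantees: if $N : \cH \rightarrow \cH$ is nonexpansive with $\Fix(N) \neq \emptyset$, and $(\mu_j)_{j\geq 0} \subseteq [0,1]$ satisfies $\sum_i \mu_i(1-\mu_i) = \infty$, then the sequence $z^{k+1} = (1-\mu_k)z^k + \mu_k N(z^k)$ converges weakly to some element of $\Fix(N)$.

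First I would rewrite the FDRS iteration in a form suitable for the KM theorem. Since the lemma shows that $\TFDRS$ is $\alphaFDRSV$-averaged, by definition there exists a nonexpansive operator $N : \cH \rightarrow \cH$ with $\TFDRS = (1 - \alphaFDRSV) I_{\cH} + \alphaFDRSV N$, and $\Fix(\TFDRS) = \Fix(N)$. Substituting into the definition of $(\TFDRS)_{\lambda_k}$ and simplifying gives
\begin{align*}
z^{k+1} = (\TFDRS)_{\lambda_k}(z^k) = (1 - \mu_k) z^k + \mu_k N(z^k), \qquad \mu_k := \lambda_k \alphaFDRSV.
\end{align*}
The hypothesis $\lambda_k \in (0, 1/\alphaFDRSV)$ yields $\mu_k \in (0,1)$, and
\begin{align*}
\sum_{i=0}^{\infty} \mu_i(1-\mu_i) = \alphaFDRSV \sum_{i=0}^{\infty} \lambda_i (1 - \lambda_i \alphaFDRSV) = \infty,
\end{align*}
by the assumption on $(\lambda_j)_{j \geq 0}$.

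Next I would verify that $\Fix(\TFDRS) \neq \emptyset$. This is where Assumption~2 is invoked: the standard derivation of the FDRS operator (see, e.g., \cite{briceno2012forward}) yields the equivalence $\Fix(\TFDRS) \neq \emptyset \iff \zer(\partial f + \nabla g + \partial \chi_V) \neq \emptyset$, so Assumption~2 directly supplies the required fixed point. Applying the KM theorem to $N$ then gives $z^k \rightharpoonup z^\ast$ for some $z^\ast \in \Fix(N) = \Fix(\TFDRS)$, which is precisely the claim.

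The only mild obstacle is the passage between fixed points of $\TFDRS$ and zeros of the monotone sum, but this is routine from the construction of the operator in~\eqref{eq:TFDRS} and is implicit in~\cite{briceno2012forward}; no new ingredient is needed beyond the averaged property already certified in the preceding lemma.
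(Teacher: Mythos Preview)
Your argument is correct and is essentially the same as the paper's: the paper simply invokes \cite[Proposition~3.1]{briceno2012forward} with the averaged constant $\alphaFDRSV$, and that proposition is itself proved exactly by the reduction to the Krasnosel'ski\u{\i}--Mann theorem you carry out here. You have merely unpacked the cited result; the content is identical.
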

\begin{proof}
Apply \cite[Proposition 3.1]{briceno2012forward} with the new averaged parameter $\alphaFDRSV$.
\end{proof}

The following theorem recalls several results on convergence rates for the iteration of averaged operators~\cite{davis2014convergence}.  In addition, we show that $(\lambda_j\|\nabla h(z^j) - \nabla h(z^\ast)\|^2)_{j \geq 0}$ is a summable sequence~\cite{briceno2012forward} whenever $(\lambda_j)_{j \geq 0}$ is chosen properly. 
\begin{theorem}\label{thm:FDRSfacts}
Suppose that $(z^j)_{j \geq 0}$ is generated by Algorithm~\ref{alg:FDRS} with $\gamma \in (0, 2\beta_V)$ and $(\lambda_j)_{j \geq 0} \subseteq (0, 1/\alphaFDRSV)$, and let $z^\ast$ be a fixed-point of $\TFDRS$. Then
\begin{remunerate}
\item {\em Fej\'er monotonicity:} \label{thm:FDRSfacts:part:fejer} the sequence $(\|z^j - z^\ast\|^2)_{j \geq 0}$ is nonincreasing. In addition, for all $z\in \cH$ and $\lambda \in (0,1/\alphaFDRSV)$, we have $\|(\TFDRS)_{\lambda} z - z^\ast\| \leq \|z - z^\ast\|.$
\item {\em Summable fixed-point residual:}  \label{thm:FDRSfacts:part:sumFPR}The sum is finite:
\begin{align*}
\sum_{i=0}^\infty \frac{1-\lambda_i\alphaFDRSV}{\lambda_i\alphaFDRSV}\| z^{i+1} - z^i\|^2 \leq \|z^0 - z^\ast\|^2.
\end{align*}
\item {\em Convergence rates of fixed-point residual:} \label{thm:FDRSfacts:part:convergenceFPR}For all $k \geq 0$, let $\tau_k := (1-\lambda_k\alphaFDRSV)\lambda_k/\alphaFDRSV$.  Suppose that $\underline{\tau} := \inf_{j \geq 0} \tau_j > 0$.  Then for $\lambda > 0$ and $k \geq 0$,
\begin{align}\label{thm:FDRSfacts:part:convergenceFPR:eq}
\|(\TFDRS)_{\lambda}(z^k) - z^k\|^2 &\leq \frac{\lambda^2\|z^0 - z^\ast\|^2}{\underline{\tau} (k+1)} && \mathrm{and} && \|(\TFDRS)_{\lambda}(z^k)- z^k\|^2 = o\left(\frac{1}{k+1}\right).
\end{align}
\item {\em Gradient summability:} \label{thm:FDRSfacts:part:gradientsum} Let $\varepsilon \in (0, 1)$ and suppose that 
\begin{align}\label{eq:lambdainclusion}
(\lambda_j)_{j \geq 0} \subseteq \left(0, \frac{(1-\varepsilon)(1+\varepsilon \alphaFDRSV)}{\alphaFDRSV} \right).
\end{align}
Then the following gradient sum is finite:
\begin{align}\label{eq:gradientsum}
\sum_{i=0}^\infty \lambda_i \| \nabla h(z^i) - \nabla h (z^\ast)\|^2 &\leq \frac{(1+\varepsilon)}{\gamma\varepsilon(2\beta_V-\gamma)}\|z^0 - z^\ast\|^2.
\end{align}
\end{remunerate}
\end{theorem}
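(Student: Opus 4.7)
The first three parts will be handled by specializing the general averaged-operator calculus from Proposition~\ref{prop:basicprox} to $\TFDRS$, whereas Part~\ref{thm:FDRSfacts:part:gradientsum} is a direct application of Proposition~\ref{prop:compogradientsum}. The common starting point is the preceding lemma, which gives that $\TFDRS$ is $\alphaFDRSV$-averaged; by Part~\ref{prop:basicprox:part:wider} of Proposition~\ref{prop:basicprox}, the relaxed operator $(\TFDRS)_{\lambda_k}$ is then $\lambda_k\alphaFDRSV$-averaged and hence nonexpansive for $\lambda_k \in (0, 1/\alphaFDRSV)$.

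For Parts~\ref{thm:FDRSfacts:part:fejer} and~\ref{thm:FDRSfacts:part:sumFPR}, I would apply the averaged-operator inequality~\eqref{eq:avgdecrease} to $T = (\TFDRS)_{\lambda_k}$ with $y = z^\ast$, using $(\TFDRS)_{\lambda_k}(z^\ast) = z^\ast$, to obtain the one-step descent
\begin{align*}
\|z^{k+1} - z^\ast\|^2 \leq \|z^k - z^\ast\|^2 - \frac{1-\lambda_k\alphaFDRSV}{\lambda_k\alphaFDRSV}\|z^{k+1} - z^k\|^2.
\end{align*}
Fej\'er monotonicity and the auxiliary bound in Part~\ref{thm:FDRSfacts:part:fejer} are immediate (the latter by reading the same inequality with a single relaxation parameter $\lambda$), while telescoping over $k$ together with $\|z^k - z^\ast\|^2 \geq 0$ yields Part~\ref{thm:FDRSfacts:part:sumFPR}.

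For Part~\ref{thm:FDRSfacts:part:convergenceFPR}, I would combine the summability from Part~\ref{thm:FDRSfacts:part:sumFPR}, rewritten as $\sum_i \tau_i\|\TFDRS(z^i) - z^i\|^2 \leq \|z^0 - z^\ast\|^2$ via $\|z^{i+1} - z^i\| = \lambda_i\|\TFDRS(z^i) - z^i\|$, with the standard monotonicity $\|\TFDRS(z^{k+1}) - z^{k+1}\| \leq \|\TFDRS(z^k) - z^k\|$ of Krasnosel'ski\u{\i}--Mann iterates of averaged operators from~\cite{davis2014convergence}. The bound $(k+1)\underline{\tau}\|\TFDRS(z^k) - z^k\|^2 \leq \|z^0 - z^\ast\|^2$ then gives the $O(1/(k+1))$ rate after multiplying by $\lambda^2$; the $o(1/(k+1))$ refinement follows from the elementary fact that a non-increasing nonnegative sequence with finite sum decays at rate $o(1/k)$.

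For Part~\ref{thm:FDRSfacts:part:gradientsum}, I would split $\TFDRS = T_1 \circ T_2$ with $T_1 = \tfrac{1}{2}I_{\cH} + \tfrac{1}{2}\refl_{\gamma f} \circ \refl_{\chi_V}$ and $T_2 = I_{\cH} - \gamma P_V \circ \nabla g \circ P_V = I_{\cH} - \gamma \nabla h$. By Part~\ref{prop:basicprox:part:proxcontraction} of Proposition~\ref{prop:basicprox}, $T_1$ is $(1/2)$-averaged, and since $h$ is convex with $(1/\beta_V)$-Lipschitz gradient, the gradient-descent step $T_2$ is $(\gamma/(2\beta_V))$-averaged. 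Plugging $\alpha_1 = 1/2$ and $\alpha_2 = \gamma/(2\beta_V)$ into~\eqref{eq:averagedcompositioncoefficient} yields precisely $\alpha_{1,2} = \alphaFDRSV$, so the relaxation range~\eqref{eq:lambdainclusion} matches exactly the one required by Proposition~\ref{prop:compogradientsum}. Invoking that proposition and using $I_{\cH} - T_2 = \gamma\nabla h$ converts its left-hand side into $\gamma^2\sum_i\lambda_i\|\nabla h(z^i) - \nabla h(z^\ast)\|^2$; substituting $\alpha_2$ on the right-hand side and simplifying reproduces~\eqref{eq:gradientsum}. The only real obstacle throughout is bookkeeping: making the averaged parameters, relaxation ranges, and resulting constants line up exactly -- no new analytic ingredient is needed beyond the toolkit assembled in the preceding subsection.
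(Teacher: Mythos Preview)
Your proposal is correct and follows essentially the same approach as the paper: the paper dispatches Parts~\ref{thm:FDRSfacts:part:fejer}--\ref{thm:FDRSfacts:part:convergenceFPR} by a direct citation of \cite[Theorem~1]{davis2014convergence} applied to the $\alphaFDRSV$-averaged operator $\TFDRS$, and handles Part~\ref{thm:FDRSfacts:part:gradientsum} exactly as you do, via Proposition~\ref{prop:compogradientsum} with $T_1 = \tfrac{1}{2}I_\cH + \tfrac{1}{2}\refl_{\gamma f}\circ\refl_{\chi_V}$ (which is $(1/2)$-averaged) and $T_2 = I_\cH - \gamma\nabla h$ (which is $(\gamma/(2\beta_V))$-averaged by Baillon--Haddad). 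Your write-up simply unpacks the cited averaged-operator argument for Parts~\ref{thm:FDRSfacts:part:fejer}--\ref{thm:FDRSfacts:part:convergenceFPR} rather than citing it wholesale, but the substance is identical.
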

\begin{proof}
Parts~\ref{thm:FDRSfacts:part:fejer},~\ref{thm:FDRSfacts:part:sumFPR}, and~\ref{thm:FDRSfacts:part:convergenceFPR} are a direct consequence of \cite[Theorem 1]{davis2014convergence} applied to the $\alphaFDRSV$-averaged operator $\TFDRS$.   Part~\ref{thm:FDRSfacts:part:gradientsum} is a direct consequence of Proposition~\ref{prop:compogradientsum} applied to the $(1/2)$-averaged operator $T_1 := (({1}/{2})I_{\cH} + (1/2)\refl_{\gamma f} \circ \refl_{\chi_V})$ (see Part~\ref{prop:basicprox:part:proxcontraction} of Proposition~\ref{prop:basicprox}) and the $(\gamma/(2\beta_V))$-averaged operator $T_2 := I_\cH - \gamma \nabla h$ (from the Baillon-Haddad Theorem~\cite{baillon1977quelques} and \cite[Proposition 4.33]{bauschke2011convex}).
\end{proof}

We call the following term the fixed-point residual (FPR):
\begin{align}\label{eq:FPR}
\|\TFDRS z^k - z^k\|^2 = \frac{1}{\lambda_k^2}\|z^{k+1} - z^k\|^2
\end{align}

\begin{remark}
Note that the convergence rate proved for $\|\TFDRS z^k - z^k\|^2$ in~\eqref{thm:FDRSfacts:part:convergenceFPR:eq} is sharp for the $\TFDRS$ operator \cite[Section 6.1.1]{davis2014convergence}.
\end{remark}


\section{Subgradients and fundamental inequalities}

In this section, we prove several algebraic identities of the FDRS algorithm.  In addition, we prove a relationship between the FPR and the objective error (Propositions~\ref{prop:FDRSupper} and~\ref{prop:FDRSlower}).

In first-order optimization algorithms, we only have access to (sub)gradients and function values.  Consequently, the FPR is usually the squared norm of a linear combination of (sub)gradients of the objective functions.  For example, the gradient descent algorithm for a smooth function $f$ generates a sequence of iterates by using forward gradient steps: $z^{k+1} := z^k - \nabla f(z^k)$; the FPR is $\|z^{k+1} - z^k\|^2 = \|\nabla f(z^k)\|^2.$

In splitting algorithms, the FPR is more complex because the subgradients are generated via forward-gradient or proximal (backward) steps (see Part~\ref{prop:basicprox:part:optprox} of Proposition~\ref{prop:basicprox}) at different points.  Thus, unlike the gradient descent algorithm where the objective error $f(z^k) - f(x^\ast) \leq \dotp{z^k - x^\ast, \nabla f(x^k)}$ can be bounded with the subgradient inequality, splitting algorithms for two or more functions can only bound the objective error when some or all of the functions are evaluated at separate points --- unless a Lipschitz assumption is imposed. In order to use this Lipschitz assumption, we enforce consensus among the variables, which is why the FPR rate is useful. 

\subsection{A subgradient representation of FDRS}\label{sec:subgradientrep}

\begin{figure}[h!]
  \centering
    \begin{tikzpicture}[scale=2]
  
    \draw[directed, thick] (0, 0) -- (0, 2);
    \draw[directed, thick] (0, 2) -- (2.5, 2);
    \draw[directed, thick] (2.5, 2) -- (2.5, 0);
    \draw[directed, thick] (0, 0) -- (3.5, 0);
    \draw[thick] (2.5, 2) -- (3.75,2);
    \draw[thick] (3.75, 2) -- (3.75, 0);
    \draw[thick] (3.75, 0) -- (2.5, 0);
    \draw[directed, thick] (2.5, 2) -- (3.5, 0);
    
    \draw[fill] (0, 0) circle [radius=.040];
    \draw[fill] (2.5, 0) circle [radius=.040];
    \draw[fill] (0, 2) circle [radius=.040];
    \draw[fill] (2.5, 2) circle [radius=.040];
    \draw[fill] (3.5, 0) circle [radius=.040];    

    \node [below left] at (0, 0) {$z$};
    \node [below] at (2.5, 0) {$\TFDRS (z)$};
    \node [below] at (3.5, 0) {$(\TFDRS)_{\lambda} (z)$};

    \node [below left] at (0, 2) {$x_h$};
    \node [below left] at (2.5,2) {$x_f$};
    \node [left] at (-.05, 1) {$-\gamma \tnabla \chi_{V}(x_h)$};
    \node [left] at (2.45, 1) {$\gamma \tnabla \chi_{V}(x_h)$};
    \node [above] at (1.25, 2.05) {$-\gamma \left(\tnabla \chi_V(x_h) +  \nabla h(x_h) + \tnabla f(x_f)\right)$};
    \node[above] at (1.75, 0) {$\lambda(x_f - x_h)$};
    \end{tikzpicture}
    \caption{A single FDRS iteration, from $z$ to $(\TFDRS )_{\lambda}(z)$ (see Lemma~\ref{lem:FDRSidentities}). Both occurrences of $\tnabla \chi_V(x_h)$ represent the same subgradient $(1/\gamma)P_{V^\perp}z = (1/\gamma)(z - x_h) \in V^\perp$.}
    \label{fig:FDRSsquare}

\end{figure}
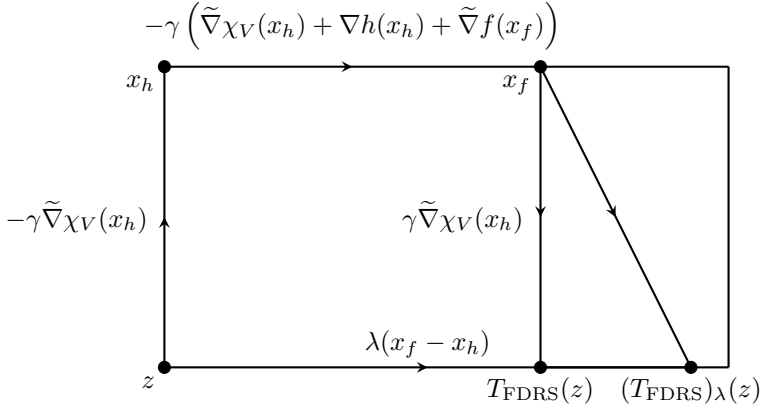

Figure~\ref{fig:FDRSsquare} pictures one iteration of Algorithm~\ref{alg:FDRS}: FDRS projects $z$ onto $V$ to get $x_h = z - \gamma \tnabla \chi_V(x_h)$. The reflection of $z$ across $V$ is $x_h - \gamma \tnabla \chi_{V}(x_h) = z - 2\gamma \tnabla \chi_V(x_h)$. Then FDRS takes a forward-gradient with respect to $\nabla h(x_h)$ from the reflected point $x_h - \gamma \tnabla \chi_V(x_h)$ and a proximal (backward) step with respect to $f$ to get $x_f$.  Finally, we move from $x_f$ to $\TFDRS z$ by traveling along the positive subgradient $\gamma \tnabla \chi_V(x_h)$. 

The following lemma is proved in Appendix~\ref{app:lem:FDRSidentities}.
\begin{lemma}[FDRS identities]\label{lem:FDRSidentities}
Let $z \in \cH$. Define points $x_h$ and $x_f$:
\begin{align}\label{eq:auxpoints}
x_h := P_V z && \mathrm{and} && x_f := \prox_{\gamma f}\circ \refl_{ \chi_V}\circ ( I_\cH - \gamma \nabla h)(z).
\end{align}
Then the identities hold
\begin{align}\label{eq:pointsidentity}
x_h = z - \gamma \tnabla \chi_{V}(x_h) && \mathrm{and} && x_f  = x_h - \gamma \left(\tnabla \chi_V(x_h) +  \nabla h(x_h) +  \tnabla f(x_f)\right)
\end{align}
where $\tnabla \chi_V(x_h) = (1/\gamma)P_{V^\perp}(z)$ and $\tnabla f(x_f)$ is uniquely defined by Part~\ref{prop:basicprox:part:optprox} of Proposition~\ref{prop:basicprox}.
In addition, each FDRS step has the following form:
\begin{align}\label{eq:FPRidentity}
(\TFDRS)_{\lambda}(z) - z = \lambda(x_f - x_h) = - \gamma \lambda \left(\tnabla \chi_V(x_h) +  \nabla h(x_h) +  \tnabla f(x_f)\right).
\end{align}
In particular, $\TFDRS(z) = x_f + \gamma\tnabla \chi_V(x_h)$.
\end{lemma}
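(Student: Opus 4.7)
The whole statement unwinds by careful bookkeeping from the definitions of $P_V$, $\refl_{\chi_V}$, $\prox_{\gamma f}$, and the averaged form of $\TFDRS$. The one structural observation that makes every term line up is that $h = g \circ P_V$ combined with idempotence of $P_V$ forces
\[
\nabla h(z) \;=\; P_V \circ \nabla g \circ P_V(z) \;=\; P_V \circ \nabla g \circ P_V(P_V z) \;=\; \nabla h(x_h),
\]
so the forward-gradient step with respect to $h$ can be read off either at $z$ or at $x_h$. This, plus the fact that $\nabla h(z) \in V$ (hence $P_V \nabla h(z) = \nabla h(z)$ and $P_{V^\perp} \nabla h(z) = 0$), is what allows the $\nabla h$ contributions on each side of $V$ to cancel in just the right way.

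\textbf{Step 1: the two identities in~\eqref{eq:pointsidentity}.} Since $x_h = P_V z = \prox_{\gamma \chi_V}(z)$, Part~\ref{prop:basicprox:part:optproj} of Proposition~\ref{prop:basicprox} yields $\tnabla \chi_V(x_h) = (1/\gamma)(z-x_h) = (1/\gamma)P_{V^\perp} z \in V^\perp$, which is the first identity. For the second, I compute $\refl_{\chi_V}(z-\gamma \nabla h(z))$ explicitly: using $\nabla h(z)\in V$ and $P_V z = x_h$, the projection step gives $P_V(z-\gamma\nabla h(z)) = x_h - \gamma \nabla h(x_h)$, so
\[
\refl_{\chi_V}(z-\gamma\nabla h(z)) \;=\; 2(x_h - \gamma \nabla h(x_h)) - (z - \gamma \nabla h(z)) \;=\; x_h - \gamma\,\tnabla\chi_V(x_h) - \gamma \nabla h(x_h),
\]
where the final equality uses $2x_h - z = x_h - \gamma\tnabla\chi_V(x_h)$ and $\nabla h(z)=\nabla h(x_h)$. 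Then $x_f = \prox_{\gamma f}$ of this point; applying Part~\ref{prop:basicprox:part:optprox} of Proposition~\ref{prop:basicprox} and rearranging gives exactly the claimed expression for $x_f$.

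\textbf{Step 2: the FPR identity~\eqref{eq:FPRidentity}.} From the definition of $\TFDRS$ and the above computation of the reflections,
\[
\TFDRS(z) \;=\; \tfrac{1}{2}(z-\gamma\nabla h(z)) + \tfrac{1}{2}\bigl(2x_f - \bigl(x_h - \gamma\tnabla\chi_V(x_h) - \gamma \nabla h(x_h)\bigr)\bigr).
\]
Using $\nabla h(z)=\nabla h(x_h)$ to cancel the $\nabla h$ terms and $z - x_h = \gamma\tnabla\chi_V(x_h)$ to collapse the remaining $z$-$x_h$ combination, this simplifies to $\TFDRS(z) = x_f + \gamma \tnabla\chi_V(x_h)$, which is the last claim. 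Consequently $\TFDRS(z) - z = x_f + \gamma\tnabla\chi_V(x_h) - z = x_f - x_h$, and applying the relaxation $(\TFDRS)_\lambda(z) = (1-\lambda)z + \lambda \TFDRS(z)$ yields $(\TFDRS)_\lambda(z) - z = \lambda(x_f - x_h)$. Substituting the second identity of~\eqref{eq:pointsidentity} for $x_f - x_h$ delivers the subgradient form in~\eqref{eq:FPRidentity}.

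\textbf{Anticipated difficulty.} There is no serious obstacle: the argument is algebraic. The only point where one must be careful is the simultaneous cancellation in Step~2 of the $\nabla h(z)$ and $\nabla h(x_h)$ contributions and of the $z$ and $x_h$ contributions; keeping the signs straight requires remembering (i) $\nabla h(z)\in V$, (ii) $\nabla h(z)=\nabla h(x_h)$, and (iii) $(1/\gamma)(z-x_h) = \tnabla\chi_V(x_h)\in V^\perp$. Everything else is a direct unwinding of the definitions.
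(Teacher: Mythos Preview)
Your proof is correct and follows essentially the same route as the paper's: both identify the key facts $\nabla h(z)=\nabla h(x_h)\in V$ and $z-x_h=\gamma\tnabla\chi_V(x_h)\in V^\perp$, compute $\refl_{\chi_V}(z-\gamma\nabla h(z))$ explicitly, and then unwind the definitions. The only cosmetic difference is that in Step~2 the paper first rewrites $\tfrac{1}{2}I_\cH+\tfrac{1}{2}\refl_{\gamma f}\circ\refl_{\chi_V}=\prox_{\gamma f}\circ\refl_{\chi_V}+I_\cH-P_V$ before applying it, whereas you expand the averaged expression directly; the computations are equivalent.
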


\begin{definition}[Ergodic iterates]
Let $(z^j)_{j \geq 0}$ be generated by Algorithm~\ref{alg:FDRS} and define $(x_h^j)_{j \geq 0}$ and $(x_f^j)_{j \geq 0}$ as in~\eqref{eq:auxpoints} (with $z = z^j$). Then define \emph{ergodic iterates:}
\begin{align}\label{eq:ergiterate}
\overline{x}_h^k := \frac{1}{\Lambda_k} \sum_{i=0}^k\lambda_i x_h^i && \text{and}  && \overline{x}_f^k := \frac{1}{\Lambda_k} \sum_{i=0}^k \lambda_i x_f^i 
\end{align}
\vspace{-15pt}
\end{definition}

\subsection{Optimality conditions of FDRS}
The following lemma characterizes the zeros of $\partial f + \nabla h + \partial \chi_V$  in terms of the fixed-points of the FDRS operator. The intuition is the following:  If $z^\ast$ is a fixed-point of $\TFDRS$, then  the base of the rectangle in Figure~\ref{fig:FDRSsquare} has length zero.  Thus, $x^\ast: = x_h^\ast = x_f^\ast$, and if we travel around the perimeter of the rectangle, we will start and begin at $z^\ast$. This argument shows that $ \gamma \tnabla f(x^\ast) + \gamma \nabla h(x^\ast)+  \gamma \tnabla \chi_V(x^\ast)  = 0$, i.e., $x^\ast \in \zer(\partial f +  \nabla h + \partial \chi_V)$.

The following lemma is proved in Appendix~\ref{app:lem:FDRSoptimality}.
\begin{lemma}[FDRS optimality conditions]\label{lem:FDRSoptimality}
The following set equality holds:
\begin{align*}
\zer(\partial f + \nabla h + \partial \chi_V) = \{P_Vz \mid z\in \cH,  \TFDRS z = z\}
\end{align*}
That is, if $z^\ast$ is a fixed-point of $\TFDRS$, then $x^\ast := P_{V} z^\ast = x_h^\ast = x_f^\ast$ is a minimizer of~\eqref{eq:FDRSprob}, and $z^\ast - x^\ast = P_{V^\perp} (z^\ast) = \gamma \tnabla \chi_{V}(x_h^\ast) \in \partial \chi_V(x^\ast).$
\end{lemma}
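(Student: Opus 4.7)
The plan is to prove both set inclusions separately, relying on the algebraic identities established in Lemma~\ref{lem:FDRSidentities} together with the proximal optimality condition from Part~\ref{prop:basicprox:part:optprox} of Proposition~\ref{prop:basicprox}.

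For the $\supseteq$ direction, I would start from a fixed point $z^\ast$ of $\TFDRS$ and apply Lemma~\ref{lem:FDRSidentities} at $z = z^\ast$. Setting $\lambda = 1$ in~\eqref{eq:FPRidentity} forces $x_f^\ast - x_h^\ast = 0$, so the two auxiliary points coincide at a common $x^\ast := x_h^\ast = x_f^\ast = P_V z^\ast$. Substituting this into the second equation of~\eqref{eq:pointsidentity} gives $0 = \tnabla \chi_V(x^\ast) + \nabla h(x^\ast) + \tnabla f(x^\ast)$, which places $x^\ast$ in $\zer(\partial f + \nabla h + \partial \chi_V)$. The claim $z^\ast - x^\ast = \gamma\, \tnabla \chi_V(x_h^\ast) \in V^\perp$ is then immediate from the first equation in~\eqref{eq:pointsidentity} together with the explicit formula $\tnabla \chi_V(x_h^\ast) = (1/\gamma) P_{V^\perp}(z^\ast)$.

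For the $\subseteq$ direction, given $x^\ast \in \zer(\partial f + \nabla h + \partial \chi_V)$, there exist $u \in \partial f(x^\ast)$ and $v \in \partial \chi_V(x^\ast) = V^\perp$ with $u + \nabla h(x^\ast) + v = 0$; in particular $x^\ast \in V$. I would then construct the candidate fixed point $z^\ast := x^\ast + \gamma v$, so that $P_V z^\ast = x^\ast$ and $P_{V^\perp} z^\ast = \gamma v$, and verify $\TFDRS z^\ast = z^\ast$ by tracing through Algorithm~\ref{alg:FDRS}. Since $x^\ast \in V$, one has $\nabla h(z^\ast) = P_V \nabla g(P_V z^\ast) = \nabla h(x^\ast) \in V$, and an explicit computation then yields $\refl_{\chi_V} \circ (I_\cH - \gamma \nabla h)(z^\ast) = x^\ast - \gamma \nabla h(x^\ast) - \gamma v$. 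Applying the proximal optimality condition with subgradient $-\nabla h(x^\ast) - v = u \in \partial f(x^\ast)$ shows that $\prox_{\gamma f}$ sends this point back to $x^\ast$, so $x_h = x_f = x^\ast$ and~\eqref{eq:FPRidentity} gives $\TFDRS z^\ast - z^\ast = 0$.

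The only real subtlety lies in the bookkeeping of the $\subseteq$ direction: one must exploit $x^\ast \in V$, $v \in V^\perp$, and $\nabla h(x^\ast) \in V$ to track how each of $I_\cH - \gamma \nabla h$, $\refl_{\chi_V}$, and $\prox_{\gamma f}$ acts on the candidate $z^\ast$. No analytic inequalities enter; the entire argument is algebraic and rests on the orthogonal decomposition $\cH = V \oplus V^\perp$ together with the single-valued nature of the proximal subgradient $\tnabla f$.
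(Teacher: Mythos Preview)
Your proposal is correct and follows essentially the same route as the paper's proof. Both directions match: for $\supseteq$ you invoke~\eqref{eq:FPRidentity} to force $x_f^\ast = x_h^\ast$ and then read off the optimality condition from~\eqref{eq:pointsidentity}, exactly as the paper does; for $\subseteq$ you build the candidate $z^\ast = x^\ast + \gamma v$ with $v \in V^\perp$ and verify fixedness by computing $\refl_{\chi_V}\circ(I_\cH - \gamma\nabla h)(z^\ast) = x^\ast + \gamma u$ and applying Part~\ref{prop:basicprox:part:optprox} of Proposition~\ref{prop:basicprox}, which is precisely the paper's argument up to notational relabeling ($u,v$ versus $\tnabla f(x),\tnabla\chi_V(x)$).
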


\subsection{Fundamental inequalities}

In this section, we prove two fundamental inequalities that relate the FPR (see~\eqref{eq:FPR}) to the objective error. \cut{Without any Lipschitz continuity assumption, it seems impossible to bound the true objective error $(f + h + \chi_V)(x) - (f + h + \chi_V)(x^\ast)$.  Thus, we focus on bounding a modified objective error where $h+ \chi_V$ and $f$ are not necessarily evaluated at the same point.  This modified objective error is no longer positive. Therefore, we provide upper and lower bounds in Propositions~\ref{prop:FDRSupper} and~\ref{prop:FDRSlower}.}

Throughout the rest of the paper, we use the following notation: The functions $f$ and $g$ are $\mu_f$ and $\mu_g$-strongly convex, respectively, where we allow $\mu_f$ or $\mu_g$ to be zero (i.e., no strong convexity).  In addition, we assume that $f$ is $(1/\beta_f)$-Lipschitz differentiable, where we allow $\beta_f = 0$.  If $\beta_f > 0$, then $\tnabla f = \nabla f$. With these assumptions, we get the following lower bounds~\cite[Theorem 18.15]{bauschke2011convex}: 
\begin{align*}
&\forall x, y \in \dom(\partial f) &f(x) \geq f(y) + \dotp{x-y, \tnabla f(y)} +S_f(x, y); \numberthis \label{eq:fsbound}\\
&\forall x, y \in \cH  &h(x) \geq h(y) + \dotp{x-y, \nabla h(y)} + S_h(x, y); \numberthis \label{eq:hsbound}
\end{align*}
where $\tnabla f(y) \in \partial f(y)$, and for any $x, y \in \cH$, 
\begin{align*}
S_f(x, y) &:=  \begin{cases} 
\max\left\{\frac{\mu_f}{2} \|x - y\|^2, \frac{\beta_f}{2} \|\nabla f(x) - \nabla f(y)\|^2\right\} & \text{if } \beta_f > 0;\\
\frac{\mu_f}{2} \|x - y\|^2 & \text{otherwise;}
\end{cases} \numberthis \label{eq:Sf} \\
S_h(x, y) &:=  \max\left\{\frac{\mu_g}{2} \|P_Vx - P_Vy\|^2, \frac{\beta_V}{2} \|\nabla h(x) - \nabla h(y)\|^2\right\}. \numberthis \label{eq:Sh}
\end{align*}

See Appendices~\ref{app:prop:FDRSupper},~\ref{app:prop:FDRSlower}, and~\ref{app:cor:strongconvexfundamental} for the proofs of the following inequalities:

\begin{proposition}[Upper fundamental inequality]\label{prop:FDRSupper}
Let $z \in \cH$, let $\lambda > 0$, and let $z^+ := (\TFDRS)_{\lambda} (z)$. Then for all $x \in V \cap \dom(\partial f)$, we have the following inequality:
\begin{align*}
2\gamma\lambda&\left(f(x_f) + h(x_h) - f(x) - h(x) + S_f(x_f, x) + S_h(x_h, x)\right) \\
&\leq \|z - x\|^2 - \|z^+ - x\|^2 + \left(1-\frac{2}{\lambda}\right) \| z^+ - z\|^2 + 2\gamma \dotp{ \nabla h(x_h), z - z^+} \numberthis \label{eq:FDRSupper}
\end{align*}
where $x_f$ and $x_h$ are defined as in Lemma~\ref{lem:FDRSidentities}.
\end{proposition}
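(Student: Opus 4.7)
The plan is to apply the strong-convexity-enhanced subgradient inequalities~\eqref{eq:fsbound} and~\eqref{eq:hsbound} at $x_f$ and $x_h$ respectively, then use the FDRS subgradient identity from Lemma~\ref{lem:FDRSidentities} to trade the unknown $\tnabla f(x_f)$ for combinations involving $x_h - x_f$, $\nabla h(x_h)$, and $\tnabla\chi_V(x_h)$. A short cosine-rule calculation will finally convert the surviving inner products into the quantity $\|z-x\|^2 - \|z^+-x\|^2 + (1-2/\lambda)\|z^+-z\|^2$ on the right-hand side.

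First, I would apply~\eqref{eq:fsbound} with $y = x_f$ and~\eqref{eq:hsbound} with $y = x_h$, using that $S_f$ and $S_h$ are symmetric in their arguments, to obtain the bounds $f(x_f) - f(x) \leq \dotp{x_f - x, \tnabla f(x_f)} - S_f(x_f, x)$ and $h(x_h) - h(x) \leq \dotp{x_h - x, \nabla h(x_h)} - S_h(x_h, x)$. Summing and multiplying by $2\gamma\lambda$ reduces the desired inequality to an upper bound for $2\lambda\dotp{x_f - x, \gamma\tnabla f(x_f)} + 2\gamma\lambda\dotp{x_h - x, \nabla h(x_h)}$.

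Next, I would substitute the subgradient identity from Lemma~\ref{lem:FDRSidentities}, namely $\gamma\tnabla f(x_f) = x_h - x_f - \gamma\nabla h(x_h) - \gamma\tnabla\chi_V(x_h)$. The two $\nabla h(x_h)$ inner products combine to $2\gamma\lambda\dotp{x_h - x_f, \nabla h(x_h)}$, which is precisely $2\gamma\dotp{\nabla h(x_h), z - z^+}$ by the step identity $z^+ - z = \lambda(x_f - x_h)$, matching the last term in~\eqref{eq:FDRSupper}. For the $\tnabla\chi_V(x_h)$ inner product, I would exploit $\tnabla\chi_V(x_h) \in V^\perp$ together with $x, x_h \in V$ to rewrite $\dotp{x_f - x, \tnabla\chi_V(x_h)} = \dotp{x_f - x_h, \tnabla\chi_V(x_h)}$, so the contribution becomes $-2\gamma\dotp{z^+ - z, \tnabla\chi_V(x_h)}$.

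Finally, I need to verify the purely algebraic identity
\begin{equation*}
2\lambda\dotp{x_f - x, x_h - x_f} - 2\gamma\dotp{z^+ - z, \tnabla\chi_V(x_h)} = \|z-x\|^2 - \|z^+-x\|^2 + \Bigl(1-\tfrac{2}{\lambda}\Bigr)\|z^+-z\|^2.
\end{equation*}
To do this I would expand $\|z^+-x\|^2$ via $z^+ = z + \lambda(x_f - x_h)$, decompose $z - x = (z - x_h) + (x_h - x_f) + (x_f - x)$, and invoke $z - x_h = \gamma\tnabla\chi_V(x_h)$ from Lemma~\ref{lem:FDRSidentities}; the cross terms in $\|x_f - x_h\|^2$ cancel cleanly and what remains is exactly the two inner products on the left. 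The main obstacle throughout is the careful bookkeeping of signs and the systematic use of the $V/V^\perp$ splitting to kill mixed terms; once the substitutions are lined up the computation is mechanical.
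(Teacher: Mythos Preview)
Your proposal is correct and follows essentially the same route as the paper's proof: both apply the subgradient inequalities~\eqref{eq:fsbound}--\eqref{eq:hsbound}, invoke the identities of Lemma~\ref{lem:FDRSidentities} to replace $\gamma\tnabla f(x_f)$, exploit $\tnabla\chi_V(x_h)\in V^\perp$ with $x,x_h\in V$, and finish with the cosine rule. The only cosmetic difference is that the paper first inserts the vanishing term $\dotp{\tnabla\chi_V(x_h),x_h-x}$ and then groups all three subgradients so that $\gamma\lambda(\tnabla f(x_f)+\nabla h(x_h)+\tnabla\chi_V(x_h))=z-z^+$ appears at once, after which $\TFDRS z = x_f+\gamma\tnabla\chi_V(x_h)$ and $\TFDRS z = z+\tfrac{1}{\lambda}(z^+-z)$ give the result in two lines; your version reaches the same identity by substituting for $\tnabla f(x_f)$ and handling the $\nabla h$ and $\tnabla\chi_V$ pieces separately before the final expansion.
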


\begin{proposition}[Lower fundamental inequality]\label{prop:FDRSlower}
Let $z^\ast \in \cH$ be a fixed-point of $\TFDRS$, and let $x^\ast := P_V z^\ast$\cut{be a minimizer of~\eqref{eq:FDRSprob} (see Lemma~\ref{lem:FDRSoptimality})}. Choose subgradients $\tnabla f(x^\ast) \in \partial f(x^\ast)$ and $\tnabla \chi_V(x^\ast) \in \partial \chi_V(x^\ast)$ with $\tnabla f(x^\ast) + \nabla h(x^\ast) + \tnabla \chi_{V}(x^\ast)  = 0$ (see Lemma~\ref{lem:FDRSoptimality}). Then for all $x_f \in \dom(f)$ and $x_h \in V$, we have 
\begin{align*}
&f(x_f) + h(x_h) - f(x^\ast) - g(x^\ast)  \geq  \dotp{x_f - x_h, \tnabla f(x^\ast)} + S_f(x_f, x^\ast) + S_h(x_h, x^\ast). \numberthis \label{eq:FDRSlower}
\end{align*}
\end{proposition}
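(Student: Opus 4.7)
The plan is to apply the two subgradient inequalities \eqref{eq:fsbound} and \eqref{eq:hsbound} at the fixed-point-derived optimum $x^\ast$, then use the optimality condition plus the orthogonality between $V$ and $V^\perp$ to collapse two linear terms into the single mixed term $\dotp{x_f - x_h, \tnabla f(x^\ast)}$.

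First I would invoke \eqref{eq:fsbound} at $y = x^\ast$ with evaluation point $x_f \in \dom(f)$, and invoke \eqref{eq:hsbound} at $y = x^\ast$ with evaluation point $x_h \in V$, using the prescribed subgradient $\tnabla f(x^\ast)$ and the true gradient $\nabla h(x^\ast)$. Summing the two produces
\begin{align*}
f(x_f) + h(x_h) &\geq f(x^\ast) + h(x^\ast) + \dotp{x_f - x^\ast, \tnabla f(x^\ast)} \\
&\quad + \dotp{x_h - x^\ast, \nabla h(x^\ast)} + S_f(x_f, x^\ast) + S_h(x_h, x^\ast).
\end{align*}

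Next I would use the optimality relation $\nabla h(x^\ast) = -\tnabla f(x^\ast) - \tnabla \chi_V(x^\ast)$ guaranteed by Lemma~\ref{lem:FDRSoptimality} to rewrite the second linear term. This gives
\begin{align*}
\dotp{x_h - x^\ast, \nabla h(x^\ast)} = -\dotp{x_h - x^\ast, \tnabla f(x^\ast)} - \dotp{x_h - x^\ast, \tnabla \chi_V(x^\ast)}.
\end{align*}
The step that carries the actual content is recognizing that $\tnabla \chi_V(x^\ast) \in V^\perp$ by \eqref{eq:normalcone}, while both $x_h$ and $x^\ast$ lie in $V$, so $x_h - x^\ast \in V$ and therefore the last inner product vanishes. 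Adding the $\tnabla f(x^\ast)$ pieces then merges into $\dotp{x_f - x_h, \tnabla f(x^\ast)}$, exactly the right-hand side of \eqref{eq:FDRSlower}.

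Finally I would observe that $h(x^\ast) = g(P_V x^\ast) = g(x^\ast)$ since $x^\ast \in V$, which matches the asymmetric appearance of $g(x^\ast)$ (rather than $h(x^\ast)$) on the left-hand side of the claim. No real obstacle is expected; the only subtlety is the orthogonality cancellation, which is precisely the reason for restricting $x_h$ to $V$ in the hypothesis. All strong convexity and Lipschitz differentiability constants are absorbed transparently into $S_f$ and $S_h$, so no case analysis on $\mu_f, \mu_g, \beta_f, \beta_V$ is needed at this stage.
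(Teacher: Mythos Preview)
Your proposal is correct and follows essentially the same argument as the paper: apply the subgradient inequalities \eqref{eq:fsbound} and \eqref{eq:hsbound} at $x^\ast$, then use the optimality condition $\tnabla f(x^\ast) + \nabla h(x^\ast) + \tnabla \chi_V(x^\ast) = 0$ together with the orthogonality $\dotp{x_h - x^\ast, \tnabla \chi_V(x^\ast)} = 0$ (since $x_h - x^\ast \in V$ and $\tnabla \chi_V(x^\ast) \in V^\perp$) to collapse the linear terms to $\dotp{x_f - x_h, \tnabla f(x^\ast)}$. The paper's write-up merely packages the two cancellations into the single vanishing term $\dotp{x_h - x^\ast, \tnabla f(x^\ast) + \nabla h(x^\ast) + \tnabla \chi_V(x^\ast)}$, but the content is identical.
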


\begin{corollary}\label{cor:strongconvexfundamental}
Let $z \in \cH$, let $\lambda > 0$, and let $z^+ := (\TFDRS)_{\lambda} (z)$. Let $z^\ast \in \cH$ be a fixed-point of $\TFDRS$, and let $x^\ast := P_V z^\ast$\cut{ be a minimizer of~\eqref{eq:FDRSprob} (see Lemma~\ref{lem:FDRSoptimality})}. Then with $x_f$ and $x_h$ from Lemma~\ref{lem:FDRSidentities},
\begin{align*}
4\gamma \lambda(S_f(x_f, x^\ast) + S_h(x_h, x^\ast)) &\leq \|z - z^\ast\|^2 - \|z^{+} - z^\ast\|^2 + \left(1-\frac{2}{\lambda}\right) \| z^{+} - z\|^2 \\
&+ 2\gamma \dotp{ \nabla h(x_h) - \nabla h(x^\ast), z - z^{+}}. \numberthis \label{eq:strongconvexupper}
\end{align*}\vspace{-15pt}
\end{corollary}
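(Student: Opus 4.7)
The corollary is essentially the result of combining the upper fundamental inequality (Proposition~\ref{prop:FDRSupper}) with the lower fundamental inequality (Proposition~\ref{prop:FDRSlower}), specialized to the feasible/optimal point $x^\ast := P_V z^\ast$, and then converting the bounds based on $x^\ast$ into bounds based on the fixed-point $z^\ast$ by using the optimality characterization from Lemma~\ref{lem:FDRSoptimality}.

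The first step is to apply Proposition~\ref{prop:FDRSupper} with the choice $x = x^\ast$. Since $x^\ast \in V$, we have $h(x^\ast) = g(P_V x^\ast) = g(x^\ast)$, so the left-hand side becomes $2\gamma\lambda\bigl(f(x_f)+h(x_h)-f(x^\ast)-g(x^\ast)+S_f(x_f,x^\ast)+S_h(x_h,x^\ast)\bigr)$. Next, the lower fundamental inequality (Proposition~\ref{prop:FDRSlower}) provides the bound
\begin{align*}
f(x_f)+h(x_h)-f(x^\ast)-g(x^\ast) \;\geq\; \dotp{x_f-x_h,\tnabla f(x^\ast)} + S_f(x_f,x^\ast)+S_h(x_h,x^\ast).
\end{align*}
Substituting this lower bound into the upper inequality produces the intermediate estimate
\begin{align*}
4\gamma\lambda\bigl(S_f(x_f,x^\ast)+S_h(x_h,x^\ast)\bigr) + 2\gamma\lambda\dotp{x_f-x_h,\tnabla f(x^\ast)} \;\leq\; \|z-x^\ast\|^2 - \|z^+-x^\ast\|^2 \\ + (1-2/\lambda)\|z^+-z\|^2 + 2\gamma\dotp{\nabla h(x_h), z-z^+}.
\end{align*}

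The remaining work is purely algebraic. Using the FPR identity $z^+-z = \lambda(x_f - x_h)$ from Lemma~\ref{lem:FDRSidentities}, the cross term rewrites as $2\gamma\lambda\dotp{x_f-x_h,\tnabla f(x^\ast)} = 2\gamma\dotp{z^+-z, \tnabla f(x^\ast)}$, which we move to the right and combine with the $\nabla h(x_h)$ term to obtain $2\gamma\dotp{\nabla h(x_h)-(\nabla h(x^\ast)+\tnabla f(x^\ast))-\nabla h(x^\ast)\cdots}$; more precisely, after the grouping, the right-hand side contains $2\gamma\dotp{\nabla h(x_h)+\tnabla f(x^\ast),z-z^+}$. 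To convert $\|z-x^\ast\|^2-\|z^+-x^\ast\|^2$ into $\|z-z^\ast\|^2-\|z^+-z^\ast\|^2$, I would expand both differences; they differ by exactly $2\dotp{z^\ast - x^\ast, z-z^+}$. Finally, by Lemma~\ref{lem:FDRSoptimality} the residual $z^\ast - x^\ast = P_{V^\perp}z^\ast = \gamma\tnabla\chi_V(x^\ast)$ equals $-\gamma(\tnabla f(x^\ast)+\nabla h(x^\ast))$, so the leftover $2\dotp{z^\ast-x^\ast,z-z^+}$ cancels the $2\gamma\dotp{\tnabla f(x^\ast),\cdot}$ contribution and contributes a $-2\gamma\dotp{\nabla h(x^\ast),z-z^+}$, producing exactly the target term $2\gamma\dotp{\nabla h(x_h)-\nabla h(x^\ast),z-z^+}$.

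The argument is essentially bookkeeping, so there is no single hard step; the main place to be careful is the sign conventions when translating between subgradient identities at $x^\ast$ (i.e., $\tnabla f(x^\ast)+\nabla h(x^\ast)+\tnabla\chi_V(x^\ast)=0$) and the geometric relation $\gamma\tnabla\chi_V(x^\ast)=z^\ast-x^\ast$, and making sure that the two replacements (the cross term from Lemma~\ref{lem:FDRSidentities} and the $\|\cdot-x^\ast\|^2\to\|\cdot-z^\ast\|^2$ conversion) combine to produce precisely the $\nabla h(x_h)-\nabla h(x^\ast)$ residual inner product that appears on the right-hand side of~\eqref{eq:strongconvexupper}.
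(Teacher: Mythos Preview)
Your proposal is correct and follows essentially the same approach as the paper's proof: apply Proposition~\ref{prop:FDRSupper} at $x=x^\ast$, invoke Proposition~\ref{prop:FDRSlower}, use the FPR identity $z^+-z=\lambda(x_f-x_h)$ from Lemma~\ref{lem:FDRSidentities}, and replace $x^\ast$ by $z^\ast$ via the relation $z^\ast-x^\ast=\gamma\tnabla\chi_V(x^\ast)=-\gamma(\tnabla f(x^\ast)+\nabla h(x^\ast))$ from Lemma~\ref{lem:FDRSoptimality}. The only cosmetic difference is ordering: the paper first converts the upper inequality to the $z^\ast$-centered form (its equation~\eqref{eq:FDRSupper2}) using the cosine rule, and only then applies the lower inequality, whereas you apply the lower inequality first and do the $x^\ast\to z^\ast$ conversion afterward; the algebra and cancellations are identical.
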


\section{Objective convergence rates}\label{sec:objectiveconvergencerates}

In this section, we analyze the ergodic and nonergodic convergence rates of the FDRS algorithm applied to~\eqref{eq:FDRSprob}.  

Throughout the rest of the paper, $z^\ast$ will denote an arbitrary fixed-point of $\TFDRS$, and we define a minimizer of~\eqref{eq:FDRSprob} using Lemma~\ref{lem:FDRSoptimality}: $x^\ast := P_V z^\ast.$

All of our bounds will be produced on objective errors of the form:
\begin{align}\label{eq:objectiveerror}
f(x_f^k) + h(x_h^k) - f(x^\ast) - g(x^\ast) && \mathrm{and} && f(x_h^k) + h(x_h^k) - f(x^\ast) - g(x^\ast).
\end{align}
The objective error on the left hand side of~\eqref{eq:objectiveerror} can be negative. Thus, we bound its absolute value. In addition, we bound $\|x_f^k - x_h^k\|$. Because $x_h^k \in V$,  the objective error on the right hand size of~\eqref{eq:objectiveerror} is positive. Consequently, $x_h^k$ is the natural point at which to measure the convergence rate.  To derive such a bound, we assume $f$ is Lipschitz.  Note that in both cases, we have the identity $h(x_h^k) = (g\circ P_V)(x_h^k) = g(x_h^k)$.

\subsection{Ergodic convergence rates}\label{sec:ergodic}

In this section, we analyze the ergodic convergence rate of the FDRS algorithm.  The key idea is to use the telescoping property of the upper and lower fundamental inequalities, together with the summability of the difference of gradients shown in Part~\ref{thm:FDRSfacts:part:gradientsum} of Theorem~\ref{thm:FDRSfacts}.  See Section~\ref{sec:notation} for the distinction between ergodic and nonergodic convergence rates.

\begin{theorem}[Ergodic convergence of FDRS]\label{thm:FDRSergodic}
Let $\gamma \in (0, 2\beta_V)$, let $\varepsilon \in (0, 1)$, and suppose that  $(\lambda_j)_{j \geq 0}$ satisfies~\eqref{eq:lambdainclusion}. Define $(\overline{x}_f^j)_{j \geq 0}$ and $(\overline{x}_h^j)_{j \geq 0}$ as in~\eqref{eq:ergiterate}. Then we have the following convergence rate: for all $k \geq 0$, 
\begin{align*}
\frac{-2\|z^0 - z^\ast\|\|\tnabla f(x^\ast)\|}{\Lambda_k} & \leq f(\overline{x}_f^k) + h(\overline{x}_h^k) - f(x^\ast) - h(x^\ast) \\
&\leq  \frac{\left(\|z^0 - z^\ast\| + 4\gamma\|\nabla h(x^\ast)\| + \frac{(1+\varepsilon)\gamma\|z^0 - z^\ast\|}{\varepsilon^3(2\beta_V - \gamma)}\right) \|z^0 - z^\ast\|}{2\gamma\Lambda_k}. 
\end{align*}
In addition the following feasibility bound holds: $\|\overline{x}_f^k - \overline{x}_h^k\| \leq (2/\Lambda_k)\|z^0 - z^\ast\|.$
\end{theorem}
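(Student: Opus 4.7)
The plan is to combine the upper fundamental inequality (Proposition~\ref{prop:FDRSupper}) for $x = x^\ast \in V \cap \dom(\partial f)$ with the lower fundamental inequality (Proposition~\ref{prop:FDRSlower}) evaluated at the ergodic averages. First I apply Proposition~\ref{prop:FDRSupper} at each iterate $(z, z^+) = (z^k, z^{k+1})$, drop the nonnegative terms $S_f, S_h$ on the LHS, and drop the term $(1-2/\lambda_k)\|z^{k+1}-z^k\|^2$ on the RHS, which is nonpositive because the stepsize constraint~\eqref{eq:lambdainclusion} together with $\alphaFDRSV \in (1/2, 1)$ forces $\lambda_k < 1/\alphaFDRSV < 2$. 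Summing over $k = 0, \ldots, K$, Jensen's inequality promotes the LHS to $2\gamma\Lambda_K(f(\overline{x}_f^K) + h(\overline{x}_h^K) - f(x^\ast) - h(x^\ast))$, and the $\|\cdot-x^\ast\|^2$ differences telescope to $\|z^0-x^\ast\|^2 - \|z^{K+1}-x^\ast\|^2 \leq \|z^0-x^\ast\|^2$.

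The only real work is controlling $2\gamma\sum_{k=0}^{K}\langle \nabla h(x_h^k), z^k - z^{k+1}\rangle$. I split it as $2\gamma\langle \nabla h(x^\ast), z^0 - z^{K+1}\rangle + 2\gamma\sum_k \langle \nabla h(x_h^k) - \nabla h(x^\ast), z^k - z^{k+1}\rangle$. The telescoped first piece is bounded by $4\gamma\|\nabla h(x^\ast)\|\|z^0 - z^\ast\|$ using Cauchy--Schwarz and the Fej\'er monotonicity bound $\|z^{K+1}-z^\ast\| \leq \|z^0-z^\ast\|$ from Part~\ref{thm:FDRSfacts:part:fejer} of Theorem~\ref{thm:FDRSfacts}. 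For the second piece, I apply Cauchy--Schwarz on sums to get $\bigl(\sum_k \lambda_k\|\nabla h(x_h^k) - \nabla h(x^\ast)\|^2\bigr)^{1/2}\bigl(\sum_k \|z^{k+1}-z^k\|^2/\lambda_k\bigr)^{1/2}$. Since $\nabla h = P_V\nabla g P_V$ and $x_h^k = P_V z^k$, we have $\nabla h(z^k) = \nabla h(x_h^k)$, so the first factor is bounded by $\sqrt{(1+\varepsilon)/(\gamma\varepsilon(2\beta_V-\gamma))}\,\|z^0-z^\ast\|$ via Part~\ref{thm:FDRSfacts:part:gradientsum}. For the second factor, the remark~\eqref{eq:lambdaboundwithepsilon} yields $1 - \lambda_k\alphaFDRSV \geq \varepsilon^2\alphaFDRSV$, hence $1/\lambda_k \leq (1 - \lambda_k\alphaFDRSV)/(\varepsilon^2 \lambda_k\alphaFDRSV)$, which combined with Part~\ref{thm:FDRSfacts:part:sumFPR} gives $\sum_k \|z^{k+1}-z^k\|^2/\lambda_k \leq \|z^0-z^\ast\|^2/\varepsilon^2$. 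To replace $\|z^0-x^\ast\|^2$ by $\|z^0-z^\ast\|^2$, I expand using $z^\ast - x^\ast = \gamma\tnabla\chi_V(x^\ast) \in V^\perp$ (Lemma~\ref{lem:FDRSoptimality}); the cross-term $2\langle z^0-z^{K+1}, z^\ast-x^\ast\rangle$ then combines cleanly with $2\gamma\langle \nabla h(x^\ast), z^0-z^{K+1}\rangle$ because $z^\ast - x^\ast + \gamma\nabla h(x^\ast) = -\gamma\tnabla f(x^\ast)$. Dividing the resulting bound by $2\gamma\Lambda_K$ yields the upper estimate.

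For the lower bound and feasibility, I use the key identity $\lambda_k(x_f^k - x_h^k) = z^{k+1} - z^k$ from~\eqref{eq:FPRidentity}. Summing telescopes to $\sum_{i=0}^K \lambda_i(x_f^i - x_h^i) = z^{K+1} - z^0$, and dividing by $\Lambda_K$ together with the triangle inequality and Fej\'er monotonicity gives $\|\overline{x}_f^K - \overline{x}_h^K\| \leq \|z^{K+1} - z^0\|/\Lambda_K \leq 2\|z^0 - z^\ast\|/\Lambda_K$, which is the claimed feasibility bound. Plugging $(x_f, x_h) = (\overline{x}_f^K, \overline{x}_h^K)$ into Proposition~\ref{prop:FDRSlower}---legal because $\overline{x}_h^K \in V$ as a convex combination of points in $V$ and $\overline{x}_f^K \in \dom(f)$ by convexity of $\dom(f)$---and dropping the nonnegative $S_f, S_h$ terms gives $f(\overline{x}_f^K) + h(\overline{x}_h^K) - f(x^\ast) - h(x^\ast) \geq -\|\overline{x}_f^K - \overline{x}_h^K\|\,\|\tnabla f(x^\ast)\|$, and substituting the feasibility estimate produces the stated lower bound.

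The main obstacle is the bookkeeping in step two: balancing the Cauchy--Schwarz application on the $\nabla h$ cross-term so that the gradient-summability constant from Part~\ref{thm:FDRSfacts:part:gradientsum} and the $\varepsilon$-dependent FPR-over-$\lambda_k$ bound compose into the clean $\varepsilon^{-3}$ factor appearing in the theorem; and the careful cancellation that converts $\|z^0 - x^\ast\|^2$ into $\|z^0-z^\ast\|^2$ via the optimality relation $\tnabla f(x^\ast) + \nabla h(x^\ast) + \tnabla\chi_V(x^\ast) = 0$.
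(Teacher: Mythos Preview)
Your argument is correct and follows the same skeleton as the paper's proof (telescoping for feasibility, Jensen plus the upper fundamental inequality for the upper bound, the lower fundamental inequality for the lower bound). The one genuine difference is how you control the gradient cross-term $2\gamma\sum_i\dotp{\nabla h(x_h^i)-\nabla h(x^\ast),z^i-z^{i+1}}$. The paper does not discard the term $(1-2/\lambda_i)\|z^{i+1}-z^i\|^2$; instead it applies Young's inequality~\eqref{eq:young} with the specific parameter $\eta_i := 2/\lambda_i-1$, so that the resulting $\eta_i\|z^{i+1}-z^i\|^2$ exactly cancels the negative FPR term from~\eqref{eq:FDRSupper}, leaving only $(\gamma^2/\eta_i)\|\nabla h(x_h^i)-\nabla h(x^\ast)\|^2$. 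Using $1/\eta_i\le\lambda_i/\varepsilon^2$ and the gradient sum~\eqref{eq:gradientsum} then gives the $\varepsilon^{-3}$ factor directly. Your route---dropping the FPR term outright and invoking Cauchy--Schwarz on the sum together with Part~\ref{thm:FDRSfacts:part:sumFPR}---is also valid and in fact yields a sharper dependence of order $\varepsilon^{-3/2}$ rather than $\varepsilon^{-3}$, so it does not reproduce the stated constant verbatim but proves a stronger bound. Finally, the paper simply stops with $\|z^0-x^\ast\|^2$ in the numerator and does not perform your additional conversion to $\|z^0-z^\ast\|^2$ via the optimality relation; that extra step of yours is sound (and would replace $\|\nabla h(x^\ast)\|$ by $\|\tnabla f(x^\ast)\|$ in the middle term), but be aware that in your write-up you cannot both bound the $\nabla h(x^\ast)$ inner product by $4\gamma\|\nabla h(x^\ast)\|\|z^0-z^\ast\|$ \emph{and} later recombine it with the $z^\ast-x^\ast$ cross-term---pick one.
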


{\em Proof.}
Fix $k \geq 0$. The feasibility bound follows from Part~\ref{thm:FDRSfacts:part:fejer} of Theorem~\ref{thm:FDRSfacts}:
\begin{align*}
\|\overline{x}_f^k - \overline{x}_h^k\| = \left\|\frac{1}{\Lambda_k} \sum_{i=0}^k \left(z^{i+1} - z^i\right)\right\| = \frac{1}{\Lambda_k}\|z^0 - z^{k+1} \| &\leq \frac{1}{\Lambda_k}\left(\|z^0 - z^\ast\| + \|z^\ast - z^{k+1}\|\right) \\
&\leq \frac{2}{\Lambda_k}\|z^0 - z^\ast\|. \numberthis \label{eq:ergodicfeasibilityboundderivation}
\end{align*}

Now we prove the objective convergence rates. For all $k \geq 0$, let $\eta_k := 2/\lambda_k - 1$.  Note that $\eta_k > 0$ by~\eqref{eq:lambdaboundwithepsilon} because we have $\lambda_k < 1/\alphaFDRSV - \varepsilon^2 \leq 2 - \varepsilon^2$ and $1/\eta_k =  \lambda_k/(2-\lambda_k) \leq \lambda_k/\varepsilon^2$.  Thus, by Cauchy-Schwarz and~\eqref{eq:young}, we have 
\begin{align*}
&2\gamma\dotp{ \nabla h(x_h^k), z^k - z^{k+1}} = 2\gamma \dotp{ \nabla h(x^\ast), z^k - z^{k+1}} +  2\gamma \dotp{\nabla h(x_h^k) - \nabla h(x^\ast), z^k - z^{k+1}} \\
&\leq  2\gamma \dotp{ \nabla h(x^\ast), z^k - z^{k+1}} + \frac{\gamma^2}{\eta_k} \|\nabla h(x_h^k) - \nabla h(x^\ast)\|^2 + \eta_k \|z^k - z^{k+1}\|^2. \numberthis \label{eq:ergodicauxiliarybound}
\end{align*}
Therefore, by Jensen's inequality, the Cauchy-Schwarz inequality,~\eqref{eq:FDRSupper}, and the bound $\|z^{0} - z^{k+1}\|\leq 2\|z^0 - z^\ast\|$ (see~\eqref{eq:ergodicfeasibilityboundderivation}), we have
\begin{align*}
&f(\overline{x}_f^k) + h(\overline{x}_h^k) - f(x^\ast) - h(x^\ast) \leq \frac{1}{\Lambda_k} \sum_{i=0}^k \lambda_i\left (f(x_f^i) + h(x_h^i) - f(x^\ast) - h(x^\ast)\right) \\
&\stackrel{\eqref{eq:FDRSupper}}{\leq} \frac{1}{2\gamma\Lambda_k}\sum_{i=0}^k\left(\|z^i - x^\ast\|^2 - \|z^{i+1} - x^\ast\|^2  -\eta_i \| z^{i+1} - z^i\|^2 + 2\gamma \dotp{ \nabla h(x_h^i), z^i - z^{i+1}}\right) \\
&\stackrel{\eqref{eq:ergodicauxiliarybound}}{\leq} \frac{1}{2\gamma\Lambda_k}\left(\|z^0 - x^\ast\|^2  + 2\gamma \dotp{\nabla h(x^\ast), z^{0} - z^{k+1}} + (\gamma^2/\varepsilon^2) \sum_{i=0}^\infty \lambda_i \|\nabla h(x_h^i) - \nabla h(x^\ast) \|^2\right) \\
&\stackrel{\eqref{eq:gradientsum}}{\leq} \frac{\|z^0 - x^\ast\|^2 + 4\gamma\|\nabla h(x^\ast)\|\|z^0 - z^\ast\| + (1+\varepsilon)\gamma\|z^0 - z^\ast\|^2/(\varepsilon^3(2\beta_V - \gamma))}{2\gamma\Lambda_k}.
\end{align*} 

The lower bound in Proposition~\ref{prop:FDRSlower} and the Cauchy-Schwarz inequality show that
\begin{align*}
f(\overline{x}_f^k) + h(\overline{x}_h^k) - f(x^\ast) - h(x^\ast) \geq \dotp{\overline{x}_f^k - \overline{x}_h^k, \tnabla f(x^\ast)} &\geq -\|\overline{x}_f^k - \overline{x}_h^k\|\|\tnabla f(x^\ast)\| \\
&\geq \frac{-2\|z^0 - z^\ast\|\|\tnabla f(x^\ast)\|}{\Lambda_k}.\qquad \endproof
\end{align*}

In general, $\overline{x}_h^k$ and  $\overline{x}_h^k$ are not in $\dom(f)$. However, the conclusion of Theorem~\ref{thm:FDRSergodic} can be improved if $f$ is Lipschitz continuous. The following proposition gives a sufficient condition for Lipschitz continuity on a ball.

\begin{proposition}[Lipschitz continuity on a ball {\cite[Proposition 8.28]{bauschke2011convex}}]\label{prop:lipschitzCont}
Suppose that $f : \cH \rightarrow (-\infty, \infty]$ is proper and convex.  Let $\rho > 0$, and let $x_0 \in \dom(f)$.  If $\delta = \sup_{x, y \in B(x_0, 2\rho)} |f(x) - f(y)| < \infty$, then $f$ is $({\delta}/{\rho})$-Lipschitz on $B(x_0, \rho)$.
\end{proposition}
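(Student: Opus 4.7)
The plan is the standard convex-analysis chord-extension argument: given distinct $x, y \in B(x_0, \rho)$, I would extend the segment $[x,y]$ past $y$ to a point $z$ that still lies in the outer ball $B(x_0, 2\rho)$, and then invoke convexity of $f$ along the chord $[x,z]$ to bound $f(y) - f(x)$.

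Concretely, set $z := y + (\rho/\|y-x\|)(y-x)$. By construction $\|z - y\| = \rho$, so the triangle inequality gives $\|z - x_0\| \leq \|z-y\| + \|y-x_0\| \leq 2\rho$, placing $z \in B(x_0, 2\rho)$. A short calculation shows that $y$ is a convex combination $y = (1-t)x + tz$ with $t := \|y-x\|/(\|y-x\|+\rho) \in [0, 1)$. Convexity then yields
\begin{align*}
f(y) - f(x) \leq t\bigl(f(z) - f(x)\bigr) \leq t\delta \leq \frac{\|y-x\|}{\rho}\delta,
\end{align*}
where the last inequality uses $\|y-x\| + \rho \geq \rho$. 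Swapping the roles of $x$ and $y$ (and constructing the analogous extension past $x$) produces the reverse bound, and combining the two yields $|f(x) - f(y)| \leq (\delta/\rho)\|x-y\|$, which is exactly the Lipschitz estimate on $B(x_0, \rho)$.

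The one point that needs care is ensuring that $f(z)$ is finite so that the convexity inequality has genuine content. This is automatic: since $x_0 \in \dom(f)$ and $\delta < \infty$, we have $|f(x) - f(x_0)| \leq \delta$ for all $x \in B(x_0, 2\rho)$, hence $B(x_0, 2\rho) \subseteq \dom(f)$. The case $x = y$ is trivial, and the boundary case $y \in \partial B(x_0, \rho)$ is handled uniformly because the outer ball has radius strictly larger than $\rho$. I do not expect any real obstacle here; the argument is essentially two lines of geometry plus one application of the definition of convexity, and it reproduces the proof of Proposition~8.28 in Bauschke--Combettes.
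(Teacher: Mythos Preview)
Your proof is correct and is exactly the standard chord-extension argument. Note, however, that the paper does not actually supply a proof of this proposition: it is quoted verbatim as \cite[Proposition~8.28]{bauschke2011convex} and used as a black box, so there is nothing in the paper to compare your argument against beyond the citation itself. Your write-up essentially reproduces the textbook proof; the only cosmetic point is that since $y$ lies in the \emph{open} ball $B(x_0,\rho)$, the triangle inequality gives the strict bound $\|z-x_0\|<2\rho$, so $z$ automatically lands in the open outer ball and your final remark about the boundary case is unnecessary.
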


To use this fact, we need to show that the sequences $(x_f^j)_{j \geq 0}$, and $(x_h^j)_{j \geq 0}$ are bounded.  Recall that $x_h^s = P_V(z^s)$ and $x_f^s = \prox_{\gamma f}\circ \refl_{\chi_V}\circ ( I_\cH - \gamma \nabla h)(z^s)$ for $s \in \{\ast, k\}$. Proximal, reflection, and forward-gradient maps are nonexpansive (see Proposition~\ref{prop:basicprox}, the Baillon-Haddad Theorem~\cite{baillon1977quelques}, and \cite[Proposition 4.33]{bauschke2011convex}), so we have $\max\{ \|x_f^k - x^\ast\|, \|x_h^k - x^\ast\|\} \leq \|z^k - z^\ast\| \leq \|z^0 - z^\ast\|$ for all $k \geq 0$. Thus, $(x_f^j)_{j \geq 0}, (x_h^j)_{j \geq 0} \subseteq \overline{B(x^\ast, \|z^0 - z^\ast\|)}.$  The ball is convex, so $(\overline{x}_f^j)_{j \geq 0}, (\overline{x}_h^j)_{j \geq 0} \subseteq \overline{B(x^\ast, \|z^0 - z^\ast\|)}.$

\begin{corollary}[Ergodic convergence with Lipschitz $f$]\label{cor:FDRSergodiclipschitz}
Let the notation be as in Theorem~\ref{thm:FDRSergodic}.  Let $L \geq 0$ and suppose $f$ is $L$-Lipschitz on $\overline{B(x^\ast, \|z^0 - z^\ast\|)}$.  Then 
\begin{align*}
0 &\leq f(\overline{x}_h^k) + h(\overline{x}_h^k) - f(x^\ast) - h(x^\ast) \\
&\leq  \frac{\left(\|z^0 - z^\ast\| + 4\gamma\|\nabla h(x^\ast)\| + \frac{(1+\varepsilon)\gamma\|z^0 - z^\ast\|}{\varepsilon^3(2\beta_V - \gamma)}\right) \|z^0 - z^\ast\|}{2\gamma\Lambda_k} + \frac{2L\|z^0 - z^\ast\|}{\Lambda_k}.
\end{align*}
\end{corollary}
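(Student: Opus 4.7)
The plan is to split the new objective error into the one already controlled by Theorem~\ref{thm:FDRSergodic} plus a correction that measures how far apart $\overline{x}_h^k$ and $\overline{x}_f^k$ are, and then to bound this correction via the Lipschitz hypothesis together with the feasibility bound already established.

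\textbf{Lower bound.} First I would observe that the estimate is nonnegative. Since each $x_h^i=P_V z^i\in V$ and $V$ is a closed subspace, the ergodic average $\overline{x}_h^k\in V$, so $P_V\overline{x}_h^k=\overline{x}_h^k$ and hence $h(\overline{x}_h^k)=g(P_V\overline{x}_h^k)=g(\overline{x}_h^k)$; similarly $h(x^\ast)=g(x^\ast)$ because $x^\ast=P_Vz^\ast\in V$ by Lemma~\ref{lem:FDRSoptimality}. Since $x^\ast\in\zer(\partial f+\nabla h+\partial\chi_V)$ is a minimizer of~\eqref{eq:FDRSprob}, we get $f(\overline{x}_h^k)+g(\overline{x}_h^k)\geq f(x^\ast)+g(x^\ast)$, which is exactly the desired lower bound.

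\textbf{Upper bound.} I would write the telescoping identity
\begin{align*}
f(\overline{x}_h^k)+h(\overline{x}_h^k)-f(x^\ast)-h(x^\ast)
=\bigl[f(\overline{x}_f^k)+h(\overline{x}_h^k)-f(x^\ast)-h(x^\ast)\bigr]
+\bigl[f(\overline{x}_h^k)-f(\overline{x}_f^k)\bigr].
\end{align*}
The first bracket is controlled directly by the upper half of Theorem~\ref{thm:FDRSergodic}. For the second bracket, I would invoke the remark preceding the corollary: the proximal, reflection, and forward-gradient maps are all nonexpansive (Proposition~\ref{prop:basicprox} together with the Baillon–Haddad theorem), so $\|x_h^k-x^\ast\|,\|x_f^k-x^\ast\|\leq \|z^k-z^\ast\|\leq\|z^0-z^\ast\|$. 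By convexity of the closed ball, the ergodic averages $\overline{x}_h^k,\overline{x}_f^k$ also lie in $\overline{B(x^\ast,\|z^0-z^\ast\|)}$. The Lipschitz hypothesis then gives $|f(\overline{x}_h^k)-f(\overline{x}_f^k)|\leq L\|\overline{x}_h^k-\overline{x}_f^k\|$, and the feasibility estimate $\|\overline{x}_f^k-\overline{x}_h^k\|\leq (2/\Lambda_k)\|z^0-z^\ast\|$ from Theorem~\ref{thm:FDRSergodic} closes the gap with the additive correction $2L\|z^0-z^\ast\|/\Lambda_k$.

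\textbf{Assembly and expected difficulties.} Adding the two bounds yields precisely the right-hand side of the stated estimate. There is no serious obstacle: all of the ingredients—the fundamental bound for the $(x_f^k,x_h^k)$ pair, the feasibility residual $\|\overline{x}_f^k-\overline{x}_h^k\|$, and the nonexpansiveness-based confinement of the iterates to a ball of radius $\|z^0-z^\ast\|$—are already in place. The only point to verify carefully is that the ergodic averages inherit membership in $\overline{B(x^\ast,\|z^0-z^\ast\|)}$, which follows by convexity of the ball; this is the single place where one might slip if one forgot that Lipschitz continuity is only assumed locally.
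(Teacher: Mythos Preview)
Your proposal is correct and follows essentially the same approach as the paper: the paper's proof simply combines the upper bound of Theorem~\ref{thm:FDRSergodic} with the Lipschitz estimate $f(\overline{x}_h^k)\leq f(\overline{x}_f^k)+L\|\overline{x}_f^k-\overline{x}_h^k\|\leq f(\overline{x}_f^k)+2L\|z^0-z^\ast\|/\Lambda_k$, exactly as you do. You are in fact more explicit than the paper about the lower bound and about why the ergodic averages remain in the ball where $f$ is Lipschitz, both of which the paper leaves implicit.
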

{\em Proof.}
The proof follows from by combining the upper bound in Theorem~\ref{thm:FDRSergodic} with the following bound: $
f(\overline{x}_h^k)  \leq f(\overline{x}_f^k) + L\|\overline{x}_f^k - \overline{x}_h^k\| \leq  f(\overline{x}_f^k) + 2L\|z^0 - z^\ast\|/\Lambda_k. \qquad \endproof$

\begin{remark}\label{rem:optimalergodic}
Corollary~\ref{cor:FDRSergodiclipschitz} is sharp~\cite[Proposition 8]{davis2014convergence}.
\end{remark}

\subsection{Nonergodic convergence rates}
In this section, we analyze the nonergodic convergence rate of  FDRS when $(\lambda_j)_{j \geq 0}$ is bounded away from $0$ and $1/\alphaFDRSV$. The proof bounds the inequalities in Propositions~\ref{prop:FDRSupper} and~\ref{prop:FDRSlower} with Theorem~\ref{thm:FDRSfacts}.

\begin{theorem}[Nonergodic convergence of FDRS]\label{thm:FDRSnonergodic}
For all $k \geq 0$, let $\lambda_k \in (0, 1/\alphaFDRSV)$.  Suppose that $\underline{\tau} := \inf_{j \geq 0} (1-\alphaFDRSV\lambda_j)\lambda_j/\alphaFDRSV > 0$. Then
\begin{align*}
 \|x_f^k - x_h^k\| \leq \frac{\|z^0 - z^\ast\|}{\sqrt{\underline{\tau}(k+1)}}, &&&& \|x_f^k - x_h^k\| = o\left(\frac{1}{\sqrt{k+1}}\right),
 \end{align*}
 and 
\begin{align*}
-\frac{\|z^0 - z^\ast\|\|\tnabla f(x^\ast)\|}{\sqrt{\underline{\tau}(k+1)}} &\leq f(x_f^k) + h(x_h^k) - f(x^\ast) - g(x^\ast) \\
&\leq \frac{\left(\|z^{\ast} - x^\ast\| + (1+ {\gamma}/{\beta_V})\|z^0 - z^\ast\| + \gamma\|\nabla h(x^\ast)\|\right)\|z^0 - z^\ast\|}{\gamma\sqrt{\underline{\tau}(k+1)}}, 
\end{align*}
and $|f(x_f^k) + h(x_h^k) - f(x^\ast) - g(x^\ast)| =  o(1/\sqrt{k+1})$. 
\end{theorem}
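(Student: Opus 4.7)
The plan is to prove the feasibility bound first, since it follows directly from the FPR rates of Theorem~\ref{thm:FDRSfacts}, and then use it to drive both the upper and lower objective bounds. The key identity from Lemma~\ref{lem:FDRSidentities} is $(\TFDRS)_{\lambda_k}(z^k) - z^k = \lambda_k(x_f^k - x_h^k)$, which gives $\|x_f^k - x_h^k\| = \|\TFDRS(z^k) - z^k\|$. Plugging $\lambda = 1$ into~\eqref{thm:FDRSfacts:part:convergenceFPR:eq} immediately yields both the $O(1/\sqrt{k+1})$ and $o(1/\sqrt{k+1})$ bounds on $\|x_f^k - x_h^k\|$.

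The lower bound on the objective error is the shortest step: apply Proposition~\ref{prop:FDRSlower} at $x_f = x_f^k$, $x_h = x_h^k$, discard the nonnegative $S_f$ and $S_h$ terms, and bound the inner product with Cauchy--Schwarz: $\dotp{x_f^k - x_h^k, \tnabla f(x^\ast)} \geq -\|x_f^k - x_h^k\|\|\tnabla f(x^\ast)\|$. Combined with the feasibility bound, this gives the desired lower bound and its little-$o$ version.

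The upper bound requires a bit more work, and I expect this to be the main obstacle. Applying Proposition~\ref{prop:FDRSupper} with $x = x^\ast$, $z = z^k$, $z^+ = z^{k+1}$ (and dropping the nonnegative $S_f, S_h$ terms on the left), I will need to control the right-hand side, which contains the telescoping difference $\|z^k - x^\ast\|^2 - \|z^{k+1} - x^\ast\|^2$, the term $(1 - 2/\lambda_k)\|z^{k+1} - z^k\|^2$, and a gradient cross term. The stepsize constraint $\lambda_k < 1/\alphaFDRSV = 2 - \gamma/(2\beta_V) < 2$ forces $1 - 2/\lambda_k < 0$, so that term drops. For the telescoping difference, I factor it as
\begin{align*}
\|z^k - x^\ast\|^2 - \|z^{k+1} - x^\ast\|^2 &= (\|z^k - x^\ast\| - \|z^{k+1} - x^\ast\|)(\|z^k - x^\ast\| + \|z^{k+1} - x^\ast\|) \\
&\leq 2\|z^{k+1} - z^k\|(\|z^0 - z^\ast\| + \|z^\ast - x^\ast\|),
\end{align*}
using the reverse triangle inequality together with Fej\'er monotonicity (Part~\ref{thm:FDRSfacts:part:fejer} of Theorem~\ref{thm:FDRSfacts}) to uniformly bound $\|z^k - x^\ast\|$. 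The crucial point here is that this sacrifices no order: we turn a non-telescoping sum into a single $\|z^{k+1} - z^k\|$ factor. For the gradient term, I split $\nabla h(x_h^k) = (\nabla h(x_h^k) - \nabla h(x^\ast)) + \nabla h(x^\ast)$, apply Cauchy--Schwarz, and use $(1/\beta_V)$-Lipschitz continuity of $\nabla h$ together with nonexpansiveness of $P_V$ and Fej\'er monotonicity to get $\|\nabla h(x_h^k) - \nabla h(x^\ast)\| \leq (1/\beta_V)\|z^0 - z^\ast\|$.

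Finally, I divide through by $2\gamma\lambda_k$ and use $\|z^{k+1} - z^k\|/\lambda_k = \|x_f^k - x_h^k\|$ to convert the RHS to a constant multiple of $\|x_f^k - x_h^k\|$. Applying the feasibility bound from Step~1 delivers exactly the upper bound in the theorem statement, and the $o(1/\sqrt{k+1})$ rate of $\|x_f^k - x_h^k\|$ transfers to the objective error.
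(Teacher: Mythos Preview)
Your proposal is correct and follows the same overall architecture as the paper: feasibility from the FPR rate in Theorem~\ref{thm:FDRSfacts}, the lower bound from Proposition~\ref{prop:FDRSlower} plus Cauchy--Schwarz, and the upper bound from Proposition~\ref{prop:FDRSupper} with $x=x^\ast$. The final constants you obtain match the statement exactly.

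The one genuine difference is how you handle the term $\|z^k-x^\ast\|^2-\|z^+-x^\ast\|^2$ in the upper bound. The paper observes that $x_f^k,x_h^k$ depend only on $z^k$ and not on $\lambda_k$, so it applies Proposition~\ref{prop:FDRSupper} with a \emph{free} parameter $\lambda$ and $z_\lambda:=(\TFDRS)_\lambda(z^k)$, rewrites the difference of squares via the cosine rule~\eqref{eq:cosinerule} as $2\dotp{z_\lambda-x^\ast,\,z^k-z_\lambda}+\|z^k-z_\lambda\|^2$, and then sets $\lambda=1$ so that the combined quadratic term $2(1-1/\lambda)\|z_\lambda-z^k\|^2$ vanishes identically. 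You instead keep $\lambda=\lambda_k$, drop the (negative) term $(1-2/\lambda_k)\|z^{k+1}-z^k\|^2$ outright, and factor the difference of squares with the reverse triangle inequality. Both devices land on the same $\|z^{k+1}-z^k\|$-linear bound with the same constant; yours is slightly more elementary, while the paper's $\lambda$-decoupling trick is the mechanism it reuses elsewhere (e.g.\ in the nonergodic part of Theorem~\ref{thm:strongconvexity}).
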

\begin{proof}
First we note that $\left(\|\nabla h(x_h^j)\|\right)_{j \geq 0}$ is bounded: for all $k \geq 0$,
\begin{align*}
\|\nabla h(x_h^k)\| &\leq \|\nabla h(x_h^k) - \nabla h(x^\ast)\| + \|\nabla h(x^\ast)\| = \|\nabla h(z^k) - \nabla h(z^\ast)\| + \|\nabla h(x^\ast)\| \\
&\leq \frac{1}{\beta_V}\|z^k - z^\ast\| + \|\nabla h(x^\ast)\| \leq \frac{1}{\beta_V}\|z^0 - z^\ast\| + \|\nabla h(x^\ast)\| \numberthis \label{eq:FDRSnonergodichbound}
\end{align*}
because $(\|z^j - z^\ast\|)_{j \geq 0}$ is decreasing (see Part~\ref{thm:FDRSfacts:part:fejer} of Theorem~\ref{thm:FDRSfacts}).

Next fix $k \geq 0$. For any $\lambda > 0$, define $z_\lambda := (\TFDRS)_{\lambda}(z^k)$.\cut{ It follows that, $z_\lambda - z^k = \lambda( \TFDRS(z^k) - z^k)$ and, hence, we can bound the size $z_\lambda - z^k$ with~\eqref{thm:FDRSfacts:part:convergenceFPR:eq}.} Observe that $x_f^k$ and $x_h^k$ do not depend on the value of $\lambda_k$. Therefore, by Proposition~\ref{prop:FDRSupper} and  Lemma~\ref{lem:FDRSidentities},
\begin{align*}
& f(x_f^k) + h(x_h^k) - f(x^\ast) - g(x^\ast)  \\
&\leq \inf_{\lambda \in [0, 1/\alphaFDRSV)}\frac{1}{2\gamma\lambda}\biggl(\|z^k - x^\ast\|^2 - \|z_\lambda - x^\ast\|^2 + \left(1-\frac{2}{\lambda}\right) \| z_\lambda - z^k\|^2  \\
&\quad\quad\quad\quad\quad\quad\quad\quad\quad + 2\gamma \dotp{\nabla h(x_h^k), z^k - z_\lambda}\biggr) \\
&\stackrel{\eqref{eq:cosinerule}}{=}  \inf_{\lambda \in [0, 1/\alphaFDRSV)} \frac{1}{2\gamma\lambda}\biggl(2\dotp{z_\lambda - x^\ast, z^k - z_\lambda} + 2\left(1 - \frac{1}{\lambda}\right)\| z_\lambda - z^k\|^2\\
&\quad\quad\quad\quad\quad\quad\quad\quad\quad + 2\gamma \dotp{\nabla h(x_h^k), z^k - z_\lambda}\biggr) \\
&\stackrel{\eqref{eq:FDRSnonergodichbound}}{\leq}  \frac{1}{2\gamma}\left(2\dotp{z_1 - x^\ast, z^k - z_1} + 2\gamma \left(\frac{1}{\beta_V}\|z^0 - z^\ast\| + \|\nabla h(x^\ast)\|\right) \|z_1 - z^k\|\right) \numberthis \label{eq:littleoupper}\\
&\stackrel{\eqref{thm:FDRSfacts:part:convergenceFPR:eq}}{\leq} \frac{\left(\|z_1 - x^\ast\| + ({\gamma}/{\beta_V})\|z^0 - z^\ast\| + \gamma\|\nabla h(x^\ast)\|\right)\|z^0 - z^\ast\|}{\gamma\sqrt{\underline{\tau}(k+1)}}  \\
&\leq  \frac{\left(\|z^{\ast} - x^\ast\| + (1+ {\gamma}/{\beta_V})\|z^0 - z^\ast\| + \gamma\|\nabla h(x^\ast)\|\right)\|z^0 - z^\ast\|}{\gamma\sqrt{\underline{\tau}(k+1)}}
\end{align*}
where we use $\|z_1 - x^\ast\| \leq \|z_1- z^\ast\| + \|z^\ast - x^\ast\| \leq \|z^0 - z^\ast\| + \|z^\ast - x^\ast\|$ (Theorem~\ref{thm:FDRSfacts}).

The lower bound follows from~\eqref{eq:FDRSlower} and Part~\ref{thm:FDRSfacts:part:convergenceFPR} of Theorem~\ref{thm:FDRSfacts}:
\begin{align*}
f(x_f^k) + h(x_h^k) - f(x^\ast) - g(x^\ast) \geq \dotp{ x_f^k - x_h^k, \tnabla f(x^\ast)} &= \frac{1}{\lambda_k}\dotp{z^{k+1} - z^k, \tnabla f(x^\ast)} \numberthis \label{eq:littleolower} \\
&\stackrel{\eqref{thm:FDRSfacts:part:convergenceFPR:eq}}{\geq} -\frac{\|z^0 - z^\ast\|\|\tnabla f(x^\ast)\|}{\sqrt{\underline{\tau}(k+1)}}.
\end{align*}

The $o(1/\sqrt{k+1})$ rates follow from~\eqref{eq:littleoupper} and~\eqref{eq:littleolower}, and the corresponding rates for the FPR in~\eqref{thm:FDRSfacts:part:convergenceFPR:eq}. The bounds on $x_f^k - x_h^k$ follow from $x_f^k - x_h^k = \TFDRS z^k - z^k$.
\end{proof}

If $f$ is Lipschitz continuous, we can evaluate the entire objective function at $x_h^k$.  The proof of the following corollary is analogous to  Corollary~\ref{cor:FDRSergodiclipschitz}. We ask the reader to recall from Section~\ref{sec:ergodic} that $(x_f^j)_{j \geq 0}, (x_h^j)_{j \geq 0} \subseteq \overline{B(x^\ast, \|z^0 - z^\ast\|)}.$

\begin{corollary}[Nonergodic convergence with Lipschitz $f$]\label{cor:FDRSnonergodicLipschitz}
Let the notation be as in Theorem~\ref{thm:FDRSnonergodic}.  Let $L \geq 0$ and suppose $f$ is $L$-Lipschitz on $\overline{B(x^\ast, \|z^0 - z^\ast \|)}$.  Then 
\begin{align*}
0&\leq f(x_h^k) + h(x_h^k) - f(x^\ast) - h(x^\ast) \\
&\leq  \frac{\left(\|z^{\ast} - x^\ast\| + (1+ {\gamma}/{\beta_V})\|z^0 - z^\ast\| + \gamma\|\nabla h(x^\ast)\| \right)\|z^0 - z^\ast\|}{\gamma\sqrt{\underline{\tau}(k+1)}} + \frac{L\|z^0 - z^\ast\|}{\sqrt{\underline{\tau}(k+1)}},
\end{align*}
and $f(x_h^k) + h(x_h^k) - f(x^\ast) - h(x^\ast) = o(1/\sqrt{k+1})$.
\end{corollary}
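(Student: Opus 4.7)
The plan is to mirror the strategy used for Corollary~\ref{cor:FDRSergodiclipschitz}, but with the nonergodic bounds of Theorem~\ref{thm:FDRSnonergodic} in place of the ergodic ones. The corollary has two parts: a lower bound (which should simply be nonnegativity of the true objective error), and a matching upper bound with an extra $L\|z^0-z^\ast\|/\sqrt{\underline{\tau}(k+1)}$ term.

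For the lower bound, I would observe that $x_h^k = P_V z^k \in V$, so $\chi_V(x_h^k)=0$, and $x^\ast \in V$ is a minimizer of $f+h+\chi_V$ by Lemma~\ref{lem:FDRSoptimality}. Since the Lipschitz hypothesis on the ball $\overline{B(x^\ast,\|z^0-z^\ast\|)}$ in particular places $x_h^k$ in $\dom(f)$, the optimality of $x^\ast$ gives
\[
f(x_h^k) + h(x_h^k) - f(x^\ast) - h(x^\ast) \geq 0.
\]

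For the upper bound, the idea is to convert the already-proved bound for $f(x_f^k) + h(x_h^k)$ into one for $f(x_h^k) + h(x_h^k)$ by swapping $x_f^k$ for $x_h^k$ inside $f$ at the cost of a Lipschitz term. Because the nonexpansivity argument recalled in the paragraph preceding Corollary~\ref{cor:FDRSnonergodicLipschitz} gives $(x_f^j)_{j\geq 0},(x_h^j)_{j\geq 0}\subseteq \overline{B(x^\ast,\|z^0-z^\ast\|)}$, the $L$-Lipschitz property applies at both endpoints and yields
\[
f(x_h^k) \leq f(x_f^k) + L\|x_f^k - x_h^k\|.
\]
Adding $h(x_h^k)-f(x^\ast)-h(x^\ast)$ to both sides, applying the upper bound from Theorem~\ref{thm:FDRSnonergodic} to $f(x_f^k)+h(x_h^k)-f(x^\ast)-h(x^\ast)$, and using the feasibility rate $\|x_f^k-x_h^k\| \leq \|z^0-z^\ast\|/\sqrt{\underline{\tau}(k+1)}$ (also from Theorem~\ref{thm:FDRSnonergodic}) produces exactly the displayed upper estimate.

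Finally, for the $o(1/\sqrt{k+1})$ refinement, I would note that the entire chain above has already been decomposed: the objective-error part is $o(1/\sqrt{k+1})$ by the corresponding statement in Theorem~\ref{thm:FDRSnonergodic}, and the Lipschitz remainder $L\|x_f^k-x_h^k\|$ inherits the $o(1/\sqrt{k+1})$ rate from the FPR bound $\|x_f^k-x_h^k\| = \|\TFDRS z^k - z^k\| = o(1/\sqrt{k+1})$ in~\eqref{thm:FDRSfacts:part:convergenceFPR:eq}. Summing two $o(1/\sqrt{k+1})$ quantities gives the conclusion. There is no real obstacle here; the only thing to be careful about is verifying that $x_h^k$ lies in $\dom(f)$, which is immediate from the Lipschitz hypothesis on the closed ball containing it.
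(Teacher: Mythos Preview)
Your proposal is correct and follows essentially the same approach as the paper: use the Lipschitz bound $f(x_h^k)\le f(x_f^k)+L\|x_f^k-x_h^k\|$, apply the upper bound and feasibility estimate from Theorem~\ref{thm:FDRSnonergodic}, and deduce the $o(1/\sqrt{k+1})$ rate from the corresponding rates for the split objective error and for $\|x_f^k-x_h^k\|=\|\TFDRS z^k-z^k\|$. Your explicit justification of the lower bound (via $x_h^k\in V\cap\dom(f)$ and optimality of $x^\ast$) is a small addition the paper leaves implicit.
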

\begin{proof}
Combine the upper bound in Theorem~\ref{thm:FDRSnonergodic} with the following bound: $f(x_h^k)  \leq f(x_f^k) + L\|x_f^k - x_h^k\| \leq  f(x_f^k)+ L\|z^0 - z^\ast\|/\sqrt{\underline{\tau}(k+1)}.$ The $o(1/\sqrt{k+1})$ rate follows because $\|x_f^k - x_h^k\| = \|\TFDRS z^k - z^k\| = o(1/\sqrt{k+1})$ (see~\eqref{eq:FPRidentity} and~\eqref{thm:FDRSfacts:part:convergenceFPR:eq}) and $|f(x_f^k) +  h(x_h^k) - f(x^\ast) - h(x^\ast) | = o(1/\sqrt{k+1})$ (see Theorem~\ref{thm:FDRSnonergodic}).
\end{proof}

\begin{remark}\label{rem:optimalnonergodic}
Corollary~\ref{cor:FDRSnonergodicLipschitz} is sharp \cite[Theorem 11]{davis2014convergence}.
\end{remark}

\section{Strong convexity}

In this section, we show that $(x_f^j)_{j \geq 0}$, $(x_h^j)_{j \geq 0}$, and their ergodic variants converge strongly whenever $f$ or $g$ is strongly convex. The techniques in this section are similar to those in Section~\ref{sec:objectiveconvergencerates}, so we defer the proof to Appedix~\ref{app:thm:strongconvexity}

\begin{theorem}[Auxiliary term bound]\label{thm:strongconvexity}
Let $\gamma \in (0, 2\beta_V)$, let $(\lambda_j)_{j \geq 0} \subseteq (0, 1/\alphaFDRSV),$ let $z^0 \in \cH$, and suppose that $(z^j)_{j \geq 0}$ is generated by Algorithm~\ref{alg:FDRS}. Then
\begin{remunerate}
\item {\em ``Best" iterate convergence:} Let $\varepsilon \in (0, 1)$ and suppose that  $(\lambda_j)_{j \geq 0}$ satisfies~\eqref{eq:lambdainclusion}.  If $\underline{\lambda} := \inf_{j \geq 0} \lambda_j > 0$, then
\begin{align*}
& \min_{0 \leq j \leq k}\left(S_f(x_f^{j}, x^\ast) + S_h(x_h^{j}, x^\ast)\right) \leq \frac{\left(1 + \frac{(1+\varepsilon)\gamma}{\varepsilon^3(2\beta_V-\gamma)} \right)\|z^0 - z^\ast\|^2}{4\gamma\underline{\lambda}(k+1)}.
\end{align*}
and $\min_{0 \leq j \leq k}S_f(x_f^{j}, x^\ast) = o({1}/{(k+1)})$ and $\min_{0 \leq j \leq k}S_h(x_h^{j}, x^\ast) = o({1}/{(k+1)})$.
\item {\em Ergodic convergence:}
If $\varepsilon \in (0, 1)$, and $(\lambda_j)_{j \geq 0}$ satisfies~\eqref{eq:lambdainclusion}, then
\begin{align*}
\frac{\mu_f}{2}\|\overline{x}_f^k - x^\ast\|^2 + \frac{\mu_h}{2} \|\overline{x}_h^k - x^\ast\|^2 &\leq \frac{\left(1 + \frac{(1+\varepsilon)\gamma}{\varepsilon^3(2\beta_V-\gamma)} \right)\|z^0 - z^\ast\|^2}{4\gamma\Lambda_k}.
\end{align*}
\item {\em Nonergodic convergence:}\label{thm:strongconvexity:part:nonergodic}
If $\underline{\tau} := \inf_{j \geq 0} (1-\alphaFDRSV\lambda_j)\lambda_j/\alphaFDRSV > 0$, then $S_f(x_f^k, x^\ast) + S_h(x_h^k, x^\ast) = o(1/\sqrt{k+1})$ and 
\begin{align*}
S_f(x_f^k, x^\ast) + S_h(x_h^k, x^\ast) &\leq\frac{(1+ {\gamma}/{\beta_V})\|z^0 - z^\ast\|^2}{2\gamma\sqrt{\underline{\tau}(k+1)}},
\end{align*}
\end{remunerate}
\end{theorem}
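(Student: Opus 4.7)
The plan is to extract every bound from Corollary~\ref{cor:strongconvexfundamental} instantiated at $z = z^k$, since its left-hand side is precisely $4\gamma\lambda(S_f(x_f, x^\ast) + S_h(x_h, x^\ast))$. The overall skeleton mirrors the proofs of Theorems~\ref{thm:FDRSergodic} and~\ref{thm:FDRSnonergodic}: summing the key inequality yields the ergodic and best-iterate bounds, while a one-step Cauchy--Schwarz argument yields the nonergodic bound. The central technical move, common to all three parts, will be to split the cross term $2\gamma \dotp{\nabla h(x_h^i) - \nabla h(x^\ast), z^i - z^{i+1}}$ so that it is absorbed by the negative $(1-2/\lambda_i)\|z^{i+1} - z^i\|^2$ contribution.

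For Parts~1 and~2, I will substitute $(z, z^+, \lambda) = (z^i, z^{i+1}, \lambda_i)$ into Corollary~\ref{cor:strongconvexfundamental} and telescope the resulting inequality over $i = 0, \ldots, k$. As in the proof of Theorem~\ref{thm:FDRSergodic}, I will set $\eta_i := 2/\lambda_i - 1 > 0$ (positive thanks to $\lambda_i < 1/\alphaFDRSV - \varepsilon^2 < 2$ from~\eqref{eq:lambdaboundwithepsilon}) and apply Young's inequality~\eqref{eq:young} so that $\eta_i\|z^{i+1} - z^i\|^2$ exactly cancels $(2/\lambda_i - 1)\|z^{i+1}-z^i\|^2$, leaving the residual $(\gamma^2/\eta_i)\|\nabla h(x_h^i) - \nabla h(x^\ast)\|^2$. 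Since $1/\eta_i \leq \lambda_i/\varepsilon^2$ by~\eqref{eq:lambdaboundwithepsilon}, I can invoke the gradient summability~\eqref{eq:gradientsum} to bound the total residual by $\gamma(1+\varepsilon)/(\varepsilon^3(2\beta_V-\gamma))\,\|z^0 - z^\ast\|^2$. Combined with the telescoping Fej\'er term, this produces the master inequality
\begin{align*}
4\gamma \sum_{i=0}^k \lambda_i\bigl(S_f(x_f^i, x^\ast) + S_h(x_h^i, x^\ast)\bigr) \leq \left(1 + \frac{(1+\varepsilon)\gamma}{\varepsilon^3(2\beta_V-\gamma)}\right)\|z^0 - z^\ast\|^2.
\end{align*}
Part~1 then follows by $\min_{0 \leq j \leq k}(\cdot) \leq (1/\Lambda_k)\sum \lambda_i(\cdot) \leq 1/(\underline\lambda(k+1))\sum\lambda_i(\cdot)$; the $o(1/(k+1))$ refinement comes from the standard pigeonhole argument applied to the half-tail $[\lceil k/2\rceil, k]$, exploiting that the full series is summable and its tail vanishes. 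Part~2 follows by applying Jensen's inequality to the convex maps $y \mapsto (\mu_f/2)\|y-x^\ast\|^2 \leq S_f(y, x^\ast)$ and $y \mapsto (\mu_h/2)\|y-x^\ast\|^2 \leq S_h(y, x^\ast)$ and dividing by $4\gamma\Lambda_k$.

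For Part~3 the summation strategy is no longer sharp enough, so I will instead exploit the freedom to evaluate $\TFDRS$ at a relaxation parameter unrelated to $\lambda_k$. Concretely, I will apply Corollary~\ref{cor:strongconvexfundamental} with $z = z^k$ and $\lambda = 1$, writing $z_1 := \TFDRS(z^k)$; crucially, $x_f^k$ and $x_h^k$ only depend on $z^k$, not on $\lambda_k$, so the same $S_f, S_h$ appear on the left. The cosine rule~\eqref{eq:cosinerule} rewrites $\|z^k - z^\ast\|^2 - \|z_1 - z^\ast\|^2 + (1-2)\|z_1 - z^k\|^2 = 2\dotp{z^k - z_1, z_1 - z^\ast}$. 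Estimating this via Cauchy--Schwarz with $\|z_1 - z^\ast\| \leq \|z^0 - z^\ast\|$ (Fej\'er monotonicity, Part~\ref{thm:FDRSfacts:part:fejer} of Theorem~\ref{thm:FDRSfacts}), and using Baillon--Haddad to bound $\|\nabla h(x_h^k) - \nabla h(x^\ast)\| = \|\nabla h(z^k) - \nabla h(z^\ast)\| \leq (1/\beta_V)\|z^0 - z^\ast\|$, produces
\begin{align*}
4\gamma\bigl(S_f(x_f^k, x^\ast) + S_h(x_h^k, x^\ast)\bigr) \leq 2(1 + \gamma/\beta_V)\,\|z^0 - z^\ast\|\,\|z^k - z_1\|.
\end{align*}
Both the explicit $O(1/\sqrt{k+1})$ rate and the $o(1/\sqrt{k+1})$ refinement follow from Part~\ref{thm:FDRSfacts:part:convergenceFPR} of Theorem~\ref{thm:FDRSfacts} applied to $\|z^k - z_1\| = \|\TFDRS z^k - z^k\|$.

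The main obstacle is the bookkeeping of the cross term $2\gamma\dotp{\nabla h(x_h^i) - \nabla h(x^\ast), z^i - z^{i+1}}$: the choice of splitting is different for Parts~1--2 (via Young's inequality with $\eta_i = 2/\lambda_i - 1$ to achieve exact cancellation of the negative FPR term) versus Part~3 (via Cauchy--Schwarz at $\lambda = 1$, where the $(1-2/\lambda)\|z_1-z^k\|^2$ term is kept as is and merged algebraically with the Fej\'er difference). Getting both the explicit constants and the $o$-type improvements right requires carefully matching these choices against the gradient summability~\eqref{eq:gradientsum} and the FPR rate~\eqref{thm:FDRSfacts:part:convergenceFPR:eq}.
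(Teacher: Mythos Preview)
Your proposal is correct and follows the paper's proof essentially line for line: the same choice $\eta_i = 2/\lambda_i - 1$ with Young's inequality and $1/\eta_i \leq \lambda_i/\varepsilon^2$ to feed~\eqref{eq:gradientsum} for Parts~1--2, and the same $\lambda = 1$ evaluation of Corollary~\ref{cor:strongconvexfundamental} combined with the cosine rule and the FPR rate~\eqref{thm:FDRSfacts:part:convergenceFPR:eq} for Part~3. The only cosmetic difference is that the paper writes Part~3 as an infimum over $\lambda$ before specializing to $\lambda = 1$, and cites \cite[Lemma~3]{davis2014convergence} for the $o(1/(k+1))$ conclusion where you spell out the half-tail pigeonhole.
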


\begin{remark}
See Section~\ref{sec:slowrates} for a proof that the nonergodic ``best" rates are sharp.  It is not clear if we can improve the general nonergodic rates to $o(1/(k+1))$.
\end{remark}


\section{Lipschitz differentiability}

In this section, we assume $f$ is smooth:

\begin{assump}\label{assump:fdifferentiable}
 $f$ is differentiable and $\nabla f$ is $(1/\beta_f)$-Lipschitz where $\beta_f > 0$.
\end{assump}

Under Assumption~\ref{assump:fdifferentiable}, we will show that the objective value 
\begin{align*}
f(x_h^k) + h(x_h^k) - f(x^\ast) - h(x^\ast) = f(x_h^k) + g(x_h^k) - f(x^\ast) - g(x^\ast)
\end{align*}
is summable. Therefore, by \cite[Lemma 3]{davis2014convergence} the minimal objective error after $k$ iterations is of order $o(1/(k+1))$. We will need the following upper bound to prove this. See Appendix~\ref{app:eq:newfundamentalinequality} for the proof. 
\begin{proposition}[Fundamental inequality under Assumption~\ref{assump:fdifferentiable}]\label{eq:newfundamentalinequality}
If $\gamma \in (0, 2\beta_V)$, $\lambda > 0$,  $z \in \cH$,  $z^+ := (\TFDRS)_{\lambda}(z)$,  $z^\ast$ is a fixed-point of $\TFDRS$, and $x^\ast = P_Vz^\ast$, then
\begin{align*}
&2\gamma\lambda(f(x_h) + h(x_h) - f(x^\ast) - g(x^\ast)) \\
&\leq \begin{cases}
\|z - z^\ast\|^2 - \|z^+ - z^\ast\|^2 +   \left(1 + \frac{\gamma - \beta_f}{\beta_f\lambda}\right)\|z-z^+\|^2 \\
+ 2\gamma \dotp{ \nabla h(x_h) - \nabla h(x^\ast), z - z^{+}} & \text{if } \gamma \leq \beta_f \\
\left(1 + \frac{\gamma - \beta_f}{2\beta_f}\right) (\|z - z^\ast\|^2 - \|z^+ - z^\ast\|^2 +   \|z-z^+\|^2) &  \\
+ 2\gamma\left(1 + \frac{\gamma - \beta_f}{2\beta_f}\right)\dotp{ \nabla h(x_h) - \nabla h(x^\ast), z - z^{+}} & \text{if } \gamma > \beta_f.
\end{cases} \numberthis \label{eq:lipshitzfundamentalinequality}
\end{align*}
\end{proposition}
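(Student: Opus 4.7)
The plan is to derive the desired bound on $2\gamma\lambda(f(x_h) + h(x_h) - f(x^\ast) - g(x^\ast))$ by starting from the upper fundamental inequality of Proposition~\ref{prop:FDRSupper} applied at $x = x^\ast$ (which controls $f(x_f) + h(x_h)$ rather than $f(x_h) + h(x_h)$) and then upgrading the evaluation of $f$ from $x_f$ to $x_h$ via the descent lemma for the $(1/\beta_f)$-Lipschitz gradient $\nabla f$.

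First I would convert the $\|z - x^\ast\|^2 - \|z^+ - x^\ast\|^2$ appearing in Proposition~\ref{prop:FDRSupper} into the $z^\ast$-centered difference $\|z - z^\ast\|^2 - \|z^+ - z^\ast\|^2$, using $z^\ast - x^\ast = \gamma\tnabla\chi_V(x^\ast)$ from Lemma~\ref{lem:FDRSoptimality}; by the optimality relation $\tnabla\chi_V(x^\ast) = -\nabla f(x^\ast) - \nabla h(x^\ast)$, this produces the correction $-2\gamma\dotp{z - z^+, \nabla f(x^\ast) + \nabla h(x^\ast)}$. Next, the descent lemma $f(x_h) \leq f(x_f) + \dotp{\nabla f(x_f), x_h - x_f} + (1/(2\beta_f))\|x_h - x_f\|^2$, combined with the identity $x_h - x_f = (z - z^{+})/\lambda$ from Lemma~\ref{lem:FDRSidentities}, contributes $2\gamma\dotp{\nabla f(x_f), z - z^{+}}$ and $(\gamma/(\beta_f\lambda))\|z - z^+\|^2$ after multiplication by $2\gamma\lambda$. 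Adding everything, the four linear terms collapse into the two centered inner products $2\gamma\dotp{\nabla h(x_h) - \nabla h(x^\ast), z - z^+}$ and $2\gamma\dotp{\nabla f(x_f) - \nabla f(x^\ast), z - z^+}$.

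To eliminate the unwanted $f$-centered inner product, I would apply Young's inequality with parameter $c = \lambda\beta_f$ together with the Baillon--Haddad slack $2\gamma\lambda S_f(x_f, x^\ast) \geq \gamma\lambda\beta_f\|\nabla f(x_f) - \nabla f(x^\ast)\|^2$ that Proposition~\ref{prop:FDRSupper} places on the left-hand side, and discard the non-negative $S_h(x_h, x^\ast)$ slack. The gradient-squared contributions cancel exactly, leaving an extra $(\gamma/(\lambda\beta_f))\|z-z^+\|^2$ on the right-hand side, so the total coefficient of $\|z-z^+\|^2$ is $1 - 2/\lambda + 2\gamma/(\lambda\beta_f) = 1 + 2(\gamma - \beta_f)/(\lambda\beta_f)$. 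A one-line calculation shows this is bounded above by the target $1 + (\gamma - \beta_f)/(\beta_f\lambda)$ precisely when $\gamma \leq \beta_f$, yielding case~1.

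The hard part will be case~2 ($\gamma > \beta_f$), in which the Young parameter $c = \lambda\beta_f$ overshoots the target coefficient and the uniform multiplier $M := 1 + (\gamma - \beta_f)/(2\beta_f)$ must appear on the full right-hand side, including the inner product. Rewriting the target via the cosine rule gives the factorized form $2M\dotp{z - z^+,\, (z - z^\ast) + \gamma(\nabla h(x_h) - \nabla h(x^\ast))}$, which is $\lambda$-free, suggesting that for $\gamma > \beta_f$ the Young parameter should be chosen proportional to $\beta_f$ (rather than $\lambda\beta_f$). This leaves a positive residual of $\|\nabla f(x_f) - \nabla f(x^\ast)\|^2$ that I expect to be controlled by invoking the sharper co-coercivity bound $\dotp{\nabla f(x_f) - \nabla f(x^\ast), x_f - x^\ast} \geq \beta_f\|\nabla f(x_f) - \nabla f(x^\ast)\|^2$ together with $\|x_f - x^\ast\| \leq \|z - z^\ast\|$ (Fej\'er monotonicity, Part~\ref{thm:FDRSfacts:part:fejer} of Theorem~\ref{thm:FDRSfacts}), with the excess absorbed into the telescoping block $\|z - z^\ast\|^2 - \|z^+ - z^\ast\|^2$ through the $M$ multiplier.
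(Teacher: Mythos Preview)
Your plan for case~1 ($\gamma \leq \beta_f$) is correct, though slightly lossier than the paper's argument: you use Young's inequality to kill the cross term $2\gamma\dotp{\nabla f(x_f) - \nabla f(x^\ast), z - z^+}$ against the $S_f$ slack, landing on the coefficient $1 + 2(\gamma - \beta_f)/(\beta_f\lambda)$, which is indeed at most the target $1 + (\gamma - \beta_f)/(\beta_f\lambda)$ when $\gamma \leq \beta_f$. The paper instead expands $2\dotp{z - z^+, z^\ast - x^\ast} + 2\gamma\lambda\dotp{x_h - x_f, \nabla f(x_f)}$ exactly via the cosine rule~\eqref{eq:cosinerule}, obtaining the sharp coefficient $1 + (\gamma - \beta_f)/(\beta_f\lambda)$ together with an explicit residual $\gamma\lambda(\gamma - \beta_f)\|\nabla f(x_f) - \nabla f(x^\ast)\|^2$. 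For case~1 both routes work; the exact identity matters for case~2.

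Your case~2 plan has a genuine gap. Bounding the residual $\|\nabla f(x_f) - \nabla f(x^\ast)\|^2$ via co-coercivity and the Fej\'er bound $\|x_f - x^\ast\| \leq \|z - z^\ast\|$ yields at best a term of the form $C\|z - z^\ast\|^2$, and such a one-sided term \emph{cannot} be absorbed into the telescoping block: $M(\|z - z^\ast\|^2 - \|z^+ - z^\ast\|^2) + C\|z - z^\ast\|^2$ has mismatched coefficients $(M + C)$ and $M$, so it fails to telescope when you later sum over iterations in Theorem~\ref{thm:lipschitzbest}. The paper handles this residual by invoking Corollary~\ref{cor:strongconvexfundamental} (equation~\eqref{eq:strongconvexupper}), which already packages $S_f(x_f, x^\ast) \geq (\beta_f/2)\|\nabla f(x_f) - \nabla f(x^\ast)\|^2$ in the full telescoping-plus-inner-product form. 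Multiplying~\eqref{eq:strongconvexupper} by $(\gamma - \beta_f)/(2\beta_f)$ and adding it to the intermediate bound produces exactly the factor $M = 1 + (\gamma - \beta_f)/(2\beta_f)$ on every term, because the $\|z - z^+\|^2$ coefficients combine as $(1 + (\gamma - \beta_f)/(\beta_f\lambda)) + ((\gamma - \beta_f)/(2\beta_f))(1 - 2/\lambda) = M$. Note that this cancellation requires the sharp coefficient from the cosine-rule identity; your Young-based intermediate, with its doubled $2(\gamma - \beta_f)/(\beta_f\lambda)$, would not close.
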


The next theorem shows that the upper bound in Proposition~\ref{eq:newfundamentalinequality} is summable and, as a consequence, we will have $o(1/(k+1))$ convergence.

\begin{theorem}[Convergence rates under Assumption~\ref{assump:fdifferentiable}]\label{thm:lipschitzbest}
Let $\gamma \in (0, 2\beta_V)$, let $\varepsilon \in (0, 1)$, and suppose  $(\lambda_j)_{j \geq 0}$ satisfies~\eqref{eq:lambdainclusion}. Suppose that $\underline{\tau} := \inf_{j \geq 0}\{ (1-\alphaFDRSV\lambda_j)\lambda_j/\alphaFDRSV\} > 0$ and let $\underline{\lambda} := \inf_{j \geq 0} \lambda_j  > 0$. Let $z^0 \in \cH$, let $z^\ast$ be a fixed-point of $\TFDRS$, and let $x^\ast := P_Vz^\ast$. Then
$$\min_{0 \leq j \leq k} \left(f(x_h^{j}) + h(x_h^{j}) - f(x^\ast) - h(x^\ast)\right) = o\left(\frac{1}{k+1}\right).$$
 \end{theorem}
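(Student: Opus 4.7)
The plan is to show that $\sum_{k=0}^\infty \lambda_k e_k < \infty$, where $e_k := f(x_h^k) + h(x_h^k) - f(x^\ast) - h(x^\ast) \geq 0$ (the error is nonnegative because $x_h^k \in V$). Combined with $\underline{\lambda} > 0$, this gives $\sum_k e_k < \infty$, and then the standard Lemma~3 of~\cite{davis2014convergence} (a nonnegative summable sequence has $\min_{0 \leq j \leq k} e_j = o(1/(k+1))$) yields the conclusion.

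First I would apply the fundamental inequality~\eqref{eq:lipshitzfundamentalinequality} of Proposition~\ref{eq:newfundamentalinequality} at $z = z^k$, $\lambda = \lambda_k$, $z^+ = z^{k+1}$, splitting the argument into the two cases $\gamma \leq \beta_f$ and $\gamma > \beta_f$. In each case the right-hand side has three ingredients: a telescoping pair $\|z^k - z^\ast\|^2 - \|z^{k+1}-z^\ast\|^2$, a multiple of the FPR $\|z^{k+1}-z^k\|^2$, and a cross term $2\gamma\dotp{\nabla h(x_h^k) - \nabla h(x^\ast), z^k - z^{k+1}}$. The coefficient multiplying $\|z^{k+1}-z^k\|^2$ is uniformly bounded: the upper bound $\bar{\lambda} := \sup_j \lambda_j \leq (1-\varepsilon)(1+\varepsilon\alphaFDRSV)/\alphaFDRSV$ from~\eqref{eq:lambdainclusion} gives $\bar{\lambda} < 1/\alphaFDRSV$, and $\underline{\lambda} > 0$ keeps $1/\lambda_k$ bounded.

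Next I would handle the cross term via Young's inequality~\eqref{eq:young} with weight $\alpha = \lambda_k$:
\begin{equation*}
2\gamma|\dotp{\nabla h(x_h^k) - \nabla h(x^\ast),\, z^k - z^{k+1}}| \;\leq\; \gamma\lambda_k\|\nabla h(x_h^k) - \nabla h(x^\ast)\|^2 \;+\; \frac{\gamma}{\lambda_k}\|z^{k+1}-z^k\|^2.
\end{equation*}
This is the key splitting. Summing in $k$, the first piece is finite by Part~\ref{thm:FDRSfacts:part:gradientsum} of Theorem~\ref{thm:FDRSfacts} applied to the bound~\eqref{eq:gradientsum} (using that $\nabla h(x_h^k) = \nabla h(z^k)$ because $h = g\circ P_V$ implies $\nabla h = P_V\circ \nabla g \circ P_V$, which is invariant under replacing its argument by $P_V$ of itself); the second piece is finite because $(1-\lambda_k\alphaFDRSV)/(\lambda_k\alphaFDRSV) \geq (1-\bar{\lambda}\alphaFDRSV)/(\bar{\lambda}\alphaFDRSV) > 0$, so Part~\ref{thm:FDRSfacts:part:sumFPR} yields $\sum_k \|z^{k+1}-z^k\|^2 < \infty$, after which $1/\lambda_k \leq 1/\underline{\lambda}$ completes the bound.

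Summing the fundamental inequality in $k$, the telescoping contributes at most $\|z^0 - z^\ast\|^2$ (times the case-dependent constant $(1 + (\gamma-\beta_f)/(2\beta_f))$ when $\gamma > \beta_f$), the FPR multiples are bounded by the arguments above, and the cross-term sum is bounded as just shown. Therefore $\sum_k 2\gamma \lambda_k e_k < \infty$, hence $\sum_k e_k \leq (2\gamma\underline{\lambda})^{-1}\cdot(\text{finite constant}) < \infty$. Applying \cite[Lemma 3]{davis2014convergence} to the nonnegative summable sequence $(e_j)_{j \geq 0}$ yields $\min_{0\leq j\leq k} e_j = o(1/(k+1))$. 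The main obstacle is really just bookkeeping the coefficients in the two cases of~\eqref{eq:lipshitzfundamentalinequality}; once the cross term is tamed by Young's inequality with $\alpha = \lambda_k$, everything reduces to the previously established summability facts (Parts~\ref{thm:FDRSfacts:part:sumFPR} and~\ref{thm:FDRSfacts:part:gradientsum} of Theorem~\ref{thm:FDRSfacts}).
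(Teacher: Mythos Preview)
Your proposal is correct and follows essentially the same route as the paper: sum the fundamental inequality~\eqref{eq:lipshitzfundamentalinequality}, control the cross term by Young's inequality with weight $\lambda_k$ so that one piece is handled by the gradient-summability bound~\eqref{eq:gradientsum} and the other by the FPR-summability of Part~\ref{thm:FDRSfacts:part:sumFPR}, and then invoke \cite[Lemma 3]{davis2014convergence}. The paper's Young split reads $2\gamma\dotp{\cdot,\cdot}\le \lambda_i\gamma^2\|\nabla h(x_h^i)-\nabla h(x^\ast)\|^2+(1/\lambda_i)\|z^i-z^{i+1}\|^2$ rather than your $\gamma\lambda_k\|\cdot\|^2+(\gamma/\lambda_k)\|\cdot\|^2$, but this is only a cosmetic redistribution of the $\gamma$ factor and leads to the same summability conclusion.
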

\begin{proof}
Let $\delta := \inf_{j \geq 0} \left\{(1-\lambda_j\alphaFDRSV)/(\lambda_j\alphaFDRSV)\right\}$. Note that $0 < \delta < \infty$ because $\underline{\tau} > 0$. Now, recall that, by Part~\ref{thm:FDRSfacts:part:sumFPR} of Theorem~\ref{thm:FDRSfacts}, we have
\begin{align*}
 \sum_{i=0}^\infty \| z^{i+1} - z^i\|^2 \leq  \frac{1}{\delta} \sum_{i=0}^\infty \frac{1-\lambda_i\alphaFDRSV}{\lambda_i\alphaFDRSV}\| z^{i+1} - z^i\|^2 \leq \frac{1}{\delta} \|z^0 - z^\ast\|^2.
\end{align*}
Next, we use the Cauchy-Schwarz inequality and~\eqref{eq:young} to show that
\begin{align*}
\sum_{i=0}^\infty  2\gamma \dotp{ \nabla h(x_h^i) - \nabla h(x^\ast), z^i - z^{i+1}}  &\leq \sum_{i=0}^\infty \left(\lambda_i \gamma^2\|\nabla h(x_h^i) - \nabla h(x^\ast)\|^2 + \frac{1}{\lambda_i} \|z^i- z^{i+1}\|^2\right) \\
&\stackrel{\eqref{eq:gradientsum}}{\leq} \left( \frac{(1+\varepsilon)\gamma}{\varepsilon(2\beta_V-\gamma)} + \frac{1}{\underline{\lambda}\delta}  \right)\|z^0 - z^\ast\|^2.
\end{align*}
If we combine the previous two sum bounds with~\eqref{eq:lipshitzfundamentalinequality}, we get
\begin{align*}
&\sum_{i=0}^\infty (f(x_h^i) + h(x_h^i) - f(x^\ast) - h(x^\ast)) \\
&\leq \frac{\left(1 + \frac{1}{\delta} + \frac{(1+\varepsilon)\gamma}{\varepsilon(2\beta_V-\gamma)} + \frac{1}{\underline{\lambda}\delta}\right)\|z^0 - z^\ast\|^2}{2\gamma\underline{\lambda}} \times \begin{cases} 
1& \text{if } \gamma \leq \beta_f; \\
\left(1 + \frac{\gamma - \beta_f}{2\beta_f}\right) & \text{if } \gamma > \beta_f.
\end{cases}
\end{align*}
The convergence rate now follows from \cite[Lemma 3]{davis2014convergence}.
\end{proof}

\begin{remark}
Theorem~\ref{thm:lipschitzbest} is sharp under Assumption~\ref{assump:fdifferentiable}~\cite[Theorem 12]{davis2014convergence}.
\end{remark}

\section{Linear convergence}

In this section, we prove FDRS converges linearly when $\beta_f(\mu_g + \mu_f) > 0$. 

\begin{theorem}[Linear convergence]\label{thm:linearconvergence}
Let $\gamma \in (0, 2\beta_V)$, let $(\lambda_j)_{j \geq 0} \subseteq (0, 1/\alphaFDRSV)$, let $z^0 \in \cH$, let $z^\ast$ be a fixed-point of $\TFDRS$, and let $x^\ast := P_Vz^\ast$. Let $c > 1/2$, let $\gamma < \beta_V/c$, and let $(\lambda_j)_{j \geq 0} \subseteq (0,  (2c - 1)/c)$.  For all $\lambda \in (0, (2c - 1)/c)$, define
\begin{align*}
C_1(\lambda) &:= \left(1 - \frac{\lambda}{3}\min\left\{\frac{\gamma\mu_g}{(1+\gamma/\beta_V)^2}, \frac{\beta_f}{\gamma}, \frac{2c-1}{c} - \lambda \right\}\right)^{1/2}; \\
C_2(\lambda) &:= \left(1 - \frac{\lambda}{3}\min\left\{\frac{\gamma \mu_f}{(1+\gamma/\beta_f)^2}, \frac{\beta_V - c\gamma}{\gamma}, \frac{1}{4}\left(\frac{2c-1}{c} - \lambda\right) \right\}\right)^{1/2}.
\end{align*}

Then for all $k \geq 0$, we have
\begin{align*}
\|z^{k+1} - z^\ast\| &\leq  \|z^{k} - z^\ast\|\times \begin{cases}
C_1(\lambda_k) & \text{if } \mu_g\beta_f > 0;\\
C_2(\lambda_k) & \text{if } \mu_f\beta_f > 0;
\end{cases}\numberthis \label{eq:contraction}\\
\|z^{k+1} - z^\ast\| &\leq \|z^0 - z^\ast\| \times\begin{cases}
\prod_{i=0}^kC_1(\lambda_i) & \text{if } \mu_g\beta_f > 0;\\
\prod_{i=0}^k C_2(\lambda_i) & \text{if } \mu_f\beta_f > 0.
\end{cases}
\end{align*}
\end{theorem}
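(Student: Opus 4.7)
The plan is to start from Corollary~\ref{cor:strongconvexfundamental}, which already provides the descent-type inequality
\begin{align*}
4\gamma\lambda_k(S_f(x_f^k, x^\ast) + S_h(x_h^k, x^\ast)) &\leq \|z^k - z^\ast\|^2 - \|z^{k+1} - z^\ast\|^2 + (1-2/\lambda_k)\|z^{k+1} - z^k\|^2 \\
&\quad + 2\gamma\dotp{\nabla h(x_h^k) - \nabla h(x^\ast), z^k - z^{k+1}},
\end{align*}
and to show that the left-hand side, together with the negative FPR term, dominates a positive multiple $\rho\|z^k - z^\ast\|^2$. The whole argument reduces to finding a lower bound $\rho(\lambda_k)$ for this multiple that agrees with $1-C_i(\lambda_k)^2$.

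First I would decompose $\|z^k - z^\ast\|^2$ orthogonally with respect to $V$. Using $x_h^k = P_V z^k$, $x^\ast = P_V z^\ast$ (Lemma~\ref{lem:FDRSoptimality}), and applying the second identity in~\eqref{eq:pointsidentity} at both $z^k$ and $z^\ast$, the $V^\perp$ part $\gamma(\tnabla\chi_V(x_h^k) - \tnabla\chi_V(x^\ast))$ can be rewritten as $-(x_f^k - x_h^k) - \gamma(\nabla f(x_f^k) - \nabla f(x^\ast)) - \gamma(\nabla h(x_h^k) - \nabla h(x^\ast))$. Combining with the Lipschitz bound $\|\nabla h(x_h^k) - \nabla h(x^\ast)\|\le (1/\beta_V)\|x_h^k - x^\ast\|$, then applying the triangle and Cauchy--Schwarz inequalities, gives
\begin{align*}
\|z^k - z^\ast\|^2 \le 3(1+\gamma/\beta_V)^2 \|x_h^k - x^\ast\|^2 + 3\|x_f^k - x_h^k\|^2 + 3\gamma^2\|\nabla f(x_f^k) - \nabla f(x^\ast)\|^2.
\end{align*}
The factor $3$ and the coefficient $(1+\gamma/\beta_V)^2$ are precisely those that appear in $C_1$, which motivates the factor $1/3$ in the definition of $C_1(\lambda)$.

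Next I would handle the cross term using Young's inequality $2\gamma\dotp{\nabla h(x_h^k) - \nabla h(x^\ast), z^k - z^{k+1}} \le (\gamma^2/\eta)\|\nabla h(x_h^k) - \nabla h(x^\ast)\|^2 + \eta\|z^k - z^{k+1}\|^2$ with a weight $\eta$ chosen so that $\eta < 2/\lambda_k - 1$, and couple it with the cocoercivity lower bound $S_h \ge (\beta_V/2)\|\nabla h(x_h^k) - \nabla h(x^\ast)\|^2$. The hypotheses $c > 1/2$ and $c\gamma < \beta_V$ make the choice $\eta = c\gamma/(\lambda_k\beta_V)$ (or a similar tuning) admissible, leaving a residual $\|z^{k+1}-z^k\|^2$ coefficient of order $(2c-1)/(c\lambda_k) - 1$, which is exactly the surplus $(2c-1)/c - \lambda_k$ per step.

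Finally I would split $4\gamma\lambda_k(S_f + S_h)$ into three portions, each matched against one of the three summands in the bound on $\|z^k - z^\ast\|^2$. In Case 1 ($\mu_g\beta_f>0$), use $S_h \ge (\mu_g/2)\|x_h^k - x^\ast\|^2$ for the first term (yielding $\gamma\mu_g/(1+\gamma/\beta_V)^2$), use $S_f \ge (\beta_f/2)\|\nabla f(x_f^k) - \nabla f(x^\ast)\|^2$ for the third (yielding $\beta_f/\gamma$), and use the FPR surplus for $3\|x_f^k - x_h^k\|^2 = (3/\lambda_k^2)\|z^{k+1}-z^k\|^2$ (yielding $(2c-1)/c - \lambda_k$); taking the minimum across the three gives $1 - C_1(\lambda_k)^2$ and therefore the claimed contraction. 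In Case 2 ($\mu_f\beta_f>0$) I would first reorganize the bound on $\|z^k-z^\ast\|^2$ around $\|x_f^k - x^\ast\|$ via $\|x_h^k - x^\ast\| \le \|x_f^k - x^\ast\| + \|x_f^k - x_h^k\|$, giving $3(1+\gamma/\beta_f)^2\|x_f^k - x^\ast\|^2$ plus FPR plus $\gamma^2\|\nabla h(x_h^k) - \nabla h(x^\ast)\|^2$ terms; the margin $\beta_V - c\gamma > 0$ in the cocoercivity absorption produces the $(\beta_V - c\gamma)/\gamma$ entry of $C_2$, while the extra triangle-inequality cost is responsible for the factor $1/4$ on the FPR surplus. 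The product bound on $\|z^{k+1} - z^\ast\|$ is then immediate by induction from~\eqref{eq:contraction}.

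The main obstacle will be the bookkeeping: choosing the Young's inequality weight and the three-way split of the $S_f + S_h$ budget so that the resulting contraction constant equals the minimum of the three quantities in $C_1$ (respectively $C_2$) rather than an arbitrary convex combination. Tracking how the constant $c$ simultaneously governs the cross-term absorption, the cocoercivity margin $\beta_V - c\gamma$, and the admissible range of $\lambda$ is the subtlety; once balanced, the remaining manipulations are elementary.
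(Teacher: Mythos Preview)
Your proposal is correct and follows essentially the same route as the paper: start from Corollary~\ref{cor:strongconvexfundamental}, absorb the cross term via Young's inequality (the paper uses the weight $c\lambda_k$, giving exactly the residual FPR coefficient $1-(2c-1)/(c\lambda_k)$), then decompose $\|z^k-z^\ast\|^2$ through the identities of Lemma~\ref{lem:FDRSidentities} into three pieces matched against $\mu_g\|x_h^k-x^\ast\|^2$, $\beta_f\|\nabla f(x_f^k)-\nabla f(x^\ast)\|^2$, and the FPR surplus (Case~1), or against $\mu_f\|x_f^k-x^\ast\|^2$, $(\beta_V-c\gamma)\|\nabla h(x_h^k)-\nabla h(x^\ast)\|^2$, and the FPR surplus (Case~2). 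One small point: in Case~2 the paper does not use the triangle inequality $\|x_h^k-x^\ast\|\le\|x_f^k-x^\ast\|+\|x_f^k-x_h^k\|$ but rather the exact identity $z^k = x_f^k - \gamma\nabla h(x_h^k) - \gamma\nabla f(x_f^k) + (2/\lambda_k)(z^k-z^{k+1})$, and the factor $1/4$ in $C_2$ comes from the squared $2/\lambda_k$ in that identity, not from a triangle-inequality loss.
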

\begin{proof}
~\eqref{eq:strongconvexupper} shows that for all $k \geq 0$, we have
\begin{align*}
&\gamma \lambda_k\mu_f\|x_f^k - x^\ast\|^2 + \gamma \lambda_k \beta_f \|\nabla f(x_f^k) - \nabla f(x^\ast)\|^2 \\
&+ \gamma\lambda_k\mu_g \|x_h^k - x^\ast\|^2 + \gamma\lambda_k\beta_V\|\nabla h(x_h^k) - \nabla h(x^\ast)\|^2  \\
&\leq \|z^k - z^\ast\|^2 - \|z^{k+1} - z^\ast\|^2 + \left(1-\frac{2}{\lambda_k}\right) \| z^{k+1} - z^k\|^2 \\
&+ 2\gamma \dotp{ \nabla h(x_h^k) - \nabla h(x^\ast), z^k - z^{k+1}}.
\end{align*}
In addition, by the Cauchy-Schwarz inequality and~\eqref{eq:young}, we have
\begin{align*}
2\gamma \dotp{ \nabla h(x_h^k) - \nabla h(x^\ast), z^k - z^{k+1}} &\leq c\gamma^2 \lambda_k\|\nabla h(x_h^k) - \nabla h(x^\ast)\|^2 + \frac{1}{c\lambda_k} \|z^k - z^{k+1}\|^2.
\end{align*}
Therefore, for all $k \geq 0$, 
\begin{align*}
&\gamma \lambda_k\mu_f\|x_f^k - x^\ast\|^2 + \gamma \lambda_k \beta_f \|\nabla f(x_f^k) - \nabla f(x^\ast)\|^2 \\
&+ \gamma\lambda_k\mu_g \|x_h^k - x^\ast\|^2 + \gamma\lambda_k(\beta_V - c\gamma)\|\nabla h(x_h^k) - \nabla h(x^\ast)\|^2  \\
&\leq \|z^k - z^\ast\|^2 - \|z^{k+1} - z^\ast\|^2 + \left(1-\frac{2c - 1}{c\lambda_k}\right) \| z^{k+1} - z^k\|^2.
\end{align*}
Recall that we assume $1-(2c - 1)/(c\lambda_k) < 0$ and $\beta_V - c\gamma > 0$.

Now suppose that $\beta_f\mu_g > 0$.  The following identity follows from from Lemma~\ref{lem:FDRSidentities}:
\begin{align*}
z^k = \TFDRS(z^k)+ (z^k - \TFDRS(z^k)) = x_h^k - \gamma \nabla h(x_h^k) - \gamma \nabla f(x_f^k) + \frac{1}{\lambda_k}(z^k - z^{k+1}).
\end{align*}
This identity results from tracing the perimeter of Figure~\ref{fig:FDRSsquare} from $x_h$ to $x_f$ to $\TFDRS z^k$ to $z^k$. Likewise, we have $z^\ast = x^\ast - \gamma \nabla h(x^\ast) - \gamma \nabla f(x^\ast)$. 

Note that
\begin{align*}
\|(x_h^k - \gamma \nabla h(x_h^k)) - (x^\ast - \gamma \nabla h(x^\ast))\| &\leq \|x_h^k - x^\ast\| + \gamma\|\nabla h(x_h^k) - \nabla h(x^\ast)\| \\
&\leq (1 + \gamma/\beta_V)\|x_h^k - x^\ast\|.\numberthis\label{eq:Iminusnablahcontraction}
\end{align*}
Now, fix $k \geq 0$, and let  $C_1' := 3\max\left\{ (1+\gamma/\beta_V)^2/(\gamma\lambda_k\mu_g), \gamma^2/(\gamma \lambda_k\beta_f), (1/\lambda_k^2)\left(\frac{2c - 1}{c\lambda_k}-1\right)^{-1}\right\}.$ By the convexity of $\|\cdot\|^2$, we have
\begin{align*}
&\|z^{k} - z^\ast\|^2 \leq 3(1+\gamma/\beta_V)^2\|x_h^k - x^\ast\|^2 + 3\gamma^2\|\nabla f(x_f^k) - \nabla f(x^\ast)\|^2 + \frac{3}{\lambda_k^2}\|z^{k+1} - z^k\|^2 \\
&\leq C_1' \biggl(\gamma\lambda_k\mu_g \|x_h^k - x^\ast\|^2 + \gamma \lambda_k \beta_f \|\nabla f(x_f^k) - \nabla f(x^\ast)\|^2 +\left(\frac{2c - 1}{c\lambda_k} -1\right)\|z^{k+1} - z^k\|^2\biggr) \\
&\leq C_1'   \|z^k - z^\ast\|^2 - C_1'\|z^{k+1} - z^\ast\|^2.
\end{align*}
 Therefore, $\|z^{k+1} - z^\ast\| \leq \left(1 - (1/C_1')\right)^{1/2}\|z^{k} - z^\ast\|.$

Now assume that $\beta_f\mu_f > 0$. Observe that:
\begin{align*}
z^{k} &= x_h^k - \gamma \nabla h(x_h^k) - \gamma \nabla f(x_f^k) + \frac{1}{\lambda_k}(z^k - z^{k+1}) \\
&= x_f^k -  \gamma \nabla h(x_h^k) - \gamma \nabla f(x_f^k) + \frac{2}{\lambda_k}(z^k - z^{k+1})
\end{align*}
where we use the identity $x_h^k - x_f^k = (1/\lambda_k)(z^k - z^{k+1})$ (see~\eqref{eq:FPRidentity}). The proof of this case is similar to the case $\beta_f\mu_h > 0$ except that we use the above identity for $z^k$, the bound $\|(x_f^k - \gamma \nabla f(x_f^k)) - (x^\ast - \gamma \nabla f(x^\ast))\|^2\leq (1+\gamma/\beta_f)^2\|x_f^k - x^\ast\|^2$, and the constant $C_2' := 3\max\left\{ (1+\gamma/\beta_f)^2/(\gamma\lambda_k\mu_f), \gamma^2/(\gamma\lambda_k(\beta_V - c\gamma)), (4/\lambda_k^2)\left(\frac{2c - 1}{c\lambda_k} - 1\right)^{-1}\right\}$
in place of $C_1'$. Then the contraction $\|z^{k+1} - z^\ast\| \leq \left(1- 1/C_2'\right)^{1/2}\|z^k - z^\ast\|$ follows.

In both cases, the linear rate for $(z^j)_{j \geq 0}$ follows by unfolding~\eqref{eq:contraction}.
\end{proof}

\begin{remark}
Note that smaller $c$ lead to larger $\gamma$ and smaller $(\lambda_j)_{j \geq 0}$, while larger $c$ lead to smaller $\gamma$ and larger $(\lambda_j)_{j \geq 0}$. 
\end{remark}

\subsection{Arbitrarily slow convergence for strongly convex problems}\label{sec:slowrates}
In general, we cannot expect linear convergence of FDRS when $f$ is not differentiable---even if $f$ and $g$ are strongly convex. In this section, we construct an example to prove this claim.\cut{We also show that FDRS applied to this example converges \emph{arbitrarily slowly.}  } The following example is based on~\cite[Section 7]{bauschke2013rate} and~\cite[Example 1]{davis2014convergence}.

\subsubsection*{A family of slow examples}
Let $\cH := \ell_2^2(\vN) = \vR^2 \oplus \vR^2\oplus \cdots$. Let $R_{\theta}$ denote counterclockwise rotation in $\vR^2$ by $\theta$ degrees.  Let $e_0 := (1, 0)$ denote the standard unit vector, and let $e_{\theta} := R_\theta e_0$.  Let $(\theta_j)_{j\geq0}$be a sequence of angles in $(0, {\pi}/{2}]$ such that $\theta_i \rightarrow 0$ as $i \rightarrow \infty$.  For all $i \geq 0$, let $c_i := \cos(\theta_i)$. We let 
\begin{align}
V := \vR e_0 \oplus \vR e_0 \oplus \cdots  && \mathrm{and} && U := \vR e_{\theta_0} \oplus \vR e_{\theta_1} \oplus \cdots.
\end{align}
Note that \cite[Section 7]{bauschke2013rate} proves the projection identities
\begin{align*}
(P_U)_i &= \begin{bmatrix} \cos^2(\theta_i) & \sin(\theta_i)\cos(\theta_i) \\ \sin(\theta_i)\cos(\theta_i) & \sin^2(\theta_i) \end{bmatrix} && \mathrm{and} && (P_V)_i = \begin{bmatrix} 1 & 0 \\ 0 & 0\end{bmatrix},
\end{align*}

We now begin our extension of this example. Choose $a \geq 0$ and set $f := \chi_U + ({a}/{2})\|\cdot\|^2$ and $g := ({1}/{2})\|\cdot\|^2.$ Note that $\mu_g = 1$ and $\mu_f = a$.  In addition, for $h := g\circ P_V$, we have $(\nabla h(x))_i = (P_V \circ I_{\cH} \circ P_V)_i = (P_V)_i.$ Thus, $\nabla h$ is $1$-Lipschitz, and, hence, $\beta_V = 1$ and we can choose $\gamma = 1 < 2\beta_V$. Therefore, $\alphaFDRSV = 2\beta_V/(4\beta_V - \gamma) = 2/3$, so we can choose $\lambda_k \equiv 1 < 1/\alphaFDRSV$. We also note that $\prox_{\gamma f} = (1/(1+a)) P_U$. 

Define $N : \cH \rightarrow \cH$ on each 2-dimensional component of $\cH$ as follows: for all $i \geq 0$,
\begin{align*}
&(N)_i := \left(\frac{1}{2}I_{\cH} + \frac{1}{2}\refl_{\gamma f} \circ \refl_{\chi_V}\right)_i = \frac{1}{a+1}(P_U)_i(2(P_V)_i - I_{\vR^2}) + I_{\vR^2} - (P_V)_i \\
&= \frac{1}{a+1}(P_U)_i\begin{bmatrix} 1 & 0 \\ 0 & -1\end{bmatrix} + \begin{bmatrix} 0 & 0 \\ 0 & 1\end{bmatrix} = \frac{1}{a+1}\begin{bmatrix} \cos^2(\theta_i) & -\sin(\theta_i)\cos(\theta_i) \\ \sin(\theta_i)\cos(\theta_i) &  \cos^2(\theta_i) + a\end{bmatrix}
\end{align*}
where the second equality follows by direct expansion. Therefore, we have
\begin{align}\label{eq:TFDRSsubspace}
\TFDRS = N \circ (I - P_V) = \bigoplus_{i \geq 0} \; \frac{1}{a+1}\begin{bmatrix} 0 & -\sin(\theta_i)\cos(\theta_i) \\ 0 &\cos^2(\theta_i) + a \end{bmatrix}.
\end{align}
Note that for all $i \geq 0$, the operator $(\TFDRS)_i$ has eigenvector 
\begin{align*}
z_i := \left(-\frac{\cos(\theta_i)\sin(\theta_i)}{a + \cos^2(\theta_i)}, 1\right) \numberthis\label{eq:eigenvalueFDRS}
\end{align*}
with eigenvalue $b_i := (a + c_i^2)/(a+1) < 1$. Each component also has the eigenvector $(1, 0)$ with eigenvalue $0$. Thus, the only fixed-point of $\TFDRS$ is $0 \in \cH$.  Finally,
\begin{align}\label{eq:eigenvectornorm}
\|z_i\|^2 = \frac{c_i^2(1-c_i^2)}{(a + c_i^2)^2} + 1 && \mathrm{and} && \|(P_V)_iz_i\|^2 =  \frac{c_i^2(1-c_i^2)}{(a + c_i^2)^2}.
\end{align}

\subsubsection*{Slow convergence proofs}
We know that $z^{k+1} - z^k \rightarrow 0$ from~\eqref{thm:FDRSfacts:part:convergenceFPR:eq}. Therefore, because $\TFDRS$ is linear, \cite[Proposition 5.27]{bauschke2011convex} proves the following lemma.
\begin{lemma}[Strong convergence for linear operators]
Any sequence $(z^j)_{j \geq 0}\subseteq \cH$ generated by the $\TFDRS$ operator in~\eqref{eq:TFDRSsubspace} converges strongly to $0$. Consequently, the sequences $(x_h^j)_{j \geq 0} = (P_Vz^j)_{j \geq 0}$ and $(x_f^j)_{j \geq 0}$ converge strongly to zero.
\end{lemma}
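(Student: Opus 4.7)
The plan is to execute the strategy already sketched just before the lemma: combine linearity of $\TFDRS$, decay of the fixed-point residual, and the standard result cited from \cite{bauschke2011convex}, then propagate the convergence of $(z^j)_{j \geq 0}$ to the auxiliary sequences by nonexpansiveness. I would break the argument into four short steps.

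\textbf{Step 1: Identify the fixed-point set.} From the explicit formula~\eqref{eq:TFDRSsubspace}, $\TFDRS$ is a bounded linear operator on $\cH$, being the Hilbert direct sum of the $2\times 2$ matrices $(\TFDRS)_i$. On each block the eigenvalues are $0$ and $b_i = (a + c_i^2)/(a+1)$, and both lie in $[0,1)$, so no nonzero vector is fixed by $(\TFDRS)_i$. Hence the only fixed point of $\TFDRS$ on $\cH$ is $0$.

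\textbf{Step 2: Fixed-point residual decay.} Applying Part~\ref{thm:FDRSfacts:part:convergenceFPR} of Theorem~\ref{thm:FDRSfacts} with the admissible choice $\gamma = 1$ and $\lambda_k \equiv 1$ (which satisfies $\gamma \in (0, 2\beta_V)$ and $\lambda_k \in (0, 1/\alphaFDRSV)$ since $\beta_V = 1$ and $\alphaFDRSV = 2/3$), we obtain
\begin{align*}
\|z^{k+1} - z^k\|^2 \;=\; \|\TFDRS z^k - z^k\|^2 \;=\; o\!\left(\tfrac{1}{k+1}\right),
\end{align*}
so in particular $z^{k+1} - z^k \to 0$ strongly.

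\textbf{Step 3: Strong convergence of $(z^j)_{j \geq 0}$.} Since $\TFDRS$ is a nonexpansive linear operator on $\cH$ with $z^{k+1} - z^k \to 0$, Proposition 5.27 of \cite{bauschke2011convex} ensures that $(z^j)_{j \geq 0}$ converges strongly to a fixed point of $\TFDRS$. By Step 1, this fixed point must be $0$, so $z^k \to 0$ strongly.

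\textbf{Step 4: Transfer to $(x_h^j)_{j \geq 0}$ and $(x_f^j)_{j \geq 0}$.} Because $P_V$ is linear and $1$-Lipschitz with $P_V(0) = 0$, we have $\|x_h^k\| = \|P_V z^k\| \leq \|z^k\| \to 0$. For $x_f^k$, the defining map $\prox_{\gamma f} \circ \refl_{\chi_V} \circ (I_\cH - \gamma \nabla h)$ is a composition of nonexpansive operators, each of which fixes $0$ (since $\nabla h(0) = 0$, $\refl_{\chi_V}(0) = 0$, and $\prox_{\gamma f}(0) = 0$ as the minimizer of $\chi_U + (a/2)\|\cdot\|^2 + (1/(2\gamma))\|\cdot\|^2$), so $\|x_f^k\| \leq \|z^k\| \to 0$ as well.

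The only step that carries any real content is Step 3, where the cited proposition does the heavy lifting. If one wanted a self-contained argument, a direct alternative would be to exploit the block structure: each component $(z_i^k) \in \vR^2$ is driven by a linear contraction with spectral radius $b_i < 1$ and therefore tends to $0$; strong convergence in $\ell_2^2(\vN)$ then follows by combining this pointwise decay with Fej\'er monotonicity of $(\|z^j\|)_{j \geq 0}$ with respect to $\{0\}$ (Part~\ref{thm:FDRSfacts:part:fejer} of Theorem~\ref{thm:FDRSfacts}) via a standard tail-truncation argument, bypassing the need for the external citation.
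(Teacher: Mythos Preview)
Your proposal is correct and follows essentially the same route as the paper: the paper's entire proof is the sentence preceding the lemma, namely that $z^{k+1}-z^k\to 0$ by~\eqref{thm:FDRSfacts:part:convergenceFPR:eq} and that linearity of $\TFDRS$ together with \cite[Proposition~5.27]{bauschke2011convex} then yields strong convergence to a fixed point, which has already been identified as $0$. Your Steps~1--3 reproduce exactly this, and your Step~4 makes explicit the ``Consequently'' that the paper leaves to the reader; the alternative block-wise tail-truncation argument you sketch is a nice self-contained bonus but is not needed.
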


\begin{lemma}[Slow sequences {\cite[Lemma 6]{davis2014convergence}}]\label{lem:slowconvergencesequence}
Suppose that $F : \vR_+ \rightarrow (0, 1)$ is a function that is strictly decreasing to zero such that $\{1/(j+1) \mid j \in \vN \backslash \{0\}\} \subseteq \range(F)$ Then there exists a monotonic sequence $(b_j)_{j \geq 0} \subseteq (0, 1)$ such that $b_k \rightarrow 1^-$ as $k \rightarrow \infty$  and an increasing sequence $(n_j)_{j \geq 0} \subseteq \vN \cup \{0\}$ such that for all $k \geq 0$, $$\frac{b_{n_k}^{k+1}}{(n_k+1)} > e^{-1}F(k+1).$$
\end{lemma}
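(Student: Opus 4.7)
The plan is to invert $F$ to choose indices $n_k$ ensuring $F(k+1)(n_k+1) < 1$ strictly, and then to design $(b_n)$ in pieces so that $b_{n_k}^{k+1}$ is bounded below by $e^{-1}$ while $(b_n)$ still increases to $1$.

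First I would use the hypothesis $\{1/(j+1) : j \in \vN \setminus \{0\}\} \subseteq \range(F)$ together with the strict monotonicity of $F$: for each $j \geq 1$ there is a unique $t_j \in \vR_+$ with $F(t_j) = 1/(j+1)$, and the sequence $(t_j)$ is strictly increasing with $t_j \to \infty$ (because $F(t_j) = 1/(j+1) \to 0$ and $F$ decreases to zero on $\vR_+$). For each $k \geq 0$ I would then define $n_k := \max\{ j \geq 1 : t_j < k+1\}$, which is well defined for all sufficiently large $k$ (a finite initial segment is absorbed by a trivial shift). Strict monotonicity of $F$ gives $F(k+1) < F(t_{n_k}) = 1/(n_k+1)$, hence $F(k+1)(n_k+1) < 1$. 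The sequence $(n_k)$ is non-decreasing and unbounded, and a small upward perturbation preserving $F(k+1)(\tilde n_k+1) < 1$ makes it strictly increasing if required.

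Next I would build $(b_n)$ by blocks: for every $n$ in the block $[n_k, n_{k+1})$ set $b_n := \exp(-1/(k+1))$. By construction $b_n \in (0,1)$, $(b_n)$ is non-decreasing, and $b_n \to 1^-$ because $k \to \infty$ as $n \to \infty$. At the distinguished indices, $b_{n_k} = \exp(-1/(k+1))$, so $b_{n_k}^{k+1} = e^{-1}$. If strict monotonicity of $(b_n)$ is demanded, one replaces this block definition with $\exp(-1/(k+1)) (1 - \eta_n)$ for a tiny summable perturbation $\eta_n > 0$, which only changes $b_{n_k}^{k+1}$ by a controlled factor close to $1$.

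Putting the ingredients together,
\[
\frac{b_{n_k}^{k+1}}{n_k+1} \;=\; \frac{e^{-1}}{n_k+1} \;>\; e^{-1}F(k+1),
\]
where the strict inequality is exactly the slack produced in the first step. The delicate point will be reconciling strict monotonicity of $(n_k)$ with the bound $F(k+1)(n_k+1) < 1$: if $F$ decreases very slowly, consecutive $n_k$ produced by the inversion of $F$ may coincide, forcing a shift that erodes the slack. I would resolve this either by the recursive patch described above or by trading a bit of slack back from the $b_{n_k}$ side, replacing the target $e^{-1}$ by $e^{-1}(1+\delta_k)$ for a slowly vanishing $\delta_k > 0$ that absorbs a uniformly controlled loss due to the shift.
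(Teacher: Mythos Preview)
The paper does not prove this lemma; it is quoted from \cite[Lemma~6]{davis2014convergence} and used as a black box. So there is no in-paper argument to compare against, and I comment only on your construction.

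Your two-step plan---invert $F$ to pick $n_k$ with $F(k+1)(n_k+1)<1$, then assign $b_n$ in blocks so that $b_{n_k}^{k+1}=e^{-1}$---is the right idea, and for strictly increasing $(n_k)$ it works cleanly. The difficulty you flag is real, but neither of your patches resolves it. If $F$ stays above $1/2$ on $[0,K]$ with $K$ large (which is compatible with all hypotheses on $F$), the constraint $F(k+1)(\tilde n_k+1)<1$ forces $\tilde n_k=0$ for every $k\le K-1$, so no upward shift can make $(\tilde n_k)$ strictly increasing on that range. And in the block picture, when $n_{k'}=n_{k'+1}=\dots=n_k$ with $k'\ll k$, your rule ``$b_n=\exp(-1/(k+1))$ for $n\in[n_k,n_{k+1})$'' assigns $b_{n_k}=\exp(-1/(k'+1))$, hence $b_{n_k}^{k+1}=\exp(-(k+1)/(k'+1))$, which can be arbitrarily small---far too small for a slowly vanishing $\delta_k$ to absorb.

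The repair is a one-line change: keep $(n_k)$ merely non-decreasing (this is all the downstream application actually uses), and in the block definition use the \emph{largest} admissible index, i.e.\ set $b_n:=\exp\bigl(-1/(\kappa(n)+1)\bigr)$ with $\kappa(n):=\max\{k:n_k\le n\}$, which is finite because $n_k\to\infty$. Then $\kappa(n_k)\ge k$ for every $k$, so $b_{n_k}^{k+1}\ge e^{-1}$, and the strict inequality $F(k+1)(n_k+1)<1$ from your first step finishes the argument. Monotonicity of $(b_n)$ and $b_n\to 1^-$ are immediate from monotonicity and unboundedness of $\kappa$. With this adjustment your proof is complete.
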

The following is a simple corollary of Lemma~\ref{lem:slowconvergencesequence}.
\begin{corollary}\label{cor:slowconvergencesequence}
Let the notation be as in Lemma~\ref{lem:slowconvergencesequence}. Then for all $\eta \in (0, 1)$, we can find a sequence $(b_j)_{j \geq 0} \subseteq (\eta, 1)$ that satisfies the conditions of the lemma. 
\end{corollary}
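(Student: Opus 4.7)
The plan is to apply Lemma~\ref{lem:slowconvergencesequence} to $F$ unchanged, and then surgically modify the initial segment of the resulting sequence $(b_j)$ so that every term lies in $(\eta,1)$, while preserving monotonicity, the limit $b_k \to 1^-$, and the key inequality witnessed by $(n_j)$. The essential point will be that \emph{increasing} the $b$-values can only strengthen the inequality, since the left-hand side is monotone in each $b_{n_k}$ on $(0,1)$.

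First I would invoke Lemma~\ref{lem:slowconvergencesequence} to obtain a monotonic sequence $(b_j)_{j\geq 0} \subseteq (0,1)$ with $b_k \to 1^-$, together with an increasing sequence $(n_j)_{j\geq 0} \subseteq \vN \cup \{0\}$ such that $b_{n_k}^{k+1}/(n_k+1) > e^{-1}F(k+1)$ for all $k \geq 0$. Because $(b_j)$ is non-decreasing and tends to $1$, the set $\{j \geq 0 : b_j \leq \eta\}$ is either empty, in which case there is nothing left to do, or a finite initial segment $\{0,1,\ldots,J\}$ with $b_{J+1} > \eta$.

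In the non-trivial case, I would replace $b_0,\ldots,b_J$ with any strictly increasing choice $\tilde b_0 < \tilde b_1 < \cdots < \tilde b_J$ drawn from the nonempty open interval $(\eta, b_{J+1})$, and set $\tilde b_j := b_j$ for $j \geq J+1$. By construction, $(\tilde b_j)_{j\geq 0} \subseteq (\eta,1)$, the sequence is (non-strictly) monotonic, it still converges to $1^-$, and $\tilde b_j \geq b_j$ for every $j$, with strict inequality precisely when $j \leq J$.

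It then remains to check the witnessing inequality with the \emph{same} increasing sequence $(n_j)$. If $n_k > J$, then $\tilde b_{n_k} = b_{n_k}$ and the bound is inherited verbatim from the lemma. If $n_k \leq J$, then $\tilde b_{n_k} > \eta \geq b_{n_k}$, and since $t \mapsto t^{k+1}$ is strictly increasing on $(0,1)$, we obtain $\tilde b_{n_k}^{k+1} > b_{n_k}^{k+1}$; dividing by $n_k+1$ gives
\[
\frac{\tilde b_{n_k}^{k+1}}{n_k+1} > \frac{b_{n_k}^{k+1}}{n_k+1} > e^{-1} F(k+1).
\]
There is no real obstacle; the only thing to guard against is accidentally breaking monotonicity when reassigning the initial segment, which is avoided by choosing the new values strictly between $\eta$ and $b_{J+1}$ and ordering them increasingly.
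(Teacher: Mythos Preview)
Your proposal is correct and follows essentially the same approach as the paper: obtain the sequence from Lemma~\ref{lem:slowconvergencesequence} and then replace the finitely many terms below the threshold by larger values in $(\eta,1)$, noting that increasing $b_{n_k}$ only strengthens the inequality. The paper's execution is marginally slicker---it simply takes $(\max\{b_j,\eta+\varepsilon\})_{j\geq 0}$ for some $\varepsilon\in(0,1-\eta)$ rather than hand-picking strictly increasing values in $(\eta,b_{J+1})$---but the underlying idea is identical.
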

\begin{proof}
For any $\varepsilon \in (0, 1 - \eta)$, replace the sequence $(b_j)_{j \geq 0}$ in Lemma~\ref{lem:slowconvergencesequence} with $(\max\{b_j, \eta + \varepsilon\})_{j \geq 0}$. 
\end{proof}

We are now ready to show that FDRS can converge arbitrarily slowly.
\begin{theorem}[Arbitrarily slow FDRS]\label{thm:arbitrarilyslow}
For every function $F : \vR_+ \rightarrow (0, 1)$ that strictly decreases to zero and satisfies $\{1/(j+1) \mid j \in \vN \backslash \{0\}\} \subseteq \range(F)$, there is a point $z^0 \in \ell_2^2(\vN)$ and two closed subspaces $U$ and $V$ with zero intersection, $U\cap V = \{0\}$, such that the FDRS sequence $(z^j)_{j \geq 0}$ generated with the functions $f := \chi_{U} + (a/2)\|\cdot\|^2$ and $g := (1/2)\|\cdot\|^2$ and parameters $\lambda_k \equiv 1$ and $\gamma = 1$ strongly converges to zero, but for all $k \geq 1$, we have $$\|z^k - z^\ast\| \geq e^{-1} F(k).$$
\end{theorem}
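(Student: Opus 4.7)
The plan is to reverse-engineer the angles $\theta_i$ so that the block-diagonal eigenvalues $b_i = (a + c_i^2)/(a+1)$ of the FDRS operator in~\eqref{eq:TFDRSsubspace} realize a prescribed slow-to-$1$ sequence, then concentrate $z^0$ along the corresponding eigenvectors. Since $\TFDRS$ is block-diagonal and linear, each component evolves as $z^k_i = b_i^k z^0_i$, and the rate of $\|z^k\|$ is dictated by those $b_i$ for which $z^0_i \ne 0$.

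First I would apply Corollary~\ref{cor:slowconvergencesequence} with floor $\eta := a/(a+1) \in [0,1)$ to obtain a monotone sequence $(b_j)_{j\ge 0}\subseteq(\eta,1)$ with $b_j\to 1^-$ together with an increasing index sequence $(n_j)_{j\ge 0}$ such that
\begin{equation*}
\frac{b_{n_k}^{\,k+1}}{n_k+1} \;>\; e^{-1}F(k+1) \qquad \text{for every } k\ge 0.
\end{equation*}
The condition $b_i > a/(a+1)$ is exactly what makes $c_i := \sqrt{(a+1)b_i - a}$ lie in $(0,1)$, and hence $\theta_i := \arccos(c_i) \in (0,\pi/2]$ is well-defined with $\theta_i \downarrow 0$. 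This is the one place that requires care: invoking the Lemma directly (instead of the Corollary) would allow $b_i$ below the threshold and make $c_i$ complex.

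Next I would define $U$ and $V$ from these $\theta_i$ as in the family of examples. Since $\theta_i > 0$ on every block, $U\cap V=\{0\}$. By construction, each block $(\TFDRS)_i$ has eigenvalue $b_i$ with unit-length eigenvector $\hat z_i := z_i/\|z_i\|$ (where $z_i$ is from~\eqref{eq:eigenvalueFDRS}). Set the initial iterate coordinatewise by
\begin{equation*}
z^0_i \;:=\; \frac{1}{i+1}\,\hat z_i.
\end{equation*}
Because $\|\hat z_i\|=1$, we have $\|z^0\|^2 = \sum_{i\ge 0} 1/(i+1)^2 < \infty$, so $z^0 \in \ell_2^2(\vN)$. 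Linearity of $\TFDRS$ on each block gives $z^k_i = b_i^k z^0_i$, hence
\begin{equation*}
\|z^k\|^2 \;=\; \sum_{i\ge 0}\frac{b_i^{2k}}{(i+1)^2}.
\end{equation*}

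Finally, for the lower bound, fix $k \ge 1$ and drop all but the $n_{k-1}$-th term in the sum to obtain $\|z^k\| \ge b_{n_{k-1}}^{\,k}/(n_{k-1}+1) > e^{-1}F(k)$, where the last inequality is the Corollary's bound at index $k-1$. Strong convergence $z^k \to 0$ is immediate from the preceding \emph{Strong convergence for linear operators} lemma, since $\TFDRS$ is linear and the FPR goes to zero by~\eqref{thm:FDRSfacts:part:convergenceFPR:eq}. There is no real technical obstacle here — everything reduces to the two bookkeeping points above: (i) the shift $k \mapsto k-1$ aligning the Corollary's exponent $k+1$ with the target $e^{-1}F(k)$ for $k\ge 1$, and (ii) choosing $\eta = a/(a+1)$ so that the prescribed $b_i$ are realizable as $\cos^2(\theta_i)$-images through $(a+c_i^2)/(a+1)$.
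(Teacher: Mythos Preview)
Your proposal is correct and follows essentially the same route as the paper: pick the eigenvalues $b_i$ via Corollary~\ref{cor:slowconvergencesequence}, back-solve $c_i=\sqrt{(a+1)b_i-a}$ to manufacture the angles, place $z^0$ along the block eigenvectors with $\ell_2$ weights $1/(i+1)$, and then drop to a single block to get the lower bound $\|z^k\|\ge b_{n_{k-1}}^{\,k}/(n_{k-1}+1)>e^{-1}F(k)$. The only slip is your choice $\eta=a/(a+1)$: when $a=0$ this gives $\eta=0$, which is outside the hypothesis $\eta\in(0,1)$ of Corollary~\ref{cor:slowconvergencesequence}; the paper takes $\eta$ strictly greater than $a/(a+1)$, which you can do as well since any $b_i>a/(a+1)$ already makes $c_i$ real and positive.
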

\begin{proof}
For all $i  \geq 0$, define $z_i^0 = (\|z_i\|^{-1}/(i+1))z_i$ with $z_i$ as in~\eqref{eq:eigenvalueFDRS}. Then $\|z_i^0\| = 1/(i+1)$ and $z_i^0$ is an eigenvector of $(\TFDRS)_i$ with eigenvalue $b_i := (a+c_i^2)/(a+1)$. Define the concatenated vector $z^0 := (z_i^0)_{i \geq 0}$. Note that $z^0 \in \cH$ because $\|z^0\|^2 = \sum_{i=0}^\infty 1/(i+1)^2 < \infty$. Thus, for all $k \geq 0$, we let $z^{k+1} := \TFDRS z^k$.  

Now, recall that $z^\ast= 0$. Thus, for all $n \geq 0$ and $k \geq 0$, we have 
\begin{align*}
\|z^{k+1} - z^\ast\|^2 = \|\TFDRS^{k+1} z^0 \|^2 = \sum_{i=0}^\infty b_i^{2(k+1)}\|z_i^0\|^2 = \sum_{i=0}^\infty \frac{b_i^{2(k+1)}}{(i+1)^2} \geq \frac{b_n^{2(k+1)}}{(n+1)^2}.
\end{align*}
Thus, $\|z^{k+1} - z^\ast\| \geq  b_n^{k+1}/(n+1)$.  Choose $b_n$ and the sequence $(n_j)_{j \geq 0}$ using Corollary~\ref{cor:slowconvergencesequence} with $\eta \in (a/(a+1), 1)$. Then solve $c_n = \sqrt{b_n(1+a) - a} > 0.$
\end{proof}

\begin{remark}
Theorems~\ref{thm:arbitrarilyslow} and~\ref{thm:strongconvexity} show that the sequence $(z^j)_{j \geq 0}$ can converge \emph{arbitrarily slowly} even if $(x_f^j)_{j \geq 0}$ and $(x_h^j)_{j \geq 0}$ converge with rate $o(1/\sqrt{k+1})$.
\end{remark}

The following theorem shows that $(x_f^j)_{j \geq 0}$ and $(x_h^j)_{j \geq 0}$ do not converge linearly. See Appendix~\ref{app:eq:nonlinearconvergence} for the proof.
\begin{theorem}\label{eq:nonlinearconvergence}
There exists a sequence $(c_i)_{i \geq 0}$ so that $(x_h^j)_{j \geq 0}$ and $(x_f^j)_{j \geq 0}$ converge strongly, but not linearly. In particular, for any $\alpha > 1/2$, there is an initial point $z^0 \in \cH$ so that for all $k \geq 1$,
\begin{align*}
\|x_h^k - x^\ast\|^2 \geq \frac{1}{(k+1)^{2\alpha}} && \text{and} && \|x_f^k - x^\ast\|^2 \geq  \frac{(a+1/2)^2}{(a+1)^2(k+1)^{2\alpha}}. 
\end{align*}
Thus, the nonergodic ``best" convergence rates in Part~\ref{thm:strongconvexity:part:nonergodic} of Theorem~\ref{thm:strongconvexity} are sharp.
\end{theorem}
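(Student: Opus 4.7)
My plan is to exploit the block-diagonal structure of $\TFDRS$ exhibited in~\eqref{eq:TFDRSsubspace}: each $\vR^2$ block $(\TFDRS)_i$ has eigenvalues $0$ and $b_i = (a+c_i^2)/(a+1)$, with nontrivial eigenvector $z_i$ from~\eqref{eq:eigenvalueFDRS}. By initializing of the form $z_i^0 := \beta_i z_i$ block by block, the iterates decouple into $z_i^k = \beta_i b_i^k z_i$ and the whole analysis reduces to balancing the eigencoefficients $(\beta_i)$ against the spectral gaps $(1-b_i)$. I will then tune $(c_i)$ and $(\beta_i)$ so that $\|x_h^k\|^2 = \sum_i \|(x_h^k)_i\|^2 \geq 1/(k+1)^{2\alpha}$, and transfer the bound to $x_f^k$.

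First, I will derive the blockwise formulas. Using Lemma~\ref{lem:FDRSidentities} in the form $x_f = \TFDRS z + P_V z - z$ together with the explicit matrix in~\eqref{eq:TFDRSsubspace}, a direct calculation yields
\begin{align*}
\|(x_h^k)_i\|^2 = \frac{\beta_i^2 b_i^{2k} c_i^2(1-c_i^2)}{(a+c_i^2)^2} \quad \text{and} \quad \|(x_f^k)_i\|^2 = \frac{\beta_i^2 b_i^{2k}(1-c_i^2)}{(a+1)^2},
\end{align*}
so $\|(x_f^k)_i\|^2/\|(x_h^k)_i\|^2 = (a+c_i^2)^2/(c_i^2(a+1)^2)$. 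A short case analysis establishes $(a+u)^2 \geq u(a+1/2)^2$ for all $u \in [1/2,1]$ and $a \geq 0$; hence once I ensure $c_i^2 \geq 1/2$ uniformly in $i$, the ratio is at least $(a+1/2)^2/(a+1)^2$, and summing over $i$ reduces the $x_f^k$ bound to the $x_h^k$ bound with exactly the constant stated in the theorem.

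Second, I will construct the sequences. Set $\gamma := 2\alpha/(2\alpha-1)$, which is finite and positive since $\alpha > 1/2$, and choose an integer $N \geq 1$ large enough that $1/((a+1)(i+N)^\gamma) \leq 1/2$ for every $i \geq 0$. Define $1 - c_i^2 := 1/((a+1)(i+N)^\gamma)$ and $\beta_i^2 := C/(i+N)^{2\alpha}$ for a constant $C$ to be chosen later. Then $c_i^2 \geq 1/2$, the point $z^0 := (\beta_i z_i)_{i \geq 0}$ lies in $\cH$ because $\sum_i \beta_i^2\|z_i\|^2 < \infty$ (using $2\alpha > 1$), and $1-b_i = 1/((a+1)^2(i+N)^\gamma)$. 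Substituting into the formula for $\|(x_h^k)_i\|^2$ and absorbing positive constants gives $\|(x_h^k)_i\|^2 \geq C' b_i^{2k}/(i+N)^{2\alpha+\gamma}$. I will lower bound the full sum by the single term with index $n_k$ chosen so that $(n_k+N)^\gamma \in [k,2k]$; then $1-b_{n_k}$ is of order $1/k$, so $b_{n_k}^{2k}$ is bounded below by a positive constant independent of $k$, and $(n_k+N)^{2\alpha+\gamma} \leq (2k)^{(2\alpha+\gamma)/\gamma}$. The exponent equals $1 + 2\alpha/\gamma = 1 + (2\alpha - 1) = 2\alpha$ by the very choice of $\gamma$, so $\|x_h^k\|^2 \geq C''/k^{2\alpha}$ for an explicit $C'' > 0$. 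Enlarging $C$ scales $C''$ linearly, so I can arrange $C''/k^{2\alpha} \geq 1/(k+1)^{2\alpha}$ for all $k \geq 1$, completing the bound for $x_h^k$; the $x_f^k$ bound then follows from step one.

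The main obstacle is the delicate coupling between the decay rate $\gamma$ of $1-c_i^2$ and the weight exponent $2\alpha$: the exponent in the resulting lower bound is $1 + 2\alpha/\gamma$, so forcing it to equal $2\alpha$ pins down $\gamma = 2\alpha/(2\alpha-1)$, while the requirement that $\sum_i \beta_i^2 < \infty$ forces $2\alpha > 1$. These two constraints are simultaneously realizable precisely when $\alpha > 1/2$, which is the hypothesis of the theorem; the boundary case $\alpha = 1/2$ would require $\gamma = \infty$ and is genuinely unattainable, which is consistent with the $o(1/\sqrt{k+1})$ upper bound in Part~\ref{thm:strongconvexity:part:nonergodic} of Theorem~\ref{thm:strongconvexity}. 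A minor technical point is enforcing the inequality uniformly for every $k \geq 1$ rather than only for large $k$; this is handled by the integer shift $N$ and the freedom to enlarge $C$.
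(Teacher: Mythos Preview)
Your proposal is correct, but the route differs from the paper's in two essential places.

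\textbf{Choice of $(c_i)$ and lower-bound mechanism.} The paper takes the $\alpha$-independent choice $c_i^2 = i/(i+1)$, so that $1-c_i^2 = 1/(i+1)$ and $b_i = 1 - 1/((a+1)(i+1))$. With the normalization $z_i^0 \propto (i+1)^{-\alpha} z_i$, the paper lower-bounds $\|x_h^k\|^2$ by the \emph{entire tail} $\sum_{i\geq k}$, using $b_i^{2k}\geq e^{-2/(a+1)}$ for $i\geq k$ and the integral estimate $\sum_{i\geq k} 2\alpha/(i+1)^{1+2\alpha}\geq 1/(k+1)^{2\alpha}$, which lands exactly on the stated constant $1$ with no ex post adjustment. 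You instead make the $\alpha$-dependent choice $1-c_i^2 \propto (i+N)^{-\gamma}$ with $\gamma = 2\alpha/(2\alpha-1)$ and keep only a \emph{single} term at index $n_k$ with $(n_k+N)^\gamma\asymp k$; the exponent identity $1+2\alpha/\gamma = 2\alpha$ is what makes this work. Your method is conceptually simpler (no sum, no integral) but pays for it with an $\alpha$-dependent construction and an unspecified constant that you then fix by enlarging $C$ and handling the finitely many small $k$ separately.

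\textbf{Transfer to $x_f^k$.} The paper repeats the full tail-sum computation after writing $x_f^k = (\TFDRS - P_{V^\perp})\TFDRS^k z^0$ and expanding the $2\times 2$ block explicitly. Your shortcut of proving the blockwise ratio $\|(x_f^k)_i\|^2/\|(x_h^k)_i\|^2 = (a+c_i^2)^2/(c_i^2(a+1)^2)\geq (a+1/2)^2/(a+1)^2$ under $c_i^2\geq 1/2$, and then summing, is cleaner and yields precisely the constant in the statement without redoing the estimate.

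In summary: the paper's tail-sum argument gives the exact constants in one pass with a universal $(c_i)$; your single-term argument is lighter and your ratio trick for $x_f$ is a genuine simplification, at the cost of an $\alpha$-tuned sequence and a final constant-adjustment step.
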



\section{Primal-dual splittings}\label{sec:PD}
In this section, we reformulate FDRS as a primal-dual algorithm applied to the dual of the following problem: $\Min_{x \in V} f(x) + h(x)$.

\begin{lemma}[FDRS is a primal-dual algorithm]\label{lem:FDRSpd1}
Let $\tau := 1/\gamma$, and suppose that $(z^j)_{j \geq0}$ is generated by the FDRS algorithm with $\lambda_k \equiv 1$. For all $k \geq 0$, let  $y^{k} := -\tnabla \chi_{V}(x_h^k).$ Then for all $k \geq 0$, we have the recursive update rule:
\begin{align}\label{eq:pdalgsamesteps}
\begin{cases}
y^{k+1} &= P_{V^\perp}(y^k - \tau x_f^k); \\
x_f^{k+1} &= \prox_{\gamma f} \left( x_f^{k} - \gamma \nabla h (x_f^k) + \gamma (2y^{k+1} - y^k)\right).
\end{cases}
\end{align}
\end{lemma}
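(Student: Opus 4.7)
The plan is to unfold the FDRS recursion using the subgradient identities from Lemma~\ref{lem:FDRSidentities} (summarized in Figure~\ref{fig:FDRSsquare}) and repeatedly exploit the fact that $\tnabla\chi_V(x_h^k)\in V^\perp$ and that $\range(\nabla h)\subseteq V$ (since $\nabla h = P_V\circ \nabla g\circ P_V$). Throughout, I will work from the fundamental equation $z^{k+1} = \TFDRS(z^k) = x_f^k + \gamma\tnabla\chi_V(x_h^k) = x_f^k - \gamma y^k$ given by Lemma~\ref{lem:FDRSidentities} with $\lambda_k\equiv 1$.

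First I would derive the dual update. Applying $P_V$ to $z^{k+1} = x_f^k - \gamma y^k$ and using $y^k\in V^\perp$ yields $x_h^{k+1} = P_V z^{k+1} = P_V x_f^k$. On the other hand, by the identity $\tnabla\chi_V(x_h^{k+1}) = (1/\gamma)P_{V^\perp} z^{k+1}$ from Part~\ref{prop:basicprox:part:optproj} of Proposition~\ref{prop:basicprox}, we have
\begin{equation*}
y^{k+1} = -\tnabla\chi_V(x_h^{k+1}) = -\tau P_{V^\perp} z^{k+1} = -\tau P_{V^\perp}(x_f^k - \gamma y^k) = y^k - \tau P_{V^\perp} x_f^k,
\end{equation*}
where the last equality uses $P_{V^\perp}y^k = y^k$. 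Since $y^k - \tau P_{V^\perp}x_f^k \in V^\perp$, we can insert an outer projection to obtain $y^{k+1} = P_{V^\perp}(y^k - \tau x_f^k)$, which is the first update. As a byproduct I note the handy identity $P_{V^\perp} x_f^k = \gamma(y^k - y^{k+1})$, which will drive the simplification of the primal update.

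Next I would handle the primal update. By definition, $x_f^{k+1} = \prox_{\gamma f}\bigl(\refl_{\chi_V}(z^{k+1} - \gamma\nabla h(z^{k+1}))\bigr)$. Since $\nabla h(z^{k+1})\in V$, the reflection $\refl_{\chi_V} = 2P_V - I_\cH$ acts as the identity on this term, giving
\begin{equation*}
\refl_{\chi_V}(z^{k+1} - \gamma\nabla h(z^{k+1})) = (2x_h^{k+1} - z^{k+1}) - \gamma\nabla h(z^{k+1}).
\end{equation*}
Because $\nabla h = P_V\circ \nabla g\circ P_V$ factors through $P_V$, and because $P_V x_f^k = x_h^{k+1} = P_V z^{k+1}$, we have $\nabla h(x_f^k) = \nabla h(x_h^{k+1}) = \nabla h(z^{k+1})$. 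It remains to simplify $2x_h^{k+1} - z^{k+1}$. Using $x_h^{k+1} = P_V x_f^k = x_f^k - P_{V^\perp}x_f^k$ and $z^{k+1} = x_f^k - \gamma y^k$, together with $P_{V^\perp}x_f^k = \gamma(y^k - y^{k+1})$, I compute
\begin{equation*}
2x_h^{k+1} - z^{k+1} = x_f^k - 2 P_{V^\perp}x_f^k + \gamma y^k = x_f^k + \gamma(2y^{k+1} - y^k).
\end{equation*}
Plugging this back into the proximal expression yields $x_f^{k+1} = \prox_{\gamma f}(x_f^k - \gamma\nabla h(x_f^k) + \gamma(2y^{k+1} - y^k))$, which is the second line of~\eqref{eq:pdalgsamesteps}.

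The only real obstacle is bookkeeping: one must consistently exploit that $y^k\in V^\perp$, that $\nabla h$ has range in $V$, and that $P_V x_f^k = x_h^{k+1}$, in order to convert the reflection-based FDRS step into the asymmetric primal-dual form. No new analytic ingredient is needed beyond Lemma~\ref{lem:FDRSidentities} and the elementary properties of $P_V$ and $P_{V^\perp}$.
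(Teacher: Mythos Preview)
Your proof is correct and follows essentially the same approach as the paper: both start from the identity $z^{k+1}=x_f^k-\gamma y^k$ supplied by Lemma~\ref{lem:FDRSidentities}, derive the dual update from $y^{k+1}=-(1/\gamma)P_{V^\perp}z^{k+1}$, record the intermediate relation $P_{V^\perp}x_f^k=\gamma(y^k-y^{k+1})$ (equivalently $x_f^k=x_h^{k+1}+\gamma(y^k-y^{k+1})$), use $P_V x_f^k=x_h^{k+1}$ to identify $\nabla h(x_f^k)=\nabla h(x_h^{k+1})$, and then rewrite the argument of $\prox_{\gamma f}$ as $x_f^k-\gamma\nabla h(x_f^k)+\gamma(2y^{k+1}-y^k)$. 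The only cosmetic difference is that you compute this argument via $\refl_{\chi_V}(z^{k+1}-\gamma\nabla h(z^{k+1}))=2x_h^{k+1}-z^{k+1}-\gamma\nabla h(z^{k+1})$, whereas the paper reaches it via the subgradient form $x_h^{k+1}-\gamma\nabla h(x_h^{k+1})+\gamma y^{k+1}$; these are the same expression.
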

{\em Proof.}
Fix $k \geq 0$. By Lemma~\ref{lem:FDRSidentities}, $z^{k+1} = x_f^k -  \gamma y^k$\cut{(~\eqref{eq:zminusxasub})}, so $(-1/\gamma)z^{k+1} = y^k - \tau x_f^k$. Thus, the formula for $(y^j)_{j \geq 0}$ follows from $y^{k+1} = - \tnabla \chi_V(x_h^{k+1}) =  -(1/\gamma)P_{V^\perp}z^{k+1}$\cut{(~\eqref{eq:tadef})}.

Now observe that 
\begin{align*}
x_f^{k} = P_{V} x_f^{k} + P_{V^\perp} x_f^{k} \cut{\stackrel{\eqref{eq:zminusxasub}}{=}}&= P_{V} (z^{k+1} + \gamma y^{k}) + P_{V^\perp}( z^{k+1} + \gamma y^{k}) = x_h^{k+1} + \gamma(y^k - y^{k+1}).
\end{align*}
Furthermore, $\nabla h(x_f^{k}) = \nabla h(P_Vx_f^{k}) = \nabla h (P_V (z^{k+1} + \gamma y^{k})) = \nabla h (x_h^{k+1})$.  Thus,
\begin{align*}
x_f^{k+1} &\stackrel{\eqref{eq:FPRidentity}}{=} x_h^{k+1} -\gamma \left(\tnabla \chi_V(x_h^{k+1}) +  \nabla h(x_h^{k+1}) + \tnabla f(x_f^{k+1})\right)\\
&= \prox_{\gamma f} ( x_h^{k+1} - \gamma \nabla h(x_h^{k+1}) + \gamma y^{k+1}) \\
&= \prox_{\gamma f}(x_f^{k} - \gamma \nabla h (x_f^k) +  \gamma (2y^{k+1} - y^k)). \qquad\endproof
\end{align*}

The algorithm in~\eqref{eq:pdalgsamesteps} is the primal-dual forward-backward algorithm of V{\~u} and Condat~\cite{vu2013splitting,condat2013primal} applied to the following dual problem: $\Min_{x \in V^\perp} \; (f+h)^\ast(x)$ where $(f+h)^\ast(\cdot) = \sup_{x \in \cH} \dotp{x, \cdot} - (f + h)(x)$ is the Legendre-Fenchel transform of $f + h$~\cite[Definition 13.1]{bauschke2011convex}.
For convergence, \cite[Theorem 3.1]{vu2013splitting} requires $\gamma \tau < 1$ and  $2\beta_V > \left(\min\{1/\gamma, 1/\tau\}\left(1- \sqrt{\gamma \tau}\right)\right)^{-1}$ whereas FDRS requires $\gamma < 2\beta_V$ (and $\tau = 1/\gamma$).

Thus, the FDRS algorithm is a limiting case of V{\~u} and Condat's algorithm, much like the DRS algorithm~\cite{lions1979splitting} is a limiting case of Chambolle and Pock's primal-dual algorithm \cite{chambolle2011first}. In addition, the convergence rate analysis in Section~\ref{sec:objectiveconvergencerates} cannot be subsumed by the recent convergence rate analysis of the primal-dual gap of V{\~u} and Condat's algorithm \cite{davis2014convergencepd}, which only applies when $\gamma \tau < 1$. The original FDRS paper did not show this connection~\cite[Remark 6.3 (iii)]{briceno2012forward}. 


\section{Conclusion}
In this paper, we provided a comprehensive convergence rate analysis of the FDRS algorithm under general convexity, strong convexity, and Lip\-schitz differentiability assumptions.  In almost all cases, the derived convergence rates are shown to be sharp. In addition, we showed that the FDRS algorithm is the limiting case of a recently developed primal-dual forward-backward operator splitting algorithm and, thus, clarify how it relates to existing algorithms.  Future work on FDRS might evaluate the performance of the algorithm on realistic problems. 

\section*{Acknowledgement}
We thank Prof. Wotao Yin and the anonymous reviewers for helpful comments. We also thank the two anonymous referees for their insightful and detailed comments.

\appendix

\section{Performance improvement: $\beta_V$ versus $\beta$}\label{app:beta_vcompare}

\begin{figure}[htbp]
   \centering
   \includegraphics[width=0.8\textwidth]{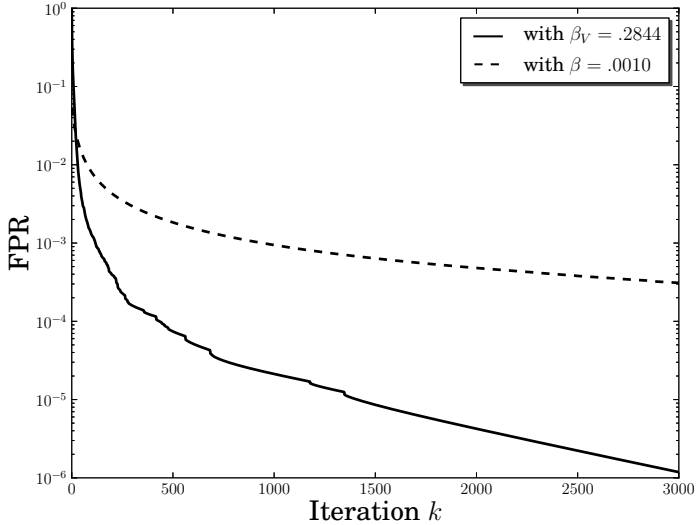}
   \caption{We plot the normalized FPR, $\|\TFDRS z^k - z^k\|/(1 + \|\TFDRS z^k\|)$, in a dual SVM example. See Appendix~\ref{app:beta_vcompare} for the details.}
   \label{fig:SVM}
\end{figure}

In this section, we briefly illustrate the benefits of using $\beta_V$ in place of $\beta$ on a Kernelized SVM problem, which is discussed in Section~\ref{sec:intro}; see \eqref{eq:cqprogramming} for notation. In Figure~\ref{fig:SVM} we plot the FPR associated to the FDRS algorithm applied to a 1000-dimensional quadratic program. To generate the quadratic program, we use  a random 1000-element subset of the the ``a7a" dataset (available from the LIBSVM website~\cite{CC01a}) denoted by $X = \{(x_1, y_1)^T, \cdots, (x_{1000}, y_{1000})^T\} \subseteq \vR^{ 123}$ where for each $i = 1, \cdots, 1000$, $x_i\in \vR^{122}$ is a data point and $y_i\in \{-1, 1\}$ is a class label. We use the matrix $Q \in \vR^{1000\times1000}$ with $i,j$ entry given by the formula $Q_{i,j} = y_iy_j\exp(-2^{-3}\|x_i - x_j\|^2)$ for $i, j \in \{1, \cdots, 1000\}$ (i.e., we use the radial basis function kernel). The matrix $A$ is the row vector $(y_1, \cdots, y_{1000}) \in \vR^{1\times 1000}$, and the set $C$ is the box $[0, 10]^{1000}\subseteq \vR^{1000}$. In this case, $P_V$ has rank $999$, but the maximal eigenvalue $(1/\beta_V \approx 3.5159)$ of $P_V\circ Q \circ P_V$ is approximately $275.8248$ times smaller than the maximal eigenvalue $(1/\beta \approx 969.7836)$ of $Q$. Figure~\ref{fig:SVM} shows that choosing $\gamma = 1.99 \beta_V$ results in a tremendous speedup. (In both examples, we chose $\lambda_k \equiv 1$.)

\section{Proofs of technical results}

\subsection{Proof of Proposition~\ref{prop:compogradientsum}}\label{app:prop:compogradientsum}
For the proof, we ask the reader to recall~\eqref{eq:lambdaboundwithepsilon}.

For all $k \geq 0$,  set 
\begin{align*}
p^k &:= \frac{1 - \alpha_1}{\alpha_1}\| (I_\cH - T_1)\circ T_2 (z^k) - (I_\cH - T_1)\circ T_2(z^\ast)\|^2 \\
&+\frac{1 - \alpha_2}{\alpha_2} \|(I_\cH - T_2)(z^k) - (I_\cH - T_2)(z^\ast)\|^2.
\end{align*}
By applying~\eqref{eq:avgdecrease} twice, we get $\|T_1\circ T_2 (z^k) - T_1\circ T_2 (z^\ast)\|^2 \leq \|z^k - z^\ast\|^2 - p^k.$

Part~\ref{prop:basicprox:part:wider} of Proposition~\ref{prop:basicprox} shows that $(T_1\circ T_2)_{\lambda_k}$ is $(\alpha_{1, 2}\lambda_k)$-averaged. Thus,
\begin{align*}
\|z^{k+1} - z^\ast\|^2 &\stackrel{\eqref{eq:avgdecrease}}{\leq} \|z^k - z^\ast\|^2 - \frac{\lambda_k(1-\lambda_k\alpha_{1, 2})}{\alpha_{1, 2}}\|T_1\circ T_2(z^k) - z^k\|^2.
\end{align*}
Therefore, $\sum_{i=0}^\infty\frac{\lambda_i(1-\alpha_{1, 2}\lambda_i)}{\alpha_{1, 2}}\|T_1\circ T_2(z^i) - z^i\|^2 \leq \|z^0 - z^\ast\|^2.$

By \cite[Corollary 2.14]{bauschke2011convex}, the following holds: for all $x, y \in \cH$ and all $\lambda \in \vR$, we have $\|\lambda x + (1-\lambda) y\|^2 = \lambda \|x\|^2 + (1-\lambda)\|y\|^2 - \lambda(1-\lambda)\|x - y\|^2.$
Therefore, we have
\begin{align*}
&\|z^{k+1} - z^\ast\|^2 \\
&= (1-\lambda_k)\|z^k - z^\ast\|^2 + \lambda_k\|T_1\circ T_2 (z^k) - T_1 \circ T_2(z^\ast)\|^2 - \lambda_k(1-\lambda_k)\|z^k - T_1\circ T_2(z^k)\|^2  \\
&\leq  \|z^k - z^\ast\|^2 - \lambda_k p^k +  \lambda_k(\lambda_k - 1)\|z^k - T_1\circ T_2(z^k)\|^2 \\
&\leq \|z^k - z^\ast\|^2 - \lambda_k p^k + \frac{\lambda_k(1-\alpha_{1, 2}\lambda_k)}{\alpha_{1, 2}\varepsilon}\|z^k - T_1\circ T_2(z^k)\|^2.
\end{align*}
Thus, take $k \rightarrow \infty$ in the following inequality to get the result:
\begin{align*}
&\sum_{i=0}^k \lambda_i\|(I_{\cH} - T_2)(z^i) - (I_{\cH} - T_2) (z^\ast)\|^2 \leq \frac{\alpha_2}{1-\alpha_2}\sum_{i=0}^k \lambda_i p^i \\
&\leq \frac{\alpha_2}{1-\alpha_2}\sum_{i=0}^k \left(\|z^i - z^\ast\|^2 - \|z^{i+1} - z^\ast\|^2 +  \frac{\lambda_i(1-\alpha_{1, 2}\lambda_i)}{\alpha_{1, 2}\varepsilon}\|z^i - T_1\circ T_2(z^i)\|^2\right) \\
&\leq \frac{\alpha_2(1+1/\varepsilon)\|z^0 - z^\ast\|^2}{1-\alpha_2}.\qquad \endproof
\end{align*}

\subsection{Proof of Lemma~\ref{lem:FDRSidentities}}\label{app:lem:FDRSidentities}
The identity for $x_h = z - \gamma \tnabla \chi_{V}(x_h)$ follows from Part~\ref{prop:basicprox:part:optprox} of Proposition~\ref{prop:basicprox}. Note that by the Moreau identity $P_{V^\perp} = I-P_{V}$, we have $\gamma \tnabla \chi_{V}(x_h) = P_{V^\perp} z$. Note that by definition, $\nabla h(z) = P_V \circ \nabla g \circ P_V(z) = P_V\circ \nabla g(x_h) = \nabla h(x_h)$ and $\nabla h(z) \in V$. Thus, we get the identity for $x_f$:
\begin{align*}
&\prox_{\gamma f}\circ \refl_{ \chi_V}\circ ( I_{\cH} - \gamma \nabla h)(z) = \refl_{\chi_V}\circ ( I_{\cH} - \gamma \nabla h)(z) - \gamma \tnabla f(x_f)  \\
&= x_h - \gamma \nabla h(z) - P_{V^\perp} z- \gamma \tnabla f(x_f) = x_h  - \gamma \left(\tnabla \chi_V(x_h) +  \nabla h(x_h) +  \tnabla f(x_f)\right).
\end{align*}

Finally, given the identity $(\TFDRS)_{\lambda}(z) - z = \lambda(\TFDRS(z) - z)$,~\eqref{eq:FPRidentity} will follow as soon as we show $\TFDRS(z)  = x_f + z - x_h = x_f + \gamma \tnabla \chi_{V}(x_h)$: 
\begin{align*}
 \left(\frac{1}{2}I_{\cH} + \frac{1}{2}\refl_{\gamma f} \circ \refl_{\chi_V}\right)(z - \gamma \nabla h(z)) &=\left(\prox_{\gamma f} \circ \refl_{\chi_{V}} + I_{\cH} - P_V\right)(z - \gamma \nabla h(z)) \\
&= x_f +  P_{V^\perp}(z - \gamma\nabla h(z)) = x_f + \gamma\tnabla \chi_{V}(x_h).\qquad\endproof
\end{align*}

\subsection{Proof of Lemma~\ref{lem:FDRSoptimality}}\label{app:lem:FDRSoptimality}
Let $x \in \zer(\partial f + \nabla h + \partial \chi_V)$. Choose subgradients $\tnabla f(x) \in \partial f(x)$ and $\tnabla \chi_V(x) \in \partial \chi_V(x) = V^\perp$ (by~\eqref{eq:normalcone}) such that $\tnabla f(x) + \nabla h(x) + \tnabla \chi_{V}(x)  = 0$ and set $z := x + \gamma\tnabla \chi_V(x)$. We claim that $z$ is a fixed-point of $\TFDRS$.  From Lemma~\ref{lem:FDRSidentities}, we get the points: $x_h := P_V(z) = x$ and $x_f := \prox_{\gamma f}\circ \refl_{ \chi_V}\circ ( I_\cH - \gamma \nabla h)(z).$ But $\tnabla \chi_V(x_h) +  \nabla h(x_h) \in -\partial f(x)$, and 
\begin{align*}
&\refl_{\chi_V}\circ ( I_\cH - \gamma \nabla h)(z) = P_V( z - \gamma \nabla h(z)) +  (P_V - I_{\cH}) ( z - \gamma \nabla h(z)) \\
&= x - \gamma \nabla h(x) - P_{V^\perp} z = x - \gamma \nabla h(x) - \gamma\tnabla \chi_V(x) = x + \gamma \tnabla  f(x).
\end{align*}
Therefore, $x_f = \prox_{\gamma f}(x + \gamma \tnabla  f(x)) = x = x_h$ (see Part~\ref{prop:basicprox:part:optprox} of Proposition~\ref{prop:basicprox}). Thus, by Lemma~\ref{lem:FDRSidentities}, $\TFDRS z = z + x_f - x_h= z$. We have proved the first inclusion.

On the other hand, suppose that $z \in \cH$ and $\TFDRS z = z$.  Then $x: = x_h = P_Vz$, and $0 = \TFDRS z -z = x_f - x_h = -\gamma \left(\tnabla \chi_V(x_h) +  \nabla h(x_h) +  \tnabla f(x_f)\right)$.  Because $x_f = x_h$, we get $x \in \zer(\partial f + \nabla  h + \partial \chi_V) $. \qquad \endproof

\subsection{Proof of Proposition~\ref{prop:FDRSupper}}\label{app:prop:FDRSupper}
In the following derivation, we use~\eqref{eq:fsbound} and~\eqref{eq:hsbound}, Lemma~\ref{lem:FDRSidentities},  the cosine rule, and the inclusion $\tnabla \chi_V(x_h) \in V^\perp$:
\begin{align*}
&2\gamma\lambda\left(f(x_f) + h(x_h) - f(x) - h(x) + S_f(x_f, x) + S_h(x_h, x)\right)\\
&\leq 2\gamma\lambda\left(\dotp{\tnabla f(x_f), x_f - x} + \dotp{ \nabla h(x_h), x_h - x} + \dotp{\tnabla \chi_V(x_h), x_h - x}\right) \\
&= 2\gamma \lambda\left(\dotp{\tnabla f(x_f) + \nabla h(x_h) + \tnabla \chi_V(x_h), x_f - x} + \dotp{ \nabla h(x_h) + \tnabla \chi_V(x_h), x_h - x_f}\right)  \\
&= 2 \dotp{z - z^+, x_f - x} +2 \dotp{\gamma \nabla h(x_h) + \gamma\tnabla \chi_V(x_h), z - z^+} \\
&= 2 \dotp{z - z^+, x_f + \gamma \tnabla \chi_V(x_h) - x} + 2\gamma\dotp{\nabla h(x_h), z - z^+} \\
&= 2 \dotp{z - z^+,\TFDRS z - x} + 2\gamma\dotp{\nabla h(x_h), z - z^+} \\
&= 2 \dotp{z - z^+,z - x} + \frac{2}{\lambda}\dotp{z - z^+, z^+ - z} + 2\gamma\dotp{\nabla h(x_h), z - z^+}\\
&\stackrel{\eqref{eq:cosinerule}}{=} \|z - x\|^2 - \|z^+ - x\|^2 + \left(1-\frac{2}{\lambda}\right) \| z^+ - z\|^2 + 2\gamma \dotp{ \nabla h(x_h), z - z^+}.\qquad \endproof
\end{align*}

\subsection{Proof of Proposition~\ref{prop:FDRSlower}}\label{app:prop:FDRSlower}
By~\eqref{eq:fsbound} and~\eqref{eq:hsbound} and because $\tnabla \chi_V(x^\ast) \in V^\perp$, we have
\begin{align*}
f(x_f) + h(x_h) - f(x^\ast) - g(x^\ast) &\geq \dotp{x_h - x^\ast, \tnabla f(x^\ast) + \nabla h(x^\ast) + \tnabla \chi_V(x^\ast)}  \\
&+ \dotp{x_f - x_h, \tnabla f(x^\ast)} + S_f(x_f, x^\ast) + S_h(x_h, x^\ast)\\
&=  \dotp{x_f - x_h, \tnabla f(x^\ast)} + S_f(x_f, x^\ast) + S_h(x_h, x^\ast). \qquad \endproof
\end{align*}

\subsection{Proof of Corollary~\ref{cor:strongconvexfundamental}}\label{app:cor:strongconvexfundamental} By~\eqref{eq:cosinerule}, we have $\|z - x^\ast\|^2 - \|z^+ - x^\ast\|^2 = \|z - z^\ast\|^2 - \|z ^+ - z^\ast\|^2 + 2\dotp{z - z^+, z^\ast - x^\ast}.$
Therefore, by Proposition~\ref{prop:FDRSupper},
\begin{align*}
2\gamma\lambda&\left(f(x_f) + h(x_h) - f(x^\ast) - h(x^\ast) + S_f(x_f, x^\ast) + S_h(x_h, x^\ast)\right) \\
&\leq \|z - z^\ast\|^2 - \|z^+ - z^\ast\|^2 + 2\dotp{z - z^+, z^\ast - x^\ast} \\
&+ \left(1-\frac{2}{\lambda}\right) \| z^+ - z\|^2 + 2\gamma \dotp{ \nabla h(x_h), z - z^+}. \numberthis \label{eq:FDRSupper2}
\end{align*}

Equation~\eqref{eq:strongconvexupper} now follows from~\eqref{eq:FDRSupper2} and~\eqref{eq:FDRSlower}:
\begin{align*}
&4\gamma \lambda(S_f(x_f, x^\ast) + S_h(x_h, x^\ast)) \stackrel{\eqref{eq:FDRSlower}}{\leq} - 2\gamma \lambda\dotp{x_f - x_h, \tnabla f(x^\ast)} \\
&+ 2\gamma\lambda(f(x_f) + h(x_h) - f(x^\ast) - h(x^\ast) + S_f(x_f, x^\ast) + S_h(x_h, x^\ast)) \\
&\stackrel{\eqref{eq:FDRSupper2}}\leq \|z - z^\ast\|^2 - \|z^{+} - z^\ast\|^2 + 2\dotp{z - z^{+}, z^\ast - x^\ast} - 2\gamma\lambda\dotp{x_f - x_h, \tnabla f(x^\ast)}\\
&+ \left(1-\frac{2}{\lambda}\right) \| z^{+} - z\|^2 + 2\gamma \dotp{ \nabla h(x_h), z - z^{+}} \\
&\stackrel{\eqref{eq:FPRidentity}}{=}\|z - z^\ast\|^2 - \|z^{+} - z^\ast\|^2 + \left(1-\frac{2}{\lambda}\right) \| z^{+} - z\|^2 + 2\gamma \dotp{ \nabla h(x_h) - \nabla h(x^\ast), z - z^{+}}.\qquad \endproof
\end{align*} 

\subsection{Proof of Theorem~\ref{thm:strongconvexity}}\label{app:thm:strongconvexity}

Let $\eta_k = 2/\lambda_k - 1$.  By~\eqref{eq:ergodicauxiliarybound}, we have
\begin{align}
2\gamma \dotp{ \nabla h(x_h^k) - \nabla h(x^\ast), z^k - z^{k+1}} &\leq \frac{\gamma^2}{\eta_k} \|\nabla h(x_h^k) - \nabla h(x^\ast)\|^2 + \eta_k\|z^k - z^{k+1}\|^2. \label{eq:crosstermbound}
\end{align}
Hence, for all $k \geq 0$, we have (using $1/\eta_k \leq \lambda_k/\varepsilon^2$ as in~\eqref{eq:ergodicauxiliarybound} and~\eqref{eq:lambdaboundwithepsilon})
\begin{align*}
&4\gamma \underline{\lambda}\sum_{i=0}^k (S_f(x_f^i, x^\ast) + S_h(x_h^i, x^\ast)) \leq \sum_{i=0}^k 4\gamma \lambda_i(S_f(x_f^i, x^\ast) + S_h(x_h^i, x^\ast)) \\
&\stackrel{\eqref{eq:strongconvexupper}}{\leq} \sum_{i=0}^k \biggl( \|z^i - z^\ast\|^2 - \|z^{i+1} - z^\ast\|^2 - \eta_i \| z^{i+1} - z^i\|^2 \\
&\quad\quad\quad\quad+ 2\gamma \dotp{ \nabla h(x_h^i) - \nabla h(x^\ast), z^i - z^{i+1}}\biggr) \\
&\stackrel{\eqref{eq:crosstermbound}}{\leq} \sum_{i=0}^k \left( \|z^i - z^\ast\|^2 - \|z^{i+1} - z^\ast\|^2 + (\gamma^2\lambda_i/\varepsilon^2)\|\nabla h(x_h^i) - \nabla h(x^\ast)\|^2\right)\\
&\stackrel{\eqref{eq:gradientsum}}{\leq} \|z^0 - z^\ast\|^2 - \|z^{k+1} - z^\ast\|^2 + \frac{(1+\varepsilon)\gamma}{\varepsilon^3(2\beta_V-\gamma)}\|z^0 - z^\ast\|^2.
\end{align*}
The ``best" convergence rates now follow by taking $k \rightarrow \infty$ and using \cite[Lemma 3]{davis2014convergence}.  In addition, we apply Jensen's inequality to $\|\cdot\|^2$ in the first term to get
\begin{align*}
\frac{\mu_f}{2}\|\overline{x}_f^k - x^\ast\|^2 + \frac{\mu_h}{2} \|\overline{x}_h^k - x^\ast\|^2  \cut{&\leq \frac{1}{\Lambda_k} \sum_{i=0}^k \lambda_i(S_f(x_f^i, x^\ast) + S_h(x_h^i, x^\ast)) \\}
&\leq \frac{\left(1 + \frac{(1+\varepsilon)\gamma}{\varepsilon^3(2\beta_V-\gamma)} \right)\|z^0 - z^\ast\|^2}{4\gamma\Lambda_k}.
\end{align*}

We now fix $k \geq 0$. For all $\lambda > 0$, define $z_{\lambda} := (\TFDRS)_\lambda(z^k)$.\cut{ Again, we note that $z_\lambda - z^\ast = \lambda(\TFDRS(z^k) - z^k)$ and, hence, we can bound the size of $z_{\lambda} - z^k$ using~\eqref{thm:FDRSfacts:part:convergenceFPR:eq}.} Observe that $S_f(x_f^k, x^\ast)$ and $S_h(x_h^k, x^\ast)$ do not depend on the value of $\lambda_k$.  Therefore, we use~\eqref{eq:strongconvexupper} to get
\begin{align*}
S_f(x_f^k, x^\ast) + S_h(x_h^k, x^\ast) &\leq \inf_{\lambda \in [0, 1/\alphaFDRSV)}\frac{1}{4\gamma\lambda} \biggl(2\gamma \dotp{\nabla h(x_h^k) - \nabla h(x^\ast), z^k - z_\lambda} \\
&+ \|z^k - z^\ast\|^2 - \|z_\lambda - z^\ast\|^2 + \left(1-\frac{2}{\lambda}\right) \| z_\lambda - z^k\|^2\biggl)  \\
&\stackrel{\eqref{eq:cosinerule}}{=} \inf_{\lambda \in [0, 1/\alphaFDRSV)}\frac{1}{4\gamma\lambda} \biggl(2\gamma \dotp{\nabla h(x_h^k) - \nabla h(x^\ast), z^k - z_\lambda} \\
&+ 2\dotp{z_\lambda - z^\ast, z^k - z_\lambda} + 2\left(1 - \frac{1}{\lambda}\right)\| z_\lambda - z^k\|^2\biggr)\\
&\leq  \frac{1}{4\gamma}\left(2\dotp{z_1 - z^\ast, z^k - z_1} + \frac{2\gamma}{\beta_V}\|z^k - z^\ast\| \|z_1 - z^k\|\right) \numberthis \label{eq:littleoupper2} \\
&\stackrel{\eqref{thm:FDRSfacts:part:convergenceFPR:eq}}{\leq} \frac{(1+ {\gamma}/{\beta_V})\|z^0 - z^\ast\|^2}{2\gamma\sqrt{\underline{\tau}(k+1)}}
\end{align*}
where~\eqref{eq:littleoupper2} uses the ($1/\beta_V$)-Lipschitz continuity of $\nabla h$ and the identity $\nabla h(x_h^k) - \nabla h(x^\ast) = \nabla h(z^k) - \nabla h(z^\ast)$, and the last line uses the Fej\'er property $\|z_1 - z^\ast\| \leq \|z^k - z^\ast\| \leq \|z^0 - z^\ast\|$ (see Part~\ref{thm:FDRSfacts:part:fejer} of Theorem~\ref{thm:FDRSfacts}). The $o(1/\sqrt{k+1})$ rates follow from~\eqref{eq:littleoupper2} and the corresponding rates for the FPR in~\eqref{thm:FDRSfacts:part:convergenceFPR:eq}.  \endproof

\subsection{Proof of Proposition~\ref{eq:newfundamentalinequality}}\label{app:eq:newfundamentalinequality}

Because $\nabla f$ is ($1/\beta_f$)-Lipschitz, we have 
\begin{align}
f(x_h) &\leq f(x_f) + \dotp{x_h - x_f, \nabla f(x_f)} + \frac{1}{2\beta_f}\|x_h - x_f\|^2; \label{eq:descenttheorem2}\\
S_f(x_f, x^\ast) &\stackrel{\eqref{eq:Sf}}{\geq} \frac{\beta_f}{2} \|\nabla f(x_f) - \nabla f(x^\ast)\|^2\label{eq:Slowerbound}.
\end{align} 
where the first inequality follows from~\cite[Theorem 18.15(iii)]{bauschke2011convex}. By applying the identity $z^\ast - x^\ast = \gamma \tnabla \chi_V(x^\ast) =- \gamma \nabla f(x^\ast) - \gamma\nabla h(x^\ast)$, the cosine rule~\eqref{eq:cosinerule}, and the identity $z - z^+ = \lambda(x_h - x_f)$ (see~\eqref{eq:FPRidentity}) multiple times, we have
\begin{align*}
&2\dotp{z - z^+, z^\ast - x^\ast} + 2\gamma \lambda\dotp{ x_h - x_f, \nabla f(x_f)} = 2\lambda\dotp{x_h - x_f, \gamma \tnabla \chi_V(x^\ast) + \gamma\nabla f(x_f)} \\
&= 2\lambda \dotp{\gamma \tnabla \chi_V(x_h) + \gamma \nabla h(x_h) + \gamma \nabla f(x_f) , \gamma \nabla f(x_f) - \gamma \nabla f(x^\ast)} - 2\dotp{z-z^+, \gamma \nabla h(x^\ast)} \\
&= \lambda \biggl(\|\gamma\nabla f(x_f) - \gamma \nabla f(x^\ast)\|^2 + \|x_h - x_f\|^2 \\
&- \gamma^2\|\tnabla \chi_V(x_h) + \nabla h(x_h) -  \tnabla \chi_V(x^\ast) -  \nabla h(x^\ast)\|^2\biggr) - 2\dotp{z-z^+, \gamma \nabla h(x^\ast)}. \numberthis \label{eq:preboundLipschitz}
\end{align*}
By~\eqref{eq:FPRidentity} (i.e., $z - z^+ = \lambda(x_h - x_f)$), we have
\begin{align*}
\left(1 - \frac{2}{\lambda}\right) \|z - z^+\|^2 + \lambda \left(\frac{\gamma}{\beta_f} + 1\right) \|x_h - x_f\|^2 &= \left(1 + \frac{\gamma - \beta_f}{\beta_f\lambda}\right)\|z-z^+\|^2.
\end{align*}
Therefore, 
\begin{align*}
&2\gamma \lambda (f(x_h) + h(x_h) - f(x^\ast) - h(x^\ast)) \\
&\stackrel{\eqref{eq:descenttheorem2}}{\leq} 2\gamma \lambda(f(x_f) + h(x_h) - f(x^\ast) - h(x^\ast)) +  2\gamma \lambda\dotp{ x_h - x_f, \nabla f(x_f)} + \frac{\gamma\lambda}{\beta_f}\|x_h - x_f\|^2 \\
&\stackrel{\eqref{eq:FDRSupper2}}{\leq} \|z - z^\ast\|^2 - \|z^+ - z^\ast\|^2 + 2\dotp{z - z^+, z^\ast - x^\ast} + 2\gamma \lambda\dotp{ x_h - x_f, \nabla f(x_f)}\\
&+ \left(1-\frac{2}{\lambda}\right) \| z^+ - z\|^2 + 2\gamma \dotp{ \nabla h(x_h), z - z^+} + \frac{\gamma\lambda}{\beta_f}\|x_h - x_f\|^2 - 2\gamma \lambda S_f(x_f, x^\ast)  \\
&\stackrel{\eqref{eq:preboundLipschitz}}{\leq} \|z - z^\ast\|^2 - \|z^+ - z^\ast\|^2 +\left(1 - \frac{2}{\lambda}\right) \|z - z^+\|^2 + \lambda \left(\frac{\gamma}{\beta_f} + 1\right) \|x_h - x_f\|^2 \\
&+ \lambda \|\gamma\nabla f(x_f) - \gamma\nabla f(x^\ast)\|^2 + 2\gamma \dotp{ \nabla h(x_h) - \nabla h(x^\ast), z - z^+}- 2\gamma \lambda S_f(x_f, x^\ast) \\
&\stackrel{\eqref{eq:Slowerbound}}{\leq}\|z - z^\ast\|^2 - \|z^+ - z^\ast\|^2 +   \left(1 + \frac{\gamma - \beta_f}{\beta_f\lambda}\right)\|z-z^+\|^2 \\
&+ 2\gamma \dotp{ \nabla h(x_h) - \nabla h(x^\ast), z - z^+} +  \gamma \lambda (\gamma - \beta_f)\|\nabla f(x_f) - \nabla f(x^\ast)\|^2. \numberthis\label{eq:lipschitzdiffbound}
\end{align*}
If $\gamma \leq \beta_f$, then we can drop the last term.  If $\gamma > \beta_f$, then use~\eqref{eq:strongconvexupper} to get
\begin{align*}
\gamma \lambda (\gamma - \beta_f)\|\nabla f(x_f) - \nabla f(x^\ast)\|^2 &\leq \frac{ \gamma - \beta_f}{2\beta_f}\biggr(2\gamma \dotp{ \nabla h(x_h) - \nabla h(x^\ast), z - z^{+}}\\
&+ \|z - z^\ast\|^2 - \|z^+ - z^\ast\|^2 + \left(1-\frac{2}{\lambda}\right) \| z^+ - z\|^2 \biggr)
\end{align*}
The result follows by~\eqref{eq:lipschitzdiffbound} and $$\left(1 + \frac{\gamma - \beta_f}{\beta_f\lambda}\right)\|z-z^+\|^2 + \frac{ \gamma - \beta_f}{2\beta_f} \left(1-\frac{2}{\lambda}\right) \| z - z^+\|^2 = \left(1 + \frac{\gamma - \beta_f}{2\beta_f}\right)\|z - z^+\|^2. \qquad \endproof$$

\subsection{Proof of Theorem~\ref{eq:nonlinearconvergence}}\label{app:eq:nonlinearconvergence}

For all $i \geq 0$, let $c_i := (i/(i+1))^{1/2}$. Let $\kappa_a := (1/2) + 2(a+1)^2$, and let $z^0 := \sqrt{2\alpha \kappa_a}e^{(1/(a+1))} \times \left( (\|z_i\|^{-1}/(i+1)^\alpha)z_i\right)_{i \geq 0}.$ Then $\|z^0\|^2 = 2\alpha\kappa_a e^{2/(a+1)}\sum_{i=0}^\infty (1/(i+1)^{2\alpha}) < \infty$ and, hence, $z^0 \in \cH$.  Now for all $i \geq 1$, we have
\begin{align*}
\frac{\|z_i\|^2 (a+c_i^2)^2}{c_i^2} \stackrel{\eqref{eq:eigenvectornorm}}{=} (1-c_i^2) + \frac{ (a+c_i^2)^2}{c_i^2} &\leq \kappa_a \numberthis\label{eq:kappa}
\end{align*}
because $c_i^2 \in [1/2, 1)$. In addition, for all $i \geq 1$, we have
\begin{align*}
\|(P_V)_iz_i^0\|^2 &= \frac{2\alpha\kappa_a e^{2/(a+1)}}{\|z_i\|^2(i+1)^{2\alpha}} \|(P_V)_i z_i\|^2  \stackrel{\eqref{eq:eigenvectornorm}}{=} \frac{2\alpha \kappa_a e^{2/(a+1)}c_i^2(1-c_i^2)}{\|z_i\|^2(a + c_i^2)^2(i+1)^{2\alpha}} \\
&= \frac{2\alpha\kappa_a e^{2/(a+1)}c_i^2}{\|z_i\|^2(a + c_i^2)^2(i+1)^{1+2\alpha}} \stackrel{\eqref{eq:kappa}}{\geq}  \frac{2\alpha e^{2/(a+1)}}{(i+1)^{1+2\alpha}}
\end{align*}
where the third equality follows because $1-c_i^2 = 1 - i/(i+1) = 1/(i+1)$.  

Now, for all $k \geq 0$, let $z^{k+1} := \TFDRS z^k$. Again, for all $i \geq 0$, let $b_i := (a + c_i^2)/(a + 1) = 1 - (i+1)^{-1}(a+1)^{-1}$ be the eigenvalue of $(\TFDRS)_i$ associated to $z_i$. Note that $b_i^{2k} \geq e^{-2/(1+a)}$ whenever $i \geq k \geq 0$ (hint: use the bound $e^{-1/(a+1)} \leq ( 1 - (i+1)^{-1}(a+1)^{-1})^{i} = b_{i}^i$, and note that $b_i^{2k}$ is increasing in $i$ for fixed $k$). Therefore, for all $k \geq 1$, we have 
\begin{align*}
\|x_h^k - x^\ast\|^2 &= \|P_V\TFDRS^k z^0 \|^2 = \sum_{i=0}^\infty b_i^{2k}\|(P_V)_iz_i^0\|^2 \geq \sum_{i=k}^\infty b_i^{2k}\frac{2\alpha e^{2/(a+1)}}{(i+1)^{1+2\alpha}} \\
&\geq\sum_{i=k}^\infty \frac{2\alpha}{(i+1)^{1+2\alpha}} \geq \frac{1}{(k+1)^{2\alpha}}. \numberthis \label{eq:slowboundforxh}
\end{align*}
where we use $x^\ast = 0$ and the lower integral approximation of the sum. 

Now we prove the bound for $(x_f^j)_{j \geq 0}$. For all $k \geq 0$,  $x_f^k = \TFDRS z^k - \gamma \tnabla \chi_V(x_h^k) = \TFDRS z^k - P_{V^\perp} z^k = (\TFDRS - P_{V^\perp})\TFDRS^k z^0$ (see~\eqref{lem:FDRSidentities}).  In addition, for all $i \geq 0$,
\begin{align*}
(\TFDRS - P_{V^\perp})_i = \frac{1}{(a + 1)}\begin{bmatrix} 0 & -\cos(\theta_i)\sin(\theta_i) \\ 0 & \cos^2(\theta_i) + a - (a+1) \end{bmatrix} = -\frac{\sin(\theta_i)}{(a+1)} \begin{bmatrix} 0 & \cos(\theta_i) \\ 0 & \sin(\theta_i) \end{bmatrix}.
\end{align*}
Thus, for all $i \geq 0$, we have 
\begin{align*}
\|(\TFDRS - P_{V^\perp})_i z_i^0\|^2 &= \frac{2\alpha\kappa_a e^{2/(a+1)}\sin^2(\theta_i)(\cos^2(\theta_i) + \sin^2(\theta_i))}{\|z_i\|^2(a+1)^2(i+1)^{2\alpha}} = \frac{2\alpha\kappa_a e^{2/(a + 1)}(1-c_i^2)}{\|z_i\|^2(a+1)^2(1+i)^{2\alpha}} \\
&\stackrel{\eqref{eq:kappa}}{\geq} \frac{2\alpha e^{2/(a+1)}(a+c_i^2)^2}{c_i^2(a+1)^2(1+i)^{1 + 2\alpha}}.
\end{align*}
where the last inequality follows because $1-c_i^2 = 1 - i/(i+1) = 1/(i+1)$ and $\kappa_a/\|z_i\|^2 \geq (a+c_i^2)^2/c_i^2$.  Note that for all $i \geq 1$, we have $(a+c_i^2)^2/c_i^2 \geq (a+1/2)^2$ because $c_i^2 \in [1/2, 1)$.  Therefore, for all $k \geq 1$, we have
\begin{align*}
\|x_f^k - x^\ast\|^2 = \|(\TFDRS - P_{V^\perp})\TFDRS^k z^0\|^2 
&\geq \sum_{i=k}^\infty b_i^{2k}\frac{2\alpha e^{2/(a+1)}(a+c_i^2)^2}{c_i^2(a+1)^2(1+i)^{1 + 2\alpha}} \\
&\geq \frac{(a+1/2)^2}{(a+1)^2(k+1)^{2\alpha}}
\end{align*}
where we use similar arguments to those used in~\eqref{eq:slowboundforxh}. \qquad \endproof


\bibliographystyle{siam}
\bibliography{099229Bibliography}


\end{document}